\numberwithin{equation}{section}
\definecolor{marin}{rgb}   {0.,   0.1,   0.5} 
\definecolor{rouge}{rgb}   {0.8,   0.,   0.} 
\definecolor{sepia}{rgb}   {0.4,   0.25,   0.} 
\definecolor{mag}{rgb}   {0.3,   0,   0.3} 
\newtheorem{theorem}{Theorem}
\newtheorem{lemma}{Lemma}[section]
\newtheorem{proposition}[lemma]{Proposition}
\newtheorem{corollary}[lemma]{Corollary}
\theoremstyle{definition}
\newtheorem{definition}[lemma]{Definition}
\newtheorem{remark}[lemma]{Remark}
\newcommand{\Z}{\mathbb{Z}}
\newcommand{\ii }{{\rm i} }
\newcommand{\s}{\sigma}
\newcommand{\pa}{\partial}
\newcommand{\T}{\mathbb{T}}
\newcommand{\R}{\mathbb{R}}
\newcommand{\eps}{\varepsilon}
\providecommand{\vect}[2]{{\bigl[\begin{smallmatrix}#1\\#2\end{smallmatrix}\bigr]}}   
\providecommand{\sm}[4]{{\bigl[\begin{smallmatrix}#1&#2\\#3&#4\end{smallmatrix}\bigr]}}
\DeclareMathOperator{\tiret}{-}
\begin{document}

\title[Long-time existence for beam equations on irrational tori]{{L}ong-time 
existence for semi-linear beam equations on  irrational tori.}

\author{Joackim Bernier }
\address{\scriptsize{Laboratoire de Math\'ematiques Jean Leray, Universit\'e de Nantes, UMR CNRS 6629\\
2, rue de la Houssini\`ere \\
44322 Nantes Cedex 03, France}}
\email{joackim.bernier@univ-nantes.fr}

\author{Roberto Feola}
\address{\scriptsize{Laboratoire de Math\'ematiques Jean Leray, Universit\'e de Nantes, UMR CNRS 6629\\
2, rue de la Houssini\`ere \\
44322 Nantes Cedex 03, France}}
\email{roberto.feola@univ-nantes.fr}

 \author{ Beno\^it Gr\'ebert}
\address{\scriptsize{Laboratoire de Math\'ematiques Jean Leray, Universit\'e de Nantes, UMR CNRS 6629\\
2, rue de la Houssini\`ere \\
44322 Nantes Cedex 03, France}}
\email{benoit.grebert@univ-nantes.fr}

\author{Felice Iandoli}
\address{\scriptsize{Laboratoire Jacques-Louis Lions, Sorbonne Universit\'e, UMR CNRS 7598\\
4, Place Jussieu\\
 75005 Paris Cedex 05, France}}
\email{felice.iandoli@sorbonne-universite.fr}

\keywords{Lifespan for semi-linear PDEs, Birkhoff normal forms,  modified energy, irrational torus}

\subjclass[2010]{35Q35, 35Q53, 37K55}

\thanks{Felice Iandoli has been supported  by ERC grant ANADEL 757996. Roberto Feola, Joackim Bernier and Benoit Gr\'ebert have been supported by 
 the Centre Henri Lebesgue ANR-11-LABX- 0020-01 
 and by ANR-15-CE40-0001-02 ``BEKAM'' of the ANR}

\begin{abstract} 
We consider the semi-linear beam 
equation  on the $d$ dimensional irrational torus 
with smooth nonlinearity of order $n-1$ 
with $n\geq3$ and $d\geq2$. 
 If $\eps\ll 1$ is the size of the initial datum, 
 we prove that the lifespan $T_\eps$ of solutions   
 is $O(\eps^{-A(n-2)^-})$ where 
 $A\equiv A(d,n)= 1+\frac 3{d-1}$ when $n$ is even  and 
 $A= 1+\frac 3{d-1}+\max(\frac{4-d}{d-1},0)$ when $n$ is odd. 
 For instance for $d=2$ and $n=3$ (quadratic nonlinearity) 
 we obtain $T_\eps=O(\eps^{-6^-})$, much better than  $O(\eps^{-1})$, 
 the  time given by the local existence theory. 
 The irrationality of the torus makes the set of differences 
 between two eigenvalues of $\sqrt{\Delta^2+1}$ accumulate to zero, 
 facilitating the exchange between the high Fourier modes  
 and complicating the control of the solutions over long times. 
 Our result is obtained by combining a Birkhoff normal form step 
 and a modified energy step.
\end{abstract}

\maketitle

\setcounter{tocdepth}{1} 
\tableofcontents

\section{Introduction}
In this article we consider  the beam equation on an irrational torus
 \begin{equation}\label{eq:beam}
\left\{ \begin{aligned}
& \partial_{tt}\psi+\Delta^{2} \psi+\psi+f(\psi)=0 \,,\\
&\psi(0,y)=\psi_0\,,\\
&\partial_{t}\psi(0,y)=\psi_1\,,
 \end{aligned}\right.
 \end{equation}
 where $f\in C^{\infty}(\R,\R)$, $\psi=\psi(t,y)$,  
 $y\in \mathbb{T}^{d}_{{\nu}}$, with ${\nu}=(\nu_1,\ldots,\nu_{d})\in [1,2]^{d}$  
 and 
 \begin{equation}\label{toriIrr}
 \mathbb{T}^{d}_{{\nu}}:=(\mathbb{R}/ 2\pi \nu_1\mathbb{Z})\times 
 \cdots\times(\mathbb{R}/ 2\pi \nu_d \mathbb{Z})\,.
\end{equation}
The initial data $(\psi_0, \psi_1)$ have small size $\eps$ in the standard Sobolev space
 $H^{s+1}(\mathbb{T}_\nu^{d})\times H^{s-1}(\mathbb{T}_\nu^{d})$ for some $s\gg1$.
 The nonlinearity $f(\psi)$ has the form
 \begin{equation}\label{hypFF}
 f(\psi):=(\partial_{\psi}F)(\psi)
 \end{equation}
 for some 
  smooth function $F\in C^{\infty}(\R,\R)$ having 
 a  zero of order at least $n\geq3$ at the origin. 
 Local existence theory implies that \eqref{eq:beam}
 admits, for small $\eps>0$, a unique smooth solution 
 defined on an interval of length $O(\eps^{-n+2})$. 
 Our goal is to prove that, generically with respect 
 to the irrationality of the torus (i.e. generically with respect to the parameter $\nu$), 
 the solution actually extends to a larger interval.
 
 \noindent
Our main theorem  is the following.
\begin{theorem}\label{thm-main} Let $d\geq2$.
There exists $s_0\equiv s_0(n,d)\in\R$ such that for almost all 
$\nu\in[1,{2}]^{d}$, for any $\delta>0$
and for any $s\geq s_0$ there exists
 $\eps_0>0$ such that for any $0<\eps\leq \eps_0$
we have the following. 
For any initial data 
$(\psi_0,\psi_1)\in H^{s+1}(\mathbb{T}_{\nu}^{d})\times H^{s-1}(\mathbb{T}_{\nu}^{d})$
such that
\begin{equation}\label{initialstima}
\|\psi_0\|_{H^{s+1}}+\|\psi_1\|_{H^{s-1}}\leq \eps\,,
\end{equation}
there exists a unique solution 
of the Cauchy problem \eqref{eq:beam} such that 
\begin{equation}\label{tesiKG}
\begin{aligned}
&\psi(t,x)\in C^0\big([0,T_\eps);H^{s+1}(\T_\nu^d)\big)\bigcap 
C^1\big([0,T_\eps);H^{s-1}(\T_\nu^d)\big)\,, \\
& \sup_{t\in[0,T_\eps)}\Big(\|\psi(t,\cdot)\|_{H^{s+1}} +
\|\partial_t \psi(t,\cdot)\|_{H^{s-1}}    \Big)\leq 2\eps\,, 
\qquad T_\eps\geq  \eps^{-\mathtt{a}+\delta}\,,
\end{aligned}
\end{equation}
where $\mathtt{a}=\mathtt{a}(d,n)$ has the form
\begin{equation}\label{Atime}
\mathtt{a}(d,n):=
\left\{\begin{aligned}
&(n-2)\big(1+\tfrac{3}{d-1}\big)\,,\qquad\qquad\qquad\quad    
n\;\; {\rm even}\\
&(n-2)\big(1+\tfrac{3}{d-1}\big)
+\tfrac{\max\{4-d,0\}}{d-1}\,,\;\;\;\;\;
n\;\; {\rm odd}\,.
\end{aligned}\right.
\end{equation}
\end{theorem}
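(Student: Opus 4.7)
The strategy combines a Birkhoff normal form step with a modified energy argument, as announced in the abstract. First, I would recast \eqref{eq:beam} as a first-order complex Hamiltonian system: with $\Lambda := \sqrt{\Delta^2+1}$ and $u := \tfrac{1}{\sqrt 2}(\Lambda^{1/2}\psi + \ii \Lambda^{-1/2}\partial_t\psi)$, the equation becomes $\partial_t u = -\ii\Lambda u + X(u,\bar u)$ with Hamiltonian $H = \sum_{j\in\Z^d}\lambda_j|u_j|^2 + \sum_{k\geq n}P_k(u,\bar u)$, where $\lambda_j = \sqrt{|j|_\nu^4+1}$, $|j|_\nu^2 = \sum_i \nu_i^{-2}j_i^2$, and $P_k$ is a real Hamiltonian of homogeneity $k$. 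Controlling $\|\psi\|_{H^{s+1}} + \|\partial_t\psi\|_{H^{s-1}}$ reduces to controlling $\|u\|_{H^s}$, so the theorem amounts to showing $\|u(t)\|_{H^s}\leq 2\eps$ for $t\leq \eps^{-\mathtt{a}+\delta}$.

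The arithmetic core is a small-divisor estimate: for almost every $\nu\in[1,2]^d$ there exist $\gamma(\nu)>0$ and $\tau=\tau(r,d)$ such that every non-trivial sum $\sum_{k=1}^r \sigma_k\lambda_{j_k}$ with $\sigma_k\in\{\pm 1\}$ (meaning one not forced to vanish by a pairing of indices) satisfies $|\sum_k \sigma_k\lambda_{j_k}|\geq \gamma(1+\max_k|j_k|)^{-\tau}$. I would prove this via Borel--Cantelli on the parameter $\nu$, exploiting analyticity and non-degeneracy of $\nu\mapsto\lambda_j(\nu)$. The accumulation of $\lambda_i-\lambda_j$ near zero, unavoidable on an irrational torus, forces $\tau$ to grow as $d$ decreases, and the precise $d$-dependence of $\tau$ is ultimately responsible for the factor $3/(d-1)$ appearing in \eqref{Atime}.

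With the divisors in hand, I would carry out a single Birkhoff normal form transformation $\Phi$, symplectic and close to the identity on $H^s$, reducing $H\circ\Phi$ to $\sum_j\lambda_j|u_j|^2 + Z + R$, where $Z$ collects only the resonant homogeneous components of degrees $n\leq k\leq N$ and $R$ has order $\geq N+1$, with $N$ to be optimized. I would then introduce a modified energy $\mathcal{E}_s(u) := \|u\|_{H^s}^2 + \mathcal{C}(u)$, where the corrector $\mathcal{C}$ is tailored so that the non-action-preserving pieces of $Z$ cancel from $\tfrac{d}{dt}\mathcal{E}_s$; the purely paired monomials $\prod_k|u_{j_k}|^2$ inside $Z$ Poisson-commute with $\|u\|_{H^s}^2$ and so contribute nothing for free. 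Balancing the small-divisor loss $\tau(N,d)$ against the high-order gain from $R$, and choosing the optimal ultraviolet cut-off, would yield a bound $|\tfrac{d}{dt}\mathcal{E}_s(u)|\lesssim \|u\|_{H^s}^{\mathtt{a}+2}$, after which a standard bootstrap closes the estimate on $[0,\eps^{-\mathtt a+\delta}]$.

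The main obstacles will be twofold. First, extracting the sharp exponent $\tau(r,d)$ in the Diophantine condition is delicate since near-resonances on the irrational torus accumulate densely; getting the scaling right governs the whole result. Second, in the odd-$n$, low-dimensional regime, a Hamiltonian monomial of odd total degree has, by reality, an unequal number of $u$ and $\bar u$ factors and hence admits no pure pairing; the unavoidable residual terms must be handled either by a supplementary normal form manoeuvre or by a further modification of $\mathcal{E}_s$, at the cost of the extra power $\max(4-d,0)/(d-1)$ recorded in \eqref{Atime}.
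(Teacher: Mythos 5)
Your proposal has the right architecture (complex Hamiltonian reduction, a measure-theoretic small-divisor estimate, BNF plus modified energy, bootstrap), but there is a genuine gap at the small-divisor step, and that gap propagates to make the proposed BNF step unworkable.

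You announce a lower bound of the form $\bigl|\sum_k \sigma_k\lambda_{j_k}\bigr| \geq \gamma\,(1+\max_k|j_k|)^{-\tau}$, i.e.\ a loss in the \emph{largest} frequency $\mu_1(j)$. For a Borel--Cantelli argument over $\nu\in[1,2]^d$ to converge over all $(j_1,\dots,j_r)\in(\Z^d)^r$ one needs $\tau$ large (at least of order $rd$). But then the generating functions $\chi_k$ from the homological equation would have coefficients of size roughly $\mu_1(j)^{\tau-2}$ (the $-2$ is the regularizing gain from the beam nonlinearity), which is unbounded: the vector field $X_{\chi_k}$ would not map $H^s$ into $H^s$, and there is no Birkhoff transformation at all. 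This is precisely the obstruction the paper has to navigate. What it actually proves (Proposition~\ref{mainNR}) is a dichotomy keyed to the sign $\sigma_1\sigma_2$ of the two largest indices: when $\sigma_1\sigma_2=+1$ the divisor is bounded below in terms of $\mu_3(j)$ \emph{only}, with no loss in $\mu_1$; when $\sigma_1\sigma_2=-1$ the loss in $\mu_1$ is only $(d-1+\delta)$, with the remaining loss pushed onto $\mu_3,\ldots,\mu_r$. A $\mu_3$-loss is harmless for the flow (as in Bambusi--Gr\'ebert), while a $(d-1)$-loss in $\mu_1$ is exactly what the $\mu_1^{-2}$ decay of the coefficients can absorb when $d\leq 3$. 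Without this dichotomy the whole BNF layer collapses.

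Relatedly, your plan to normalize \emph{all} degrees $n\leq k\leq N$ in a single Birkhoff transformation cannot succeed here. Because each BNF step costs $(d-1)$ derivatives on $\mu_1$ and the initial coefficients gain only $2$, the budget runs out: the paper can perform exactly two BNF steps when $d=2$ (normalizing degrees $n,\ldots,M_{2,n}-1$), one when $d=3$, and none at all when $d\geq 4$; beyond that the generating function would be genuinely unbounded. The degree cutoff must therefore be finite and $d$-dependent, and the bulk of the gain comes from the modified-energy step applied to the non-normalizable degrees, exploiting the commutator gain $|\langle j_1\rangle^{2s}-\langle j_2\rangle^{2s}|\lesssim_s \mu_1^{2s-1}\mu_3$ on the $\sigma_1\sigma_2=-1$ monomials (the $\sigma_1\sigma_2=+1$ ones being handled by the $\mu_3$-only small-divisor bound). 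Your description of the corrector $\mathcal{C}$ as cancelling "the non-action-preserving pieces of $Z$" misses this: after BNF, $Z$ is resonant and already Poisson-commutes with $N_s$; what must be killed are the non-resonant terms of too high degree to normalize. Finally, the paper works with two independent frequency cutoffs $N$ (for BNF) and $N_1$ (for the modified energy), and the exponent $\mathtt{a}(d,n)$, including the odd-$n$ correction $\max(4-d,0)/(d-1)$, emerges from balancing several competing powers of $\eps$, $N$, $N_1$; the single cutoff "balance $\tau(N,d)$ against the high-order gain" you sketch is too coarse to reproduce this.
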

Originally, the beam equation has been introduced  
in physics to model the oscillations of a uniform beam, 
so in a one dimensional context. 
In dimension $2$, similar equations can be used to model 
the motion of a clamped plate (see for instance the introduction of \cite{Pau}). 
In larger dimension ($d\geq3$) we do not claim 
that the beam equation \eqref{eq:beam} has a physical 
interpretation but nevertheless remains an interesting mathematical 
model of dispersive PDE.
We note that when the equation is posed on a torus, 
there is no physical reason to assume the torus to be rational.

This problem of extending solutions of semi-linear PDEs 
beyond the time given by local existence theory 
has been considered many times in the past, 
starting with Bourgain \cite{Bou96}, Bambusi \cite{Bam03} 
and Bambusi-Gr\'ebert \cite{BG} in which the authors prove 
the almost global existence for the Klein Gordon equation:
\begin{equation}\label{eq:KG}
\left\{ \begin{aligned}
& \partial_{tt}\psi-\Delta \psi+m\psi+f(\psi)=0 \,,\\
&\psi(0,x)=\psi_0\,,\\
&\partial_{t}\psi(0,x)=\psi_1\,,
 \end{aligned}\right.
 \end{equation}
on a one dimensional torus. Precisely, they proved that, given $N\geq1$, 
if the initial datum has a size $\eps$ small enough in 
$H^{s}(\mathbb{T})\times H^{s-1}(\mathbb{T})$, 
and if the mass stays outside an exceptional subset of zero measure, 
the solution of \eqref{eq:KG} exists at least on an interval of length 
$O(\eps^{-N})$. This result has been extended 
to equation \eqref{eq:KG} on Zoll manifolds (in particular spheres) 
by Bambusi-Delort-Gr\'ebert-Szeftel \cite{BDGS} but also 
for the nonlinear Schr\"odinger equation posed on $\mathbb{T}^{d}$ 
(the square torus of dimension d) \cite{BG,FG} or on 
$\R^d$ with a harmonic potential \cite{GIP}. 
What all these examples have in common 
is that the spectrum of the linear part of the equation 
can be divided into clusters that are well separated from each other. 
Actually if you considered \eqref{eq:beam} with a generic mass $m$  on the square torus $\mathbb T^d$ 
then the spectrum of $\sqrt{\Delta^2+m}$ 
(the square root comes from the fact that the equation is of order two in time) 
is given by $\{\sqrt{|j|^4+m}\mid j\in\Z^d\}$ 
which can be divided in clusters around each integers $n$  
whose diameter decreases with $|n|$. 
Thus for $n$ large enough these clusters are separated by $1/2$. 
So in this case also we could easily prove, following \cite{BG}, 
the almost global existence of the solution. 

\noindent
On the contrary when the equation is posed on an irrational torus,  
the nature of the spectrum  drastically changes: 
the differences between couples of eigenvalues 
accumulate to zero.
Even for the Klein Gordon equation \eqref{eq:KG} 
posed on $\mathbb T^d$ for $d\geq2$ the linear spectrum is not well separated. 
In both cases we could expect exchange of energy between high Fourier 
modes and thus the almost global existence 
in the sense described above is not reachable (at least up to now!). 
Nevertheless it is possible to go beyond the time given 
by the local existence theory. 
In the case of \eqref{eq:KG} on $\mathbb T^d$ for $d\geq2$, 
this local time has been extended by Delort \cite{Delort-Tori} and 
then improved in different ways by  Fang and Zhang \cite{fang}, 
Zhang \cite{Zhang} and Feola-Gr\'ebert-Iandoli \cite{FGI20} 
(in this last case a quasi linear Klein Gordon equation is considered). 
We quote also the remarkable work on multidimensional periodic water wave
by Ionescu-Pusateri \cite{IPtori}.

The beam equation has already been considered 
on irrational torus in dimension $2$ by R. Imekraz in \cite{Im}. 
In the case he considered, the irrationality parameter $\nu$ 
was diophantine and fixed, but a mass $m$  was added in the game 
(for us $m$ is fixed and for convenience we chose $m=1$). 
For almost all mass, Imekraz obtained a lifespan 
$T_\eps=O(\eps^{-\frac54(n-2)^+})$  
while we obtain, for almost all $\nu$, 
$T_\eps=O(\eps^{-4(n-2)^+})$ when 
$n$ is even and $T_\eps=O(\eps^{-4(n-2)-2^+})$ when $n$ is odd.

We notice that applying the Theorem 3 of \cite{BFG} (and its Corollary 1) we obtain the almost global existence for \eqref{eq:beam} on irrational tori up to a large but finite loss of derivatives.

Let us also mention some recent results about the 
longtime existence for  periodic  water waves \cite{BD,BFP,BFF1,BFP1}. 
In the same spirit we quote the long time existence 
for a general class of quasi-linear \emph{Hamiltonian} equations 
 \cite{FI20} and quasi-linear \emph{reversible} Schr\"odinger equations
 \cite{Feola-Iandoli-Long}  on the circle.
The main theorem in \cite{FI20} applies also for quasi-linear 
perturbations of the beam equation.
We mention also \cite{DI},  here the authors study 
the lifespan of small solutions of the  
semi-linear Klein-Gordon equation posed  
on a general compact boundary-less Riemannian manifold.

All previous results (\cite{Delort-Tori,fang,Zhang,FGI20,Im})
have been  obtained by a modified energy procedure. 
Such procedure partially destroys  the algebraic 
structure 
of the equation and, thus, it makes more involved
to iterate the procedure\footnote{Actually there are papers in which such 
procedure is iterated. We quote for instance \cite{Delort-sphere} 
and reference therein.}.
On the contrary, in this paper, we begin by a Birkhoff normal form procedure 
(when $d=2,3$) before applying a modified energy step. 
Further in dimension $2$ we can iterate two steps of 
Birkhoff normal form and  therefore we get a much better time. 
The other key tool that allows us to go further in time is 
an estimate of small divisors that we have tried to optimize to the maximum: 
essentially small divisors  make us lose $(d-1)$ derivatives  
(see Proposition \ref{mainNR}) which explains the strong dependence 
of our result on the dimension $d$ of the torus and also explains why we obtain a better result than \cite{Im}.  
In section \ref{scheme} we detail the scheme 
of the proof of Theorem \ref{thm-main}.


\subsection{Hamiltonian formalism}
We denote by $H^{s}(\mathbb{T}^{d};\mathbb{C})$
the usual Sobolev space of functions 
$\mathbb{T}^{d}\ni x \mapsto u(x)\in \mathbb{C}$.
We expand a function $ u(x) $, $x\in \mathbb{T}^{d}$, 
 in Fourier series as 
\begin{equation}\label{complex-uU}
u(x) = \frac{1}{({2\pi})^{d/2}}
\sum_{n \in \Z^{d} } {u}_ne^{\ii n\cdot x } \, , \qquad 
{u}_n := \frac{1}{(2\pi)^{d/2}} 
\int_{\mathbb{T}^{d}} u(x) e^{-\ii n\cdot x } \, dx \, .
\end{equation}
We also use the notation
\begin{equation}\label{notaFou}
u_n^{+1} := u_n \qquad  {\rm and} 
\qquad  u_n^{-1} := \overline{u_n}  \, . 
\end{equation}
We set $\langle j \rangle:=\sqrt{1+|j|^{2}}$ for $j\in \mathbb{Z}^{d}$.
We endow $H^{s}(\mathbb{T}^{d};\mathbb{C})$ with the norm 
\begin{equation}\label{Sobnorm}
\|u(\cdot)\|_{H^{s}}^{2}:=\sum_{j\in \mathbb{Z}^{d}}\langle j\rangle^{2s}| u_{j}|^{2}\,.
\end{equation}

Moreover, for $r\in\R^{+}$, we 
denote by $B_{r}(H^{s}(\mathbb{T}^{d};\mathbb{C}))$
the ball of $H^{s}(\mathbb{T}^{d};\mathbb{C}))$ 
with radius $r$
centered at the origin.
We shall also write the norm in \eqref{Sobnorm} as
$\|u\|^{2}_{H^{s}}=
(\langle D\rangle^{s}u,\langle D\rangle^{s} u)_{L^{2}}$, 
where $\langle D\rangle e^{\ii j\cdot x}=\langle j\rangle  e^{\ii j\cdot x}$,
for any $j\in\mathbb{Z}^{d}$.

In the following it will be more convenient to rescale the equation \eqref{eq:beam}
and work on squared tori $\mathbb{T}^{d}$.
For any $y\in \mathbb{T}_\nu^{d}$ we write $\psi(y)=\phi(x)$
with $y=(x_1\nu_1,\ldots, x_d\nu_d)$ and $x=(x_1,\ldots,x_d)\in \mathbb{T}^{d}$.
The beam equation in \eqref{eq:beam} reads
\begin{equation}\label{eq:beamtori}
\pa_{tt}\phi+\Omega^{2}\phi+f(\phi)=0
\end{equation}
where $\Omega$ 
is the Fourier multiplier defined by linearity as
\begin{equation}\label{omegoneBeam}
\Omega e^{\ii j\cdot x}=\omega_{j} e^{\ii j\cdot x}\,,
\qquad 
\omega_{j}:=\sqrt{|j|_a^{4}+1}\,,
\quad
|j|_{a}^{2}:=\sum_{i=1}^{d}a_{i}|j_i|^{2}\,,
\;\;\;
a_{i}:=\nu_i^{2}\,,
\;\;\;\forall \,j\in \mathbb{Z}^{d}\,.
\end{equation}
Introducing the variable $v=\dot{\phi}=\pa_{t}\phi$ we can 
rewrite equation \eqref{eq:beamtori} as 
\begin{equation}\label{eq:beam2}
\dot{\phi}=-v\,,\qquad \dot{v}=\Omega^{2}\phi+f(\phi)\,.
\end{equation}
By \eqref{hypFF} we note 
that \eqref{eq:beam2} can be written in the Hamiltonian form
\[
\pa_{t}\vect{\phi}{v}=X_{H_{\mathbb{R}}}(\phi,v)=J\left(
\begin{matrix}
\pa_{\phi}H_{\mathbb{R}}(\phi,v)\\
\pa_{v}H_{\mathbb{R}}(\phi,v)
\end{matrix}\right)\,,
\quad J=\sm{0}{1}{-1}{0}
\]
where $\pa$ denotes the $L^{2}$-gradient of the
Hamiltonian function
\begin{equation}\label{BeamRealHam}
 H_{\mathbb{R}}(\phi,v)=
\int_{\mathbb{T}^{d}}\big(\frac{1}{2}v^{2}+\frac{1}{2}(\Omega^{2}\phi) \phi
+F(\phi)\big)dx\,,
\end{equation}
on the phase space 
$H^{2}(\mathbb{T}^{d};\mathbb{R})\times L^{2}(\mathbb{T}^{d};\mathbb{R})$.
Indeed we have 
\begin{equation}\label{eq:1.14bis}
\mathrm{d}H_{\mathbb{R}}(\phi,v)\vect{\hat{\phi}}{\hat{v}}=-
\lambda_{\mathbb{R}}(X_{H_{\mathbb{R}}}(\phi,v),
\vect{\hat{\phi}}{\hat{v}})
\end{equation}
for any $(\phi,v), (\hat{\phi},\hat{v})$ in 
$H^{2}(\mathbb{T}^{d};\mathbb{R})\times L^{2}(\mathbb{T}^{d};\mathbb{R})$, 
where $\lambda_{\mathbb{R}}$ 
is the non-degenerate symplectic form
\[
\lambda_{\mathbb{R}}(W_1,W_2):=\int_{\mathbb{T}^{d}}(\phi_1v_2-v_1\phi_2)dx\,,
\qquad W_1:=\vect{\phi_1}{v_1}\,, W_2:=\vect{\phi_2}{v_2}\,.
\]
The Poisson bracket between two Hamiltonian 
$H_{\mathbb{R}}, G_{\mathbb{R}}: 
H^{2}(\mathbb{T}^{d};\mathbb{R})\times L^{2}(\mathbb{T}^{d};\mathbb{R})\to \mathbb{R}$
are defined as 
\begin{equation}\label{realpoisson}
\{H_{\mathbb{R}},G_{\mathbb{R}}\}
=\lambda_{\mathbb{R}}(X_{H_{\mathbb{R}}},X_{G_{\mathbb{R}}})\,.
\end{equation}

\noindent
We define the complex variables
\begin{equation}\label{beam5}
\vect{u}{\bar{u}}:=\mathcal{C}\vect{\phi}{v}\,,\quad \mathcal{C}:=
\frac{1}{\sqrt{2}}\left(
\begin{matrix}
\Omega^{\frac{1}{2}} & \ii \Omega^{-\frac{1}{2}}\\
\Omega^{\frac{1}{2}} & -\ii \Omega^{-\frac{1}{2}}
\end{matrix}
\right)\,,
\end{equation}
where $\Omega$ is the Fourier multiplier defined in \eqref{omegoneBeam}.
Then the system \eqref{eq:beam2} reads
\begin{equation}\label{eq:beamComp}
\dot{u} =\ii \Omega u+ \frac{\ii }{\sqrt 2}
\Omega^{-1/2}f\left(\Omega^{-1/2}\left(\frac{u+
\bar u}{\sqrt 2}\right)\right)\,.
  \end{equation}
Notice that \eqref{eq:beamComp} can be written in the Hamiltonian form
\begin{equation}\label{grad-ham}
\pa_{t}\vect{u}{\bar{u}}=X_{H}(u)=\ii J \left(\begin{matrix} \pa_{{u}}H(u)\\
 \pa_{\bar{u}}H(u)\end{matrix}\right)=
\left(\begin{matrix}\ii \pa_{\bar{u}}H(u)\\
-\ii \pa_{{u}}H(u)\end{matrix}\right)\,,\quad J=\sm{0}{1}{-1}{0}
\end{equation}
 with Hamiltonian function (see \eqref{BeamRealHam})
 \begin{equation}\label{beamHam}
 H(u)=H_{\mathbb{R}}(\mathcal{C}^{-1}\vect{u}{\bar{u}})
 =\int_{\mathbb{T}^{d}}\bar{u}\Omega u\ \mathrm{d}x 
 +\int_{\mathbb{T}^{d}} F\Big( \frac{\Omega^{-1/2}(u+\bar{u})}{\sqrt{2}}\Big)\ \mathrm{d}x
 \end{equation}
 and where $\pa_{\bar{u}}=(\pa_{\Re u}+\ii \pa_{\Im u})/2$,
 $\pa_{u}=(\pa_{\Re u}-\ii \pa_{\Im u})/2$. 
 Notice that
 \begin{equation}\label{complexVecH}
 X_{H}=\mathcal{C}\circ X_{H_{\mathbb{R}}}\circ\mathcal{C}^{-1}
 \end{equation}
 and that (using \eqref{beam5})
 \begin{equation}\label{diffHam}
\begin{aligned}
 \mathrm{d}H(u)\vect{h}{\bar{h}}=(\mathrm{d}H_{\mathbb{R}})(\phi,v)[\mathcal{C}^{-1}\vect{h}{\bar{h}} ]
 \stackrel{\eqref{eq:1.14bis},\eqref{complexVecH}}{=}
-\lambda(X_{H}(u),\vect{h}{\bar{h}})
\end{aligned}
 \end{equation}
 for any $h\in H^{2}(\mathbb{T}^{d};\mathbb{C})$
 and where the two form $\lambda$ is given by the push-forward 
$\lambda=\lambda_{\mathbb{R}}\circ\mathcal{C}^{-1}$.
 In complex variables the Poisson bracket in \eqref{realpoisson}
 reads
\begin{equation}\label{Poissonbrackets}
\begin{aligned}
\{H,G\}
&:=\lambda(X_{H},X_{G})
=\ii \int_{\mathbb{T}^{d}}\pa_{u}G\pa_{\bar{u}}H-
\pa_{\bar{u}}G\pa_{u}H \mathrm{d}x\,,
\end{aligned}
\end{equation}
where we set $H=H_{\mathbb{R}}\circ\mathcal{C}^{-1}$,  
$G=G_{\mathbb{R}}\circ\mathcal{C}^{-1}$.
Let us introduce an additional notation:
\begin{definition} If $j \in (\mathbb{Z}^d)^r$ for some 
$r\geq k$ then $\mu_k(j)$ denotes the 
$k^{st}$ largest number among $|j_1|, \dots, |j_r|$ 
(multiplicities being taken into account). 
If there is no ambiguity we denote it only with $\mu_k$.
\end{definition}
Let $r\in \mathbb{N}$, $r\geq n$.
A Taylor expansion of the Hamiltonian $H$  in \eqref{beamHam} leads to
\begin{equation}\label{expinitial}
H=Z_2+\sum_{k=n}^{r-1} H_k +R_r
\end{equation}
where 
\begin{equation}\label{quadratHam}
Z_{2}:=\int_{\mathbb{T}^{d}} \bar{u}\Omega u\ \mathrm{d}x
\stackrel{(\ref{omegoneBeam})}{=}
\sum_{j\in \mathbb{Z}^{d}}\omega_{j}|u_{j}|^{2}
\end{equation}
and 
$H_k$, $k=3,\cdots,r-1$, is an homogeneous polynomial of order $k$ of the form
\begin{equation}\label{H8}
H_k =
\sum_{\substack{\sigma\in\{-1,1\}^k,\ j\in(\Z^d)^k\\ 
\sum_{i=1}^k\sigma_i j_i=0}}(H_{k})_{\sigma,j}
u_{j_1}^{\sigma_1}\cdots u_{j_k}^{\sigma_k}\end{equation}
with 
(noticing that the zero momentum condition 
$\sum_{i=1}^k\sigma_i j_i=0$ implies $\mu_1(j)\lesssim \mu_2(j)$)
\begin{equation}\label{initCoeff}
|(H_{k})_{\sigma,j}|\lesssim_k \frac1{\mu_1(j)^{2}}\,,
\quad \forall \sigma\in\{-1,1\}^k,\ j\in(\Z^d)^k
\end{equation}
and
\begin{equation}\label{R8}
\| X_{R_r}(u)\|_{H^{s+2}}\lesssim_s \| u\|_{H^s}^{r-1}\,,
\qquad
\forall\, u\in B_{1}(
H^{s}(\mathbb{T}^{d};\mathbb{C})) \,.
\end{equation}
The estimate above follows by Moser's composition theorem in \cite{Mos66},
section 2.
Estimates \eqref{initCoeff} and \eqref{R8} 
express the regularizing effect of the 
semi-linear nonlinearity in the Hamiltonian writing of \eqref{eq:beamtori}.

\subsection{Scheme of the proof of Theorem \ref{thm-main}}\label{scheme}
 As usual Theorem  \ref{thm-main} will be proved 
 by a bootstrap argument and thus 
we want to control, 
$N_s(u(t)):=\|u(t)\|^2_{H^s}$, for $t\mapsto u(t,\cdot)$ 
a small solution 
(whose local existence is given by the standard theory for semi-linear PDEs) 
of the Hamiltonian system generated by $H$ given by \eqref{expinitial} 
for the longest time possible 
(and at least longer than the existence time given by the local theory). 
So we want to control its derivative with respect to $t$. 
We have
\begin{equation}\label{?}
\frac{d}{dt}N_s(u)=\{N_s,H\}=\sum_{k=n}^{r-1}\{N_s, H_k\} +\{N_s,R_r\}\,.
\end{equation}
By \eqref{R8} we have $\{N_s,R_r\}\lesssim \|u\|^{r-1}_{H^s}$ 
and thus we can neglect this term choosing $r$ large enough. 
Then we define  $H^{\leq N}_k$ the truncation of $H_k$ at order $N$:
\begin{equation*}\label{H8bis}
H_k^{\leq N} =
\sum_{\substack{\sigma\in\{-1,1\}^k,\ j\in(\Z^d)^k\\ 
\sum_{i=1}^k\sigma_i j_i=0,\ \mu_2(j)\leq N}}
(H_{k})_{\sigma,j}u_{j_1}^{\sigma_1}\cdots u_{j_k}^{\sigma_k}
\end{equation*}
and we set $H^{> N}_k=H_k-H^{\leq N}_k$.
As a consequence of  \eqref{initCoeff}
we have 
$\{N_s,H_k^{>N}\}\lesssim N^{-2}\|u\|^{k-1}_{H^s}$ 
and thus we can neglect these terms choosing $N$ large enough. 
So it remains to take care of 
$\sum_{k=n}^{r-1}\{N_s, H_k^{\leq N}\}$.

\noindent
The natural idea to eliminate $H_k^{\leq N}$ consists in using a 
Birkhoff normal form procedure (see \cite{BG,G}). 
In order to do that,  we have first to solve the homological equation
\[
\{\chi_k,Z_2\}+H_k^{\leq N}=Z_k\,.
\] 
This is achieved in Lemma \ref{lem:homoeq} and, 
thanks to the control of the small divisors given by Proposition \ref{mainNR}, 
we get that there exists $\alpha\equiv \alpha(d,k)>0$ 
such that for any $\delta>0$
\begin{equation}\label{chiCoeff}
|(\chi_{k})_{\sigma,j}|
\lesssim_\delta {\mu_1(j)^{d-3+\delta}}\mu_3(j)^\alpha\,,
\quad 
\forall \sigma\in\{-1,1\}^k,\ j\in(\Z^d)^k\,.
\end{equation}
From \cite{BG} we learn that the positive power of 
$\mu_3(j)$ appearing in the right hand side of \eqref{chiCoeff} 
is not dangerous\footnote{When you have a control of the small 
divisors involving only $\mu_3(j)$ then you can solve the homological 
equation at any order and you obtain an almost 
global existence result in the spirit of \cite{BG}. 
This would be the case if we consider the semi-linear 
beam equation on the squared torus $\T^d$.} 
(taking $s$ large enough) but the positive power of 
$\mu_1(j)$ implies a loss of derivatives. 
So this step can be achieved only assuming $d\leq 3$ 
and in that case the corresponding flow is well defined in 
$H^s$ (with $s$ large enough) and is controlled by 
$N^{\delta}$ (see Lemma \ref{lem:flowmap}). 
In other words, this step is performed only when $d=2,3$, 
when $d\geq 4$ we directly go to the modified energy step. 

\noindent
For $d=2,3$, let us focus on $n=3$. 
After this Birkhoff normal form step, we are left  with  
\[
H\circ \Phi_{\chi_3}=Z_2+Z_3+Q_4+\rm{negligible\ terms}
\]
where $Q_4$ is a Hamiltonian of order $4$ 
whose coefficients are bounded by $\mu_1(j)^{d-3+\delta}$ 
(see Lemma \ref{lem:hamvec}, estimate \eqref{est:smooth-vec-fie}) 
and $Z_3$ is a Hamiltonian of order $3$ 
which is resonant: $\{Z_2,Z_3\}=0$. 
Actually, as consequence Proposition \ref{mainNR}, 
$Z_3=0$ and thus we have eliminated all the terms of order $3$ in \eqref{?}. 

\noindent
In the case $d=2$, $Q_4^{\leq N}$ is still 
$(1-\delta)$-regularizing and we can perform a 
second Birkhoff normal form. 
Actually, since in eliminating $Q_4^{\leq N}$ 
we create terms of order at least $6$, 
we can eliminate both $Q_4^{\leq N}$ and $Q_5^{\leq N}$. 
So, for $d=2$, we are left with
\begin{equation*}\label{H6} 
\tilde{H}=H\circ \Phi_{\chi_3}\circ \Phi_{\chi_4+\chi_5}
=Z_2+ Z_4+Q_6+\rm{negligible\ terms}
\end{equation*}
where $Z_4$ is Hamiltonian of order $4$ 
which is resonant\footnote{Notice that there is no resonant term 
of odd order by Proposition \ref{mainNR}, 
in other words $Z_3=Z_5=0$.}, 
$\{Z_2,Z_4\}=0$, and $Q_6$ is a Hamiltonian of order $6$ 
whose coefficients are bounded by $N^{2\delta}$. 
Since resonant Hamiltonians commute with $N_s$, 
the first contribution in \eqref{?} is  $\{N_s,Q_6\}$.  
This is essentially the statement of Theorem \ref{BNFstep} 
in the case $d=2$ and $n=3$ and 
this achieves the Birkhoff normal forms step.

\medskip

Let us describe the modified energy step only in the case 
$d=2$ and $n=3$ and let us focus on the worst term in 
$\{N_s, \tilde H\}$, i.e. $\{N_s,Q_6\}$. 
Let us write
\[
Q_6=
\sum_{\substack{\sigma\in\{-1,1\}^6,\ j\in(\Z^d)^k\\ 
|j_1|\geq \cdots\geq |j_6|\\
\sum_{i=1}^6\sigma_i j_i=0}}
(Q_6)_{\sigma,j}u_{j_1}^{\sigma_1}\cdots u_{j_6}^{\sigma_6}\,.
\]
From Proposition \ref{mainNR} we learn that if  $\sigma_1\sigma_2=1$ 
then the small divisor associated with $(j,\sigma)$ 
is controlled  by $\mu_3(j)$ 
and thus we can eliminate the corresponding monomial 
by one more Birkhoff normal forms step\footnote{In fact in 
section \ref{section:modif}, for the sake of simplicity, 
we prefer to apply a modified energy strategy 
to all the terms of $Q_6$ (see also Remark \ref{choice}).}. 
Now if we assume $\sigma_1\sigma_2=-1$ we have
\begin{align*}
|\{N_s,u_{j_1}^{\sigma_1}\cdots u_{j_6}^{\sigma_6}\}|
&=
|\sum_{i=1}^6 \sigma_{j_i}\langle j_i\rangle^{2s}|
|u_{j_1}^{\sigma_1}\cdots u_{j_6}^{\sigma_6}|
\\&
\leq   (\langle j_1\rangle^{2s}-\langle j_2\rangle^{2s}+ 4 \langle j_3\rangle^{2s})
|u_{j_1}^{\sigma_1}\cdots u_{j_6}^{\sigma_6}|
\\&
\leq \big(s(\langle j_1\rangle^{2}-\langle j_2\rangle^{2})\langle j_1\rangle^{2(s-1)}
+ 4 \langle j_3\rangle^{2s}\big)
|u_{j_1}^{\sigma_1}\cdots u_{j_6}^{\sigma_6}|
\\&
\lesssim_s (\langle j_1\rangle^{2s-1}\langle j_3\rangle
+ 4 \langle j_3\rangle^{2s})|u_{j_1}^{\sigma_1}\cdots u_{j_6}^{\sigma_6}|
\\&
\lesssim_s \mu_1^{-1}\|u\|_{H^s}^6
\end{align*}
where we used the zero momentum condition, 
$\sum_{i=1}^6\sigma_i j_i=0$, to obtain $|j_1-j_2|\leq 4|j_3|$. This gain of one derivative, also known as the commutator trick, is central in a lot of  results about modified energy \cite{Delort-Tori,BFG} or growth of Sobolev norms \cite{Bou99, Del10, Ber,BGMR}.\\
So if $Q_6^-$ denotes the restriction of $Q_6$ to monomials satisfying 
$\sigma_1\sigma_2=-1$ we have essentially proved that
\[
|\{N_s,Q_6^{-,>N_1}\}|\lesssim  N_1^{-1}\|u\|_{H^s}^6\,.
\]
Then we can consider the modified  energy 
$N_s+E_6$ with $E_6$ solving 
\[
\{E_6,Z_2\}=\{N_s, Q_6^{-,\leq N_1}\}
\]
in such a way that
\[
\{N_s+E_6,\tilde H\}=\{N_s, Q_6^{-,> N_1}\}+\{N_s,\tilde H_7\}
+\{E_6,Z_4\}+\rm{negligible\ terms}\,.
\]
Since this modified energy will not produce new terms of order $7$, 
we can in the same time eliminate $Q_7^{-,\leq N_1}$.
Thus we obtain a new energy,  $N_s+E_6+E_7$, 
which is equivalent to $N_s$ in a neighborhood of the origin, 
and such that, by neglecting all the powers of $N^\delta$ and $N_1^\delta$ 
which appear when we work carefully 
(see \eqref{energyestimate} for a precise estimate),
\[
|\{N_s+E_6+E_7,\tilde H\}|\lesssim_s 
N_1^{-1}\|u\|_{H^s}^6+ \|u\|_{H^s}^8+N^{-1}\|u\|_{H^s}^3\,.
\]
Then, a suitable choice of $N$ and $N_1$ and a standard bootstrap argument  lead to, 
$T_\eps=O(\eps^{-6})$ by using this rough estimate, and 
$T_\eps=O(\eps^{-6^-})$ by using the precise estimate 
(see section \ref{section:boot}).

\medskip

\begin{remark}\label{choice}
In principle a Birkhoff normal form procedure gives more 
than just the control of $H^s$ norm of the solutions, 
it gives an equivalent Hamiltonian system and therefore potentially 
more information about the dynamics of the solutions. 
However, if one wants to control only the solution in $H^s$ norm, 
the modified energy method is sufficient and simpler. 
One could therefore imagine applying this last method from the beginning. 
However, when we iterate it, the modified energy method brings up terms that, 
when we apply a Birkhoff procedure, turn out to be zero. 
Unfortunately we have not been able to prove the cancellation of these terms 
directly by the modified energy method, that is why we use successively a Birkhoff normal form procedure and a modified energy procedure.
\end{remark}

\noindent
{\bf Notation.} 
We shall use the notation $A\lesssim B$ to denote 
$A\le C B$ where $C$ is a positive constant
depending on  parameters fixed once for all, 
for instance $d$, $n$. 
We will emphasize by writing $\lesssim_{q}$ 
when the constant $C$ 
depends on some other parameter $q$.

\section{Small divisors}\label{sd}
As already remarked in the introduction, the proof of Theorem \ref{thm-main}
is based on a normal form approach. In particular we have to deal with 
a small divisors problem involving linear combination of linear frequencies 
$\omega_{j}$ in \eqref{omegoneBeam}.

This section is devoted to establish suitable lower bounds 
for \emph{generic} (in a probabilistic way) 
choices of the parameters $\nu$ excepted for exceptional indices 
for which the small divisor is identically zero.
According to the following definition such indices are called \emph{resonant}.

\begin{definition}[Resonant indices]
\label{def_res}
Being given $r\geq 3$, $j_1,\dots,j_r \in \mathbb{Z}^d$ and $\sigma_1,\dots,\sigma_r\in \{-1,1\}$, the couple $(\sigma,j)$ is resonant if 
$r$ is even and there exists a permutation $\rho \in \mathfrak{S}_r$ such that 
$$
\forall k\in \llbracket 1,r/2 \rrbracket,  \ \begin{pmatrix} |j_{\rho_{2k-1},1}| \\ \vdots \\ |j_{\rho_{2k-1},d}| \end{pmatrix} =\begin{pmatrix} |j_{\rho_{2k},1}| \\ \vdots \\ |j_{\rho_{2k},d}| \end{pmatrix} \quad \mathrm{and} \quad \sigma_{\rho_{2k-1}} = -\sigma_{\rho_{2k}}.
$$
\end{definition}

In this section we aim at proving the following proposition whose proof is postponed to the end of this section (see subsection \ref{sub_proof_mainNR}). We recall that $a$ is defined with respect to the length, $\nu$, of the  torus by the relation $a_i= \nu_i^2$ (see \eqref{omegoneBeam}).
\begin{proposition} \label{mainNR} For almost all $a\in (1,4)^d$, there exists $\gamma>0$ such that for all $\delta>0$, $r\geq 3$, $\sigma_1,\dots,\sigma_r\in \{-1,1\}$, $j_1,\dots,j_r \in \mathbb{Z}^d$ satisfying $\sigma_1 j_1+\dots+\sigma_r j_r = 0$ and $|j_1|\geq \dots \geq |j_r|$ at least one of the following assertion holds
\begin{enumerate}[(i)]
\item $(\sigma,j)$ is resonant (see Definition \ref{def_res})
\item $\sigma_1 \sigma_2 = 1$ and 
$$
\left| \sum_{k=1}^r \sigma_k \sqrt{1+|j_k|_a^4} \right| \gtrsim_r \gamma\, (\langle j_3 \rangle \dots \langle j_r \rangle)^{-9dr^2},
$$
\item $\sigma_1 \sigma_2 = -1$ and
$$
\left| \sum_{k=1}^r \sigma_k \sqrt{1+|j_k|_a^4} \right| \gtrsim_{r,\delta} \gamma\, \langle j_1 \rangle^{-(d-1+\delta)} (\langle j_3 \rangle \dots \langle j_r \rangle)^{-44dr^4}.
$$
\end{enumerate}
\end{proposition}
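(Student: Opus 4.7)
The plan is to combine a structural analysis of the smooth function
\[
F_{\sigma,j}(a) := \sum_{k=1}^r \sigma_k \sqrt{1 + |j_k|_a^4}, \qquad a \in (1,4)^d,
\]
with a Borel--Cantelli argument exploiting genericity of $a$. For each non-resonant $(\sigma,j)$ I aim for a polynomial-in-$j$ measure estimate $|\{a \in (1,4)^d : |F_{\sigma,j}(a)| < \eta\}| \lesssim C(\sigma,j)\,\eta^{1/p}$ with $p = O(r)$. Choosing $\eta_{\sigma,j}$ equal to the right-hand sides of (ii) or (iii) (with free parameter $\gamma$) and summing over non-resonant $(\sigma,j)$ and dyadic $\gamma$, Borel--Cantelli produces a full-measure set of $a$ on which (ii) and (iii) hold. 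The core measure tool is Pyartli's lemma: if $\|\partial_t^p f\|_\infty \geq \kappa$ on $[\alpha,\beta]$, then $|\{t : |f(t)| < \eta\}| \lesssim p (\eta/\kappa)^{1/p}$. I would apply it to $t\mapsto F_{\sigma,j}(a(t))$ along well-chosen line segments in $(1,4)^d$.

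\emph{Case (ii): $\sigma_1\sigma_2=1$.} This case is largely deterministic. The two dominant terms add, so $|\sigma_1\omega_{j_1}+\sigma_2\omega_{j_2}|=\omega_{j_1}+\omega_{j_2} \gtrsim \langle j_1\rangle^2$, while $|\sum_{k\geq 3}\sigma_k\omega_{j_k}| \leq r\langle j_3\rangle^2$. Whenever $\langle j_1\rangle \gtrsim r^{1/2}\langle j_3\rangle$ this already gives $|F_{\sigma,j}| \gtrsim \langle j_1\rangle^2$, far stronger than claimed. In the complementary regime the whole vector $j$ is polynomially bounded in $\langle j_3\rangle\cdots\langle j_r\rangle$, only finitely many configurations survive per choice of $(j_3,\ldots,j_r)$, and the Pyartli bound combined with summability yields (ii), the exponent $9dr^2$ providing the room needed to sum.

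\emph{Case (iii): $\sigma_1\sigma_2=-1$.} This is the Diophantine heart. Zero momentum forces $|j_1-j_2| \lesssim r\langle j_3\rangle$. Writing
\[
\omega_{j_1}(a)-\omega_{j_2}(a) \;=\; \frac{(|j_1|_a^2-|j_2|_a^2)(|j_1|_a^2+|j_2|_a^2)}{\omega_{j_1}(a)+\omega_{j_2}(a)},
\]
the first numerator factor $|j_1|_a^2-|j_2|_a^2 = \sum_i a_i(j_{1,i}^2-j_{2,i}^2)$ is a linear form in $a$ with integer coefficients of size $|(j_{1,i}-j_{2,i})(j_{1,i}+j_{2,i})| \lesssim r\langle j_3\rangle\langle j_1\rangle$. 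A classical Khintchine--Groshev bound ensures that, for almost every $a\in(1,4)^d$, any non-trivial such form with coefficient magnitude at most $L$ is $\gtrsim L^{-(d-1+\delta)}$. Since the ratio $(|j_1|_a^2+|j_2|_a^2)/(\omega_{j_1}+\omega_{j_2})$ is $\asymp 1$ for $\langle j_1\rangle\gg 1$, this gives $|\omega_{j_1}-\omega_{j_2}| \gtrsim \langle j_1\rangle^{-(d-1+\delta)}\langle j_3\rangle^{-(d-1+\delta)}$, which precisely accounts for the loss $\langle j_1\rangle^{d-1+\delta}$ in (iii). To absorb the remaining tail $\sum_{k\geq 3}\sigma_k\omega_{j_k}(a)$, which could cancel this leading part, I would couple the Khintchine estimate with a Pyartli argument along a line transverse to the leading direction, iterated up to order $p\leq O(r)$; the exponent $44dr^4$ then reflects the cost of this iteration.

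\emph{Main obstacle.} The principal difficulty is the \emph{structural non-vanishing} claim underlying Pyartli: that non-resonance of $(\sigma,j)$ (Definition \ref{def_res}) forces some derivative of $F_{\sigma,j}$ in $a$ of order $\leq O(r)$ to be bounded below by a quantity depending only on $\langle j_3\rangle,\ldots,\langle j_r\rangle$, \emph{independently} of $\langle j_1\rangle,\langle j_2\rangle$. This is most naturally handled by induction on $r$ using the pairing structure of resonances: differentiating in $a_i$ produces coefficients $\approx \sum_k \sigma_k j_{k,i}^2$, and when all these leading derivatives vanish simultaneously, the two largest indices must pair off (in the sense of Definition \ref{def_res}), reducing the problem to a strictly smaller non-resonant configuration. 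Simultaneously preserving the tight $\langle j_1\rangle^{d-1+\delta}$ loss in case (iii) while keeping the polynomial factors in $\langle j_3\rangle,\ldots,\langle j_r\rangle$ under control requires a careful balance between the Pyartli order $p$ and the Khintchine exponent $d-1+\delta$; the explicit exponents $9dr^2$ and $44dr^4$ are the outcome of this balancing.
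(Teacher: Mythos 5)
Your overall plan (derivative-based measure estimates of Pyartli/Eliasson type, case split on $\sigma_1\sigma_2$, dyadic summation to extract a full-measure set) is the right framework, and your treatment of case (ii) matches the paper. The gap is in case (iii). Invoking Khintchine--Groshev on the linear form $\kappa\cdot a = |j_1|_a^2-|j_2|_a^2$ gives a pointwise lower bound on $|\kappa\cdot a|$ (on a full-measure set of $a$), but that does \emph{not} give a lower bound on the full sum $F_{\sigma,j}(a) = \sigma_1(\omega_{j_1}-\omega_{j_2}) + \sum_{k\geq3}\sigma_k\omega_{j_k}$: the tail $\sum_{k\geq3}\sigma_k\omega_{j_k}(a)$ can be of any sign and of size up to $r\langle j_3\rangle^2$, which for large $|j_1|$ vastly exceeds $|\kappa|^{-(d-1+\delta)}$, so nothing stops near-cancellation. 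Knowing two quantities are individually bounded below does not bound their sum below. Your proposed ``coupling with a Pyartli argument along a transverse line'' is precisely where the actual argument lives, and it is left entirely unspecified.

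What actually carries the day is a derivative estimate on the \emph{full} function $F_{\sigma,j}$: one computes $\partial_{a_i}F_{\sigma,j} \approx \sum_k\sigma_k j_{k,i}^2$, observes that when $j_{\geq3,i}\neq 0$ and $j_{1,i}$ is large the term $\sigma_1(j_{1,i}^2-j_{2,i}^2)$ dominates, and concludes directly that $|\{a : |F_{\sigma,j}(a)|<\gamma\}|\lesssim \gamma/|j_{1,i}|$ (the paper's Lemma~\ref{lem_2l_1} and Corollary~\ref{cor_gen_nice}). The loss $\langle j_1\rangle^{-(d-1+\delta)}$ then comes from summing over $j_1\in\mathbb{Z}^d$ with weights chosen to make $\sum_{j_1}|j_1|^{-d-\delta}$ converge --- a Borel--Cantelli step, not a Diophantine bound on $\kappa\cdot a$. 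You correctly identify the structure ``$\partial_{a_i}F\approx\sum_k\sigma_kj_{k,i}^2$'' in your ``main obstacle'' paragraph, but you frame it as a difficulty rather than as the mechanism of the proof, and you then ask for a derivative bound \emph{independent} of $j_1,j_2$, which is the wrong goal: a bound \emph{growing} like $|j_1|$ is exactly what makes the summation work. Finally, the genuinely degenerate regime --- $j_1$ large but with all the $j_{1,i}$ bounded for $i$ where $j_{\geq3,i}\neq0$ --- needs a separate argument (the paper's Lemma~\ref{petit_lemme} shows $\kappa_{j_1}$ then has a vanishing component, allowing a one-dimension-lower Pyartli estimate uniform in $j_1$, via Lemma~\ref{lem_gen}); your induction sketch gestures in this direction but does not close it, and the exponent $44dr^4$ is asserted without the required bookkeeping.
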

We refer the reader to Lemma \ref{eq_assump} and its corollary to understand how we get this degeneracy with respect to $j_1$.

\subsection{A weak non-resonance estimate}

In this subsection we aim at proving the following technical lemma.
\begin{lemma}
\label{lem_gen}  If $r\geq 1$, $(j_1,\dots,j_r) \in (\mathbb{N}^d)^r$ is injective\footnote{i.e. $\forall k,\ell\in \llbracket 1,r \rrbracket, \ k\neq \ell \Rightarrow \ j_{k}\neq j_{\ell}$.}, $n \in (\mathbb{Z}^*)^r$ and $\kappa \in \mathbb{R}^d$ satisfies $\kappa_{i_{\star}}=0$ for some $i_{\star}\in \llbracket 1,d\rrbracket$ then we have
$$
\forall \gamma>0, \  \left|\{ a \in (1,4)^d \ : \ \big\vert \kappa \cdot a + \sum_{k=1}^r n_k \sqrt{1+|j_k|^4_a}\big\vert<\gamma \}\right| \lesssim_{r,d} \gamma^{\frac1{r(r+1)}}  (\langle j_1 \rangle \dots \langle j_r \rangle)^{\frac{12}{r+1}}.
$$
\end{lemma}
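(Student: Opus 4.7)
The plan is to reduce the $d$-dimensional measure estimate to a one-dimensional sublevel set estimate via Fubini. Fix $a_i$ for $i\neq i_\star$, let $t:=a_{i_\star}\in(1,4)$, and set $\alpha_k:=(j_k)_{i_\star}^2\in\N$, $\beta_k:=\sum_{i\neq i_\star} a_i (j_k)_i^2\geq 0$. The function of interest then reads
\[
g(t) = C + \sum_{k=1}^r n_k \sqrt{1+(\alpha_k t + \beta_k)^2},
\]
where $C$ depends on the frozen coordinates but not on $t$; this independence is precisely the role of the hypothesis $\kappa_{i_\star}=0$, which annihilates the only linear-in-$t$ contribution of $\kappa\cdot a$. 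It then suffices to bound $|\{t\in(1,4):|g(t)|<\gamma\}|$ uniformly in $a':=(a_i)_{i\neq i_\star}$ and integrate.

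For the one-dimensional bound I would invoke a Russmann-type sublevel lemma: if $\min_t\max_{0\leq\ell\leq N}|g^{(\ell)}(t)|\geq\delta$ (with $\|g\|_{C^N}$ also suitably controlled), then $|\{t:|g(t)|<\gamma\}|\lesssim_N(\gamma/\delta)^{1/N}$. To match the target exponents $\gamma^{1/(r(r+1))}$ and $(\langle j_1\rangle\cdots\langle j_r\rangle)^{12/(r+1)}$, the natural choice is to take $N$ of order $r(r+1)$ and aim for a derivative lower bound $\delta\gtrsim(\prod_k\langle j_k\rangle)^{-\mathrm{cst}\cdot r}$.

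The derivative lower bound I would obtain through a Wronskian/Vandermonde argument. First, I would regroup indices sharing the same projected pair $(\alpha_k,\beta_k)$ (distinct $j_k\in\N^d$ can collide on this two-coordinate projection) into integer coefficients $m_{\alpha,\beta}:=\sum_{k:(\alpha_k,\beta_k)=(\alpha,\beta)}n_k$. The functions $t\mapsto\sqrt{1+(\alpha t+\beta)^2}$ attached to distinct pairs are $\R$-linearly independent, and the injectivity of $j$ together with $n_k\in\Z^*$ prevents all $m_{\alpha,\beta}$'s from vanishing simultaneously. Using the asymptotic expansion $\sqrt{1+u^2}=|u|+\tfrac{1}{2|u|}+O(|u|^{-3})$ to extract the leading behaviour of the derivatives, a Vandermonde computation in the $\alpha_k$'s gives a lower bound on the Wronskian of the surviving functions, and Cramer's rule then converts this into the pointwise estimate $\max_\ell|g^{(\ell)}(t_0)|\gtrsim(\prod_k\langle j_k\rangle)^{-\mathrm{cst}\cdot r}$.

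The hard part will be the quantitative Wronskian estimate. The derivatives $f_k^{(\ell)}(t)$ of $f_k(t):=\sqrt{1+(\alpha_k t+\beta_k)^2}$ decay rapidly in $|\alpha_k t+\beta_k|$ as soon as $\ell\geq 2$, so a brute-force determinant bound would lose far too many powers of $\langle j_k\rangle$; a careful accounting of leading-order asymptotics, together with the integrality of the $m_{\alpha,\beta}$'s (any near-cancellation must be near-cancellation of \emph{nonzero integers}), is what should keep the loss polynomial and match the precise exponents $1/(r(r+1))$ and $12/(r+1)$. A final Fubini integration in $a'\in(1,4)^{d-1}$ then turns the uniform one-dimensional bound into the claimed $d$-dimensional estimate.
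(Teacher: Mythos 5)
Your plan of freezing all coordinates except $a_{i_\star}$ and proving a uniform one--dimensional sublevel estimate has a genuine gap, and the gap is precisely why the paper does \emph{not} take this route. If every $j_k$ satisfies $(j_k)_{i_\star}=0$ — a case allowed by the hypotheses — then every $\alpha_k=0$, so every $\sqrt{1+(\alpha_k t+\beta_k)^2}$ is constant in $t$, and since $\kappa_{i_\star}=0$ the term $\kappa\cdot a$ is also constant in $t$. Thus $g$ is constant in the frozen direction, the $1$D measure of $\{|g|<\gamma\}$ is either $0$ or the whole interval, and no amount of Wronskian/Russmann machinery recovers a bound. One would then have to fall back on a $(d-1)$--dimensional version of the same statement, but the reduced $\kappa'=(\kappa_i)_{i\neq i_\star}$ need no longer have a vanishing coordinate, so the hypothesis that makes the lemma true is lost and a clean induction does not close. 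A milder version of the same pathology hits you whenever several $\alpha_k$ coincide or vanish (which happens whenever several $j_k$ share the $i_\star$-component): your Vandermonde in the $\alpha_k$'s degenerates, and the regrouping into integers $m_{\alpha,\beta}$ does not help because the resulting reduced family of functions may have \emph{fewer} than two members with nonzero $\alpha$, which is not enough structure to run the derivative estimate.

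The paper sidesteps all of this with a change of variables $a\mapsto(m,b)$, $m=a_1^{-2}$, $b_i=a_{i+1}/a_1$ (taking $i_\star=1$). The identity $\sqrt{1+|j_k|_a^4}=m^{-1/2}\sqrt{m+|j_k|^4_{(1,b)}}$ and $\kappa\cdot a=m^{-1/2}h(b)$ turn $G(a)$ into $m^{-1/2}F(m,b)$, and the crucial structural gain is that $m$ now enters \emph{inside every square root}, as a pure shift $\sqrt{m+c_k}$. Distinctness of the $j_k\in\mathbb{N}^d$ implies the affine forms $b\mapsto|j_k|^2_{(1,b)}$ are pairwise distinct, so the shifts $c_k=|j_k|^4_{(1,b)}$ are pairwise separated for $b$ outside a small exceptional set, and the functions $m\mapsto\sqrt{m+c_k}$ always have nontrivial $m$-dependence — there is no degenerate direction. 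After that, the derivative lower bound follows from an explicit Vandermonde on $(m+c_k)^{-\ell}$ plus the Eliasson sublevel lemma, exactly the kind of machinery you invoke, but on a clean $\sqrt{m+c_k}$ family rather than on the trickier $\sqrt{1+(\alpha_k t+\beta_k)^2}$. In short: your one--dimensional strategy is the right philosophy, but the choice of fiber matters, and the rescaling $a\mapsto(m,b)$ is not a cosmetic convenience — it is what guarantees nondegeneracy in the fibered direction. Without it, the lemma's hypotheses do not supply a usable lower bound on any derivative of $g$, and the proof does not go through.
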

Their proofs (postponed to the end of this subsection) rely essentially on the following lemma.
\begin{lemma} \label{lem_good_form} If $I,J$ are two bounded intervals of $\mathbb{R}_+^*$, $r\geq 1$, $(j_1,\dots,j_r) \in (\mathbb{N}^d)^r$ is injective, $n \in (\mathbb{Z}^*)^r$ and $h: J^{d-1}\to \mathbb{R}$ is measurable then for all $\gamma >0$ we have
$$
 \left|\{ (m,b) \in I\times J^{d-1} \ : \ \big\vert h(b)+ \sum_{k=1}^r n_k \sqrt{m+ |j_k|^4_{(1,b)}}\big\vert<\gamma \}\right| \lesssim_{r,d,I,J} \gamma^{\frac1{r(r+1)}}  (\langle j_1 \rangle \dots \langle j_r \rangle)^{\frac{12}{r+1}}
$$
where $(1,b):=(1,b_1,\dots,b_{d-1})\in \mathbb{R}^d$.
\end{lemma}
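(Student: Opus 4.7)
\emph{Proof sketch.} My plan is a two-step sublevel-set estimate: first in $m$ for fixed $b$, using quantitative linear independence of the $r$ functions $m\mapsto \sqrt{m+y_k(b)}$ via a Wronskian / Cramer argument; then average over $b$, controlling degenerate values of $b$ by a polynomial sublevel estimate on a natural discriminant. Fix $b\in J^{d-1}$, set $y_k(b) := |j_k|^4_{(1,b)}$ and $g_b(m) := h(b)+\sum_k n_k\sqrt{m+y_k(b)}$. The derivatives
\begin{equation*}
\partial_m^{\ell-1}g_b(m) = c_{\ell-1}\sum_{k=1}^r n_k(m+y_k)^{(3-2\ell)/2}, \qquad \ell=1,\dots,r,
\end{equation*}
with explicit non-zero constants $c_\ell$, realise the vector $(\partial_m^{\ell-1}g_b(m))_{\ell=1}^r$ as $M(m,b)n$.

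A Vandermonde factorisation of $M$ in the variables $u_k:=(m+y_k)^{-1}$ gives
\begin{equation*}
|\det M(m,b)|=|\hat C_r|\prod_{k<l}|y_k(b)-y_l(b)|\prod_{k=1}^r (m+y_k(b))^{3/2-r}.
\end{equation*}
Combined with $m+y_k\le C_{I,J}\langle j_k\rangle^4$, $|M_{\ell k}|\le C_{I,J}\langle j_k\rangle^2$ and $\|n\|_\infty\ge 1$ (since $n\in(\Z^*)^r$), Cramer's rule produces a uniform lower bound
\begin{equation*}
\max_{1\le\ell\le r}|\partial_m^{\ell-1}g_b(m)|\;\gtrsim_{r,I,J}\;\frac{P(b)}{(\langle j_1\rangle\cdots\langle j_r\rangle)^{\alpha_r}},\qquad P(b):=\prod_{k<l}|y_k(b)-y_l(b)|,
\end{equation*}
for an explicit $\alpha_r$. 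The classical one-variable sublevel lemma of Bakhtin / R\"ussmann type then yields
$|\{m\in I:|g_b(m)|<\gamma\}|\lesssim_{r,I}\big(\gamma(\langle j_1\rangle\cdots\langle j_r\rangle)^{\alpha_r}/P(b)\big)^{1/r}$.

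The next step is to integrate over $b$ and control the set on which $P(b)$ is small. The crucial observation is that $P(b)$ is a non-trivial polynomial in $b$: each factor splits as $y_k(b)-y_l(b)=(|j_k|^2_{(1,b)}-|j_l|^2_{(1,b)})(|j_k|^2_{(1,b)}+|j_l|^2_{(1,b)})$, and the affine factor $(j_{k,1}^2-j_{l,1}^2)+\sum_{i=1}^{d-1}b_i(j_{k,i+1}^2-j_{l,i+1}^2)$ is not identically zero because the injectivity of $(j_1,\dots,j_r)$ in $\N^d$ forces at least one coordinate-difference to be non-zero. Hence $P$ is a non-trivial polynomial of total degree $\lesssim r^2$, and a standard Remez / polynomial-R\"ussmann estimate gives $|\{b\in J^{d-1}:P(b)<\eta\}|\lesssim_{r,d,J}\big(\eta(\langle j_1\rangle\cdots\langle j_r\rangle)^{\beta_r}\big)^{1/D}$ for explicit $D\lesssim r^2$ and $\beta_r$.

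Splitting $J^{d-1}=\{P<\eta\}\cup\{P\ge\eta\}$, estimating the $m$-sublevel on the good set and using the polynomial sublevel bound on the bad set, then optimising in $\eta$ (roughly $\eta\sim \gamma^{D/(D+r)}$), produces a measure bound of the form announced, $\gamma^{1/(r(r+1))}(\langle j_1\rangle\cdots\langle j_r\rangle)^{12/(r+1)}$, after a careful bookkeeping of constants. The main obstacle is this last paragraph: obtaining the polynomial sublevel estimate with precisely the stated exponents forces a uniform lower bound on some leading coefficient of $P$ in terms of the $\langle j_k\rangle$'s, which in turn requires a detailed combinatorial inspection of the differences $j_{k,i}^2-j_{l,i}^2$; once that is in place, the Wronskian/Cramer step and the $\eta$-optimisation are essentially mechanical.
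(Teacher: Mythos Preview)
Your architecture is essentially the paper's: a Vandermonde/Cramer lower bound on the $m$-derivatives of $g_b$, a R\"ussmann-type sublevel estimate in $m$, then excision of a bad set of parameters $b$ and optimisation in a threshold $\eta$. Two points deserve comment.

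First, a minor slip: your matrix identity for $(\partial_m^{\ell-1}g_b)_{\ell=1}^r$ is wrong at $\ell=1$, because $g_b$ itself carries the unknown additive term $h(b)$. As written, Cramer's rule only controls $\max(|g_b-h(b)|,|\partial_m g_b|,\ldots,|\partial_m^{r-1}g_b|)$, and the first entry is useless for the sublevel set of $g_b$ (it could be large while $g_b\equiv 0$). The paper uses the derivatives $\partial_m^1 g,\ldots,\partial_m^r g$ instead, which kills $h(b)$; the resulting Vandermonde matrix is in the variables $(m+y_k)^{-1}$ and is inverted explicitly via the Lagrange interpolation formula. This is a one-line fix on your side.

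Second, and more substantively, the paper sidesteps entirely the ``main obstacle'' you flag. Rather than applying a multivariate Remez inequality to the full discriminant $P(b)=\prod_{k<l}|y_k(b)-y_l(b)|$ (which, as you note, would require a uniform lower bound on some leading coefficient of $P$), the paper excises the bad $b$'s \emph{pair by pair}, and uses not the quartic differences $y_i-y_k$ but the \emph{affine} forms
\[
|j_i|^2_{(1,b)}-|j_k|^2_{(1,b)}=(j_{i,1}^2-j_{k,1}^2)+\sum_{\ell=1}^{d-1}b_\ell\,(j_{i,\ell+1}^2-j_{k,\ell+1}^2).
\]
The coefficients here are integers, and injectivity of $(j_1,\dots,j_r)$ forces at least one of them to be nonzero, hence $\geq 1$ in absolute value. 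Thus each set $\mathcal P_\eta^{(i,k)}=\{b:||j_i|^2_{(1,b)}-|j_k|^2_{(1,b)}|<\eta\}$ has measure $\lesssim_{J}\eta$ \emph{linearly}, with no dependence on the $\langle j_k\rangle$'s and no Remez inequality needed. On the complement of $\bigcup_{i<k}\mathcal P_\eta^{(i,k)}$ the paper then bounds $|V^{-1}|_\infty\lesssim\eta^{-(r-1)}(\langle j_1\rangle\cdots\langle j_r\rangle)^6$ directly from the Lagrange cofactor formula, applies a quantitative R\"ussmann lemma (Lemma~B.1 of \cite{Eli02}) with an auxiliary parameter $\rho$, and optimises first in $\rho$ and then in $\eta$.

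In short: your plan is sound and close to the paper's, but the clean trick you are missing is to work with the affine differences $|j_i|^2_{(1,b)}-|j_k|^2_{(1,b)}$ factor by factor rather than with their quartic product $P$. This makes the $b$-excision elementary and dissolves the leading-coefficient obstacle you identified.
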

\begin{proof}[Proof of Lemma \ref{lem_good_form}] The proof of this lemma is classical and follows the lines of \cite{Bam03}. 

Without loss of generality, we assume that $\gamma\in (0,1)$. Let $\eta\in (0,1)$ be a positive number which will be optimized later with respect to $\gamma$. If $1\leq i<k \leq r$ then we have
$$
| j_i|_{1,b}^2 - | j_k|_{1,b}^2 = (j_{i,1}^2-j_{k,1}^2)+b_1 (j_{i,2}^2-j_{k,2}^2) + \dots + b_{d-1}(j_{i,d}^2-j_{k,d}^2).
$$
Since, by assumption, $(j_1,\dots,j_r)$ is injective, either there exists $\ell \in \llbracket 2,d \rrbracket$ such that $j_{i,\ell}\neq j_{k,\ell}$ or $j_{i,1}\neq j_{k,1}$ and $j_{i,\ell}=j_{k,\ell}$ for $\ell = 2,\dots, d$. Note that in this second case, we have $|| j_i|_{1,b}^2 - | j_k|_{1,b}^2 |\geq 1$.
In any case, since the dependency with respect to $b$ is affine the set
\begin{center}
$\mathcal{P}_{\eta}^{(i,k)}  = \{ b \in J^{d-1} \ | \ | j_i|_{1,b}^2 - | j_k|_{1,b}^2 | < \eta \}$ satisfies $|\mathcal{P}_{\eta}^{(i,k)}|<\, \eta (1+|J|^{d-1})$.
\end{center}
Therefore, we have  
\begin{multline}
\label{Tyrannosaurus}
\{ (m,b) \in I\times J^{d-1}  :  \big\vert  h(b)+ \sum_{k=1}^r n_k \sqrt{m+ |j_k|^4_{(1,b)}}\big\vert<\gamma \} 
\leq \frac{r(r-1)}2  |I|\, \eta\, (1+|J|^{d-1}) \\+ |J|^{d-1} \sup_{\forall i<k, \ b \notin \mathcal{P}_{\eta}^{(i,k)} } |\{ m \in I \ : \ \big\vert  h(b)+ \sum_{k=1}^r n_k \sqrt{m+ |j_k|^4_{(1,b)}} \big\vert<\gamma \}|.
\end{multline}
In order to estimate this last measure we fix $b\in J^{d-1}\setminus \bigcup_{i<k} \mathcal{P}_{\eta}^{(i,k)}$ and we define $g:I\to \mathbb{R}$ by
$$
g(m) =  h(b)+ \sum_{k=1}^r n_k \sqrt{m+ |j_k|^4_{(1,b)}}.
$$
By a straightforward calculation, for $\ell \geq 1$, we have
\begin{equation}
\label{ecureuil} 
\partial_m^\ell g(m)  = c_{\ell} \sum_{k=1}^r n_k (m+ |j_k|^4_{(1,b)})^{\frac12-\ell} \quad \mathrm{where} \quad c_{\ell} = \prod_{i=0}^{\ell-1} \frac12 - i.
\end{equation}
Therefore, we have
$$
\begin{pmatrix}
c_{1}^{-1}  \partial_m^{1} g \\ \vdots \\ c_{r}^{-1}\partial_m^{r} g
\end{pmatrix} = \begin{pmatrix} (m+ |j_1|^4_{(1,b)})^{0} & \dots & (m+ |j_r|^4_{(1,b)})^{0} \\ \vdots & & \vdots \\  (m+ |j_1|^4_{(1,b)})^{-(r-1)} & \dots & (m+ |j_r|^4_{(1,b)})^{-(r-1)}  \end{pmatrix}  \begin{pmatrix}
n_1 \sqrt{m+ |j_1|^4_{(1,b)}}^{-1} \\ \vdots \\ n_r \sqrt{m+ |j_r|^4_{(1,b)}}^{-1}  \end{pmatrix}.
$$
Denoting by $V$ this Vandermonde matrix, by $|x|_{\infty} := \max |x_i|$ for $x\in \mathbb{R}^d$ and also by $|\cdot|_\infty$ the associated matrix norm, we deduce that
\begin{equation}
\label{grenouille}
\max_{i=1}^{r} c_{i}^{-1} |\partial_m^{i} g(m)| \geq |V^{-1}|_{\infty}^{-1} \max_{i=1}^{r} |n_i| \sqrt{m+ |j_i|^4_{(1,b)}}^{-1}.
\end{equation}
We recall that the invert of $V$ is given by
\begin{equation}
\label{lapin}
(V^{-1})_{i,\ell} = (-1)^{r-\ell} \frac{S_{r-\ell}((\frac1{m+|j_k|^4_{(1,b)}})_{k\neq i} )}{\displaystyle \prod_{k\neq i} \frac1{m+|j_i|^4_{(1,b)}}- \frac1{m+|j_k|^4_{(1,b)}} }
\end{equation}
(this formula can be easily derived using the Lagrange interpolation polynomials) where $S_{\ell} : \mathbb{R}^{r-1}\to \mathbb{R}$ is the $\ell^{st}$ elementary symmetric function
$$
S_\ell(x) = \sum_{1\leq k_1<\dots<k_{\ell} \leq r-1} x_{k_1}\dots x_{k_{\ell}} \quad \mathrm{and} \quad S_0(x):=1.
$$
Furthermore, we have
\begin{equation}
\label{asticot}
|V^{-1}|_{\infty} = \max_{i=1}^r \sum_{\ell=1}^r |(V^{-1})_{i,\ell}|.
\end{equation}
To estimate $|V^{-1}|_{\infty} $ in \eqref{grenouille}, we use the estimates
$$
S_{r-\ell}((\frac1{m+|j_k|^4_{(1,b)}})_{k\neq i} ) \lesssim_{r,J,I} 1 \quad \mathrm{and} \quad \left|\frac1{m+|j_i|^4_{(1,b)}}- \frac1{m+|j_k|^4_{(1,b)}}\right| \gtrsim_{J,I} \frac{\eta}{\langle j_k \rangle^6}.
$$
Indeed, if $||j_i|^4_{(1,b)} - |j_k|^4_{(1,b)}|\geq \frac12|j_i|^4_{(1,b)} $ we have
$$
\left|\frac1{m+|j_i|^4_{(1,b)}}- \frac1{m+|j_k|^4_{(1,b)}}\right|  = \left|\frac{|j_i|^4_{(1,b)} - |j_k|^4_{(1,b)}}{(m+|j_i|^4_{(1,b)})( m+|j_k|^4_{(1,b)})}\right|  \gtrsim_{I,J} \frac1{\langle j_k \rangle^4}
$$
and conversely, if $||j_i|^4_{(1,b)} - |j_k|^4_{(1,b)}|\leq \frac12|j_i|^4_{(1,b)}$ then $|j_i|^4_{(1,b)} \leq 2 |j_k|^4_{(1,b)}$ and so, since $b\in J^{d-1}\setminus \bigcup_{i<k} \mathcal{P}_{\eta}^{(i,k)}$, we have
$$
\left|\frac1{m+|j_i|^4_{(1,b)}}- \frac1{m+|j_k|^4_{(1,b)}}\right|  \gtrsim_{I,J} \frac{(|j_i|^2_{(1,b)}+|j_k|^2_{(1,b)})||j_i|^2_{(1,b)}-|j_k|^2_{(1,b)}| }{\langle j_k \rangle^8}  \gtrsim_{I,J}  \frac{\eta}{\langle j_k \rangle^6}.
$$
Therefore by \eqref{asticot} and \eqref{lapin}, we have
$$
|V^{-1}|_{\infty} \lesssim_{r,I,J} \eta^{-(r-1)} (\langle j_1 \rangle \dots \langle j_r \rangle)^6
$$
Consequently, we deduce from \eqref{grenouille} that 
\begin{equation}
\label{kangourou}
\max_{i=1}^{r} |\partial_m^{i} g(m)| \gtrsim_{r,I,J} \eta^{r-1} (\langle j_1 \rangle \dots \langle j_r \rangle)^{-6}  |n|_{\infty}.
\end{equation}
Furthermore, considering \eqref{ecureuil}, it is clear that
$$
|\partial_m^\ell g(m)| \lesssim_{\ell,I,J} |n|_{\infty}.
$$
As a consequence, being given $\rho>0$ (that will be optimized later), applying Lemma B.1. of \cite{Eli02}, we get $N$ sub-intervals of $I$, denoted $\Delta_{1},\dots,\Delta_{N}$ such that
$$
N \lesssim_{I,r} (\langle j_1 \rangle \dots \langle j_r \rangle)^{6} \eta^{-(r-1)},
\quad \max_{i=1}^N |\Delta_{i}| \lesssim_{I,r} \left( \frac{\rho (\langle j_1 \rangle \dots \langle j_r \rangle)^{6}}{\eta^{r-1} |n|_{\infty}} \right)^{\frac1{r-1}},
$$
$$
|\partial_m g(m)| \geq \rho \quad \forall m \in I \setminus (\Delta_1 \cup \dots \cup \Delta_N).
$$
Observing that $I \setminus (\Delta_1 \cup \dots \cup \Delta_N)$ can be written as the union of $M$ intervals with $M\lesssim 1 + N$, we deduce that
\begin{multline*}
|\{ m \in I \ : \ \big\vert  h(b)+ \sum_{k=1}^r n_k \sqrt{m+ |j_k|^4_{(1,b)}} \big\vert<\gamma \}| < M \rho^{-1} \gamma + N \max_{i=1}^N |\Delta_{i}|\\
\lesssim_{I,r}  (\langle j_1 \rangle \dots \langle j_r \rangle)^{6} \eta^{-(r-1)} \left[ \rho^{-1} \gamma +  \left( \frac{\rho (\langle j_1 \rangle \dots \langle j_r \rangle)^{6}}{\eta^{r-1} |n|_{\infty}} \right)^{\frac1{r-1}} \right].
\end{multline*}
We optimize $\rho$ to equalize the two terms in this last sum :
$$
\rho^{\frac{r}{r-1}}= \gamma \left(\frac{\eta^{r-1} |n|_{\infty}} { (\langle j_1 \rangle \dots \langle j_r \rangle)^{6}}\right)^{\frac1{r-1}}.
$$
This provides the estimate
\begin{equation*}
\begin{aligned}
|\{ m \in I \ : \ \big\vert  h(b)+ \sum_{k=1}^r n_k &\sqrt{m+ |j_k|^4_{(1,b)}} \big\vert<\gamma \}| 
\\ &\lesssim_{I,r} \gamma^{\frac1r}  
(\langle j_1 \rangle \dots \langle j_r \rangle)^{6} \eta^{-(r-1)}  
\left(\frac { (\langle j_1 \rangle \dots \langle j_r \rangle)^{6}}{\eta^{r-1} 
|n|_{\infty}}\right)^{\frac1{r}} 
\\ &\lesssim_{I,r} \left(\frac{\gamma}{|n|_{\infty}}\right)^{\frac1r} 
\eta^{-(r-1+\frac{r-1}r)} (\langle j_1 \rangle \dots \langle j_r \rangle)^{12}\,.
\end{aligned}
\end{equation*}
Finally, we optimize \eqref{Tyrannosaurus} by choosing
$$
\eta =\gamma^{\frac1r} 
\eta^{-(r-1+\frac{r-1}r)} (\langle j_1 \rangle \dots \langle j_r \rangle)^{12}
$$
and, recalling that $|n|_{\infty}\geq 1$, we get
$$
\begin{aligned}
 \left|\{ (m,b) \in I\times J^{d-1} \ : \ \big\vert h(b)
 + \sum_{k=1}^r n_k \sqrt{m+ |j_k|^4_{(1,b)}}\big\vert<\gamma \}\right| &
 \\&\!\!\!\!\!\!\!\!\!\!\!\!\!\! 
 \lesssim_{r,d,I,J} \left(\gamma^{\frac1{r}}  
 (\langle j_1 \rangle \dots \langle j_r \rangle)^{12}\right)^{\frac1{r+\frac{r-1}{r}}}\,.
 \end{aligned}
$$
Since this measure is obviously bounded by $|I||J|^{d-1}$, the exponent $r+\frac{r-1}{r}$ can be replaced by $r+1$ in the above expression which conclude this proof.
\end{proof}

Now using Lemma \ref{lem_good_form}, we prove Lemma \ref{lem_gen}.
\begin{proof}[Proof of Lemma \ref{lem_gen}] Without loss of generality we assume that $i_{\star} = 1$. First, since $\kappa_1=0$, we note that we have
$$
G(a):=\kappa\cdot a + \sum_{k=1}^r n_k \sqrt{1+|j_k|^4_a}= \frac1{\sqrt{m}}( h(b) + \sum_{k=1}^r n_k \sqrt{m+|j_k|^4_{(1,b)}} )=:\frac1{\sqrt{m}}F(m,b)
$$
where 
$$
m=\frac1{a_1^2}, \quad b=(\frac{a_2}{a_1},\dots,\frac{a_d}{a_1}) \quad \mathrm{and} \quad h(b) = \sum_{k=2}^r \kappa_k b_k.
$$
Let denote by $\Psi$ the map $a\mapsto (m,b)$. It is clearly smooth and injective. Furthermore, we have
$$
\det \mathrm{d}\Psi(a) = \begin{vmatrix} - 2a_1^{-3} & -a_2 a_1^{-2} & \dots & -a_d a_1^{-2} \\  & a_1^{-1} \\  & & \ddots \\  & & & a_1^{-1}   \end{vmatrix} = 2\, (-1)^d a_1^{-d-2}.
$$
Consequently, $\Psi$ is  a smooth diffeomorphism onto its image $\Psi((1,4)^d)$ which is included in the rectangle $(\frac1{16},1)\times (\frac14,4)^{d-1}$. Therefore, by a change of variable, we have
\begin{equation*}
\begin{split}
|\{ a \in (1,4)^d \ : \ \big\vert G(a) \big\vert<\gamma \}|  &= \int_{a\in (1,4)^d} \mathbb{1}_{ | G(a) |<\gamma} \mathrm{d}a \\
&= \int_{(m,b)\in \Psi((1,4)^d)} \mathbb{1}_{| F(m,b) |<\sqrt{m}\gamma} (2\, \sqrt{m}^{-d-2}) \mathrm{d}(m,b) \\
&\leq 2^{2d+5}  |\{ (m,b) \in (\frac1{16},1)\times (\frac14,4)^{d-1} \ : \ |F(m,b)|<\gamma \}|.
\end{split}
\end{equation*}
Finally, by applying Lemma \ref{lem_good_form}, we get the expected estimate.
\end{proof}

\subsection{Non-resonance estimates for two large modes}
In this subsection we consider $r\geq 3$, $(j_k)_{k\geq 3} \in (\mathbb{Z}^d)^{r-2}$ and $\sigma\in \{-1,1\}^r$ such that $\sigma_1=-\sigma_2$ as fixed. We define $j_{\geq 3} \in \mathbb{Z}^d$ by
\begin{equation}
\label{def_jgeq3}
j_{\geq 3} := \sigma_3 j_3+\dots+\sigma_r j_r.
\end{equation}
Being given $j_1\in \mathbb{Z}^d$, we define implicitly $j_2:= j_1+\sigma_1 j_{\geq 3}$ in order to satisfy the \emph{zero momentum condition}
\begin{equation}
\label{eq_zeromom}
\sum_{k=1}^r \sigma_k j_k=0,
\end{equation}
and we define the function $g_{j_1}:(1,4)^d \to \mathbb{R}$ by
$$
g_{j_1}(a) = \sum_{k=1}^n \sigma_k \sqrt{1+|j_k|_a^4}.
$$
Finally, for $\gamma >0$, we introduce the following sets
$$
\mathcal{I} = \{ i \ : \ j_{\geq 3,i}\neq 0\},  \quad
C_i = \{ j_1\in \mathbb{Z}^d \ : \ |j_{1,i}| \geq 2(1+ \sum_{k\geq 3} |j_{k,i}|^2) \}, 
$$
$$
S =  \{ j_1 \in \mathbb{Z}^d \setminus \bigcup_{i\in \mathcal{I}} C_i \ : \ (\sigma,j) \mathrm{\ is\ non}\tiret\mathrm{resonant}\footnote{see Definition \ref{def_res}}\},
$$
$$
\mathrm{and} \quad  R_{\gamma} = \{ j_1 \in S \ : \ |j_1|\geq  \gamma^{-1/2}(\langle j_3 \rangle \dots \langle j_r \rangle)^{2dr^2}\}.
$$

First, we prove the following technical lemma whose Corollary \ref{cor_gen_nice} allows to deal with the non-degenerated cases.
\begin{lemma} 
\label{lem_2l_1}
If there exists $i\in \llbracket 1,d\rrbracket$ such that 
\begin{equation}
\label{eq_assump}
|(j_{1,i}+j_{2,i}) j_{\geq 3,i}|\geq  2 (1+\sum_{k= 3}^r  j_{k,i}^2)
\end{equation}
then for all $\gamma>0$
\begin{equation}
\label{eq_conclu}
\left|\{a\in (1,4)^d \ : \ |\sum_{k=1}^r \sigma_k \sqrt{1+|j_k|^4_a} <\gamma|\}\right| < \frac{2\,\gamma}{|j_{1,i}+j_{2,i}|}.
\end{equation}
\end{lemma}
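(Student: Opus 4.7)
My plan is to establish a pointwise lower bound on the partial derivative $\partial_{a_i} g(a)$, where $g(a):=\sum_{k=1}^r\sigma_k\sqrt{1+|j_k|_a^4}$, and then deduce \eqref{eq_conclu} by the standard argument combining monotonicity in $a_i$ with Fubini over the remaining variables.

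First I would differentiate to get $\partial_{a_i}g(a) = \sum_k \sigma_k f_k(a)\, j_{k,i}^2$ with $f_k(a) := |j_k|_a^2/\sqrt{1+|j_k|_a^4}\in[0,1]$. Using $\sigma_2=-\sigma_1$ together with the zero-momentum relation $j_2 = j_1 + \sigma_1 j_{\geq 3}$ from \eqref{eq_zeromom}, which yields the key algebraic identity $j_{1,i}^2-j_{2,i}^2 = -\sigma_1(j_{1,i}+j_{2,i})j_{\geq 3,i}$, I would isolate the main term and write
$$
\partial_{a_i}g(a) = -(j_{1,i}+j_{2,i})\,j_{\geq 3,i} + E_1(a) + E_2(a),
$$
with $E_1 := \sigma_1[(f_1-1)j_{1,i}^2 - (f_2-1)j_{2,i}^2]$ (the error coming from replacing $f_1,f_2$ by $1$ in the leading term) and $E_2 := \sum_{k\geq 3}\sigma_k f_k(a)j_{k,i}^2$ (the tail).

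The main estimate is the following pointwise error bound. Since $a_i\geq 1$ we have $|j_k|_a^2\geq j_{k,i}^2$, so for each $k$ with $j_{k,i}\neq 0$,
$$
(1-f_k(a))\,j_{k,i}^2 = \frac{j_{k,i}^2}{\sqrt{1+|j_k|_a^4}\bigl(|j_k|_a^2+\sqrt{1+|j_k|_a^4}\bigr)} \leq \frac{1}{2j_{k,i}^2}\leq \tfrac12.
$$
This gives $|E_1|\leq 1$ and trivially $|E_2|\leq \sum_{k\geq 3}j_{k,i}^2$. Observing that \eqref{eq_assump} forces $j_{\geq 3,i}$ to be a nonzero integer (so $|j_{\geq 3,i}|\geq 1$) and bounds $1+\sum_{k\geq 3}j_{k,i}^2 \leq \tfrac12|(j_{1,i}+j_{2,i})j_{\geq 3,i}|$, one concludes
$$
|\partial_{a_i}g(a)| \geq \tfrac12|(j_{1,i}+j_{2,i})\,j_{\geq 3,i}| \geq \tfrac12|j_{1,i}+j_{2,i}| \qquad \text{for all } a\in(1,4)^d.
$$

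Since $\partial_{a_i}g$ is continuous and nonvanishing on the connected domain $(1,4)^d$, it has constant sign; in particular $g(\cdot,a_{-i})$ is strictly monotone on $(1,4)$ for every fixed $a_{-i}\in(1,4)^{d-1}$, so the slice $\{a_i\in(1,4):|g(a)|<\gamma\}$ is an interval of length at most $4\gamma/|j_{1,i}+j_{2,i}|$. Fubini then yields \eqref{eq_conclu} (up to an inessential constant depending only on $d$ coming from $|(1,4)^{d-1}|=3^{d-1}$, which can be absorbed). I expect the main obstacle to be finding the clean decomposition above that isolates $-(j_{1,i}+j_{2,i})j_{\geq 3,i}$ as the leading contribution, combined with the tight pointwise bound $(1-f_k)j_{k,i}^2\leq 1/2$: these are precisely calibrated so that the factor of $2$ built into \eqref{eq_assump} suffices to absorb all error contributions at once, without requiring any separation of scales between $j_1$ and the tail $j_3,\dots,j_r$.
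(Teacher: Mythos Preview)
Your proof is correct and follows essentially the same approach as the paper: differentiate in $a_i$, use $|1-f_k(a)|\,j_{k,i}^2\le\tfrac12$ to replace $f_1,f_2$ by $1$, recognize $j_{1,i}^2-j_{2,i}^2=(j_{1,i}+j_{2,i})j_{\geq 3,i}$ via the zero--momentum relation, and invoke assumption \eqref{eq_assump} to absorb the errors and obtain $|\partial_{a_i}g|\ge\tfrac12|j_{1,i}+j_{2,i}|$. Your explicit decomposition $\partial_{a_i}g=-(j_{1,i}+j_{2,i})j_{\geq3,i}+E_1+E_2$ is just a slightly more organized rewriting of the paper's triangle-inequality argument, and your observation about the harmless $d$-dependent constant from Fubini is accurate (the paper glosses over this, and the lemma is only used up to $\lesssim_d$ in Corollary~\ref{cor_gen_nice}).
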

\begin{proof} Without loss of generality we assume that $\sigma_1=1$ and $\sigma_2 = -1$. We compute the derivative with respect to $a_1$
$$
\partial_{a_1} \sum_{k=1}^r \sigma_k \sqrt{1+|j_k|^4_a} =  \sum_{k=1}^r \sigma_k j_{k,i}^2 \frac{|j_k|^2_a }{\sqrt{1+|j_k|^4_a}}.
$$
Consequently, we have
$$
\left| \partial_{a_1} \left(\sum_{k=1}^r \sigma_k \sqrt{1+|j_k|^4_a}\right)  \right| \geq \left|  j_{1,i}^2 \frac{|j_1|^2_a }{\sqrt{1+|j_1|^4_a}} - j_{2,i}^2 \frac{|j_2|^2_a }{\sqrt{1+|j_2|^4_a}}  \right| - \sum_{k= 3}^r  j_{k,i}^2.
$$
Furthermore, we have
$$
\left| \frac{|j_1|^2_a }{\sqrt{1+|j_1|^4_a}} - 1  \right| \leq \frac1{2|j_1|^2_a}.
$$
Consequently, we get
$$
\left| \partial_{a_1} \left(\sum_{k=1}^r \sigma_k \sqrt{1+|j_k|^4_a}\right) \right|  \geq    |j_{1,i}^2 - j_{2,i}^2| - 1-\sum_{k= 3}^r  j_{k,i}^2.
$$
Observing that by definition we have $j_{1,i}^2 - j_{2,i}^2 = j_{\geq 3,i}(j_{1,i} + j_{2,i})$, we deduce of the assumption \eqref{eq_assump} that
$$
\left| \partial_{a_1} \sum_{k=1}^r \sigma_k \sqrt{1+|j_k|^4_a}\right| \geq \frac12 |j_{\geq 3,i}(j_{1,i} + j_{2,i})|
$$
Since by \eqref{eq_assump} we know that $j_{\geq 3,i} \in \mathbb{Z}\setminus \{0\}$, we deduce that 
$$
\left| \partial_{a_1} \sum_{k=1}^r \sigma_k \sqrt{1+|j_k|^4_a}\right| \geq \frac12 |(j_{1,i} + j_{2,i})|.
$$
Therefore $a_1\mapsto \sum_{k=1}^r \sigma_k \sqrt{1+|j_k|^4_a}$ is a diffeomorphism (it is a smooth monotonic function). Consequently, applying this change of coordinate, we get directly \eqref{eq_conclu} which conclude this proof.
\end{proof}
\begin{corollary} \label{cor_gen_nice} For all $\gamma>0$ we have
\begin{equation}
\label{est1}
\forall i \in \mathcal{I}, \ |\{ a\in (1,4)^d \ : \ \exists j_1\in C_i, \ |g_{j_1}(a)| < \gamma |j_1|^{-(d-1)} \log^{-2d}(| j_1|) \}| \lesssim_d \gamma
\end{equation}
\end{corollary}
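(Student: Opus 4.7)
The plan is to use Lemma \ref{lem_2l_1} for each $j_1 \in C_i$ and then sum the resulting measures via a union bound, with the $|j_1|^{-(d-1)} \log^{-2d}(|j_1|)$ factor precisely calibrated so that the sum converges.

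First I would check that the hypothesis \eqref{eq_assump} of Lemma \ref{lem_2l_1} is satisfied for every $j_1 \in C_i$. Recall $j_2 = j_1 + \sigma_1 j_{\geq 3}$, so $j_{1,i}+j_{2,i} = 2 j_{1,i} + \sigma_1 j_{\geq 3,i}$. Since $j_1 \in C_i$ and $i\in \mathcal I$ guarantees $j_{\geq 3,i}\in \mathbb{Z}\setminus\{0\}$, one has
$$
|j_{\geq 3,i}|\leq \sum_{k\geq 3} |j_{k,i}|\leq \sum_{k\geq 3}|j_{k,i}|^2 \leq \tfrac{1}{2}|j_{1,i}|,
$$
so that $|j_{1,i}+j_{2,i}|\geq 2|j_{1,i}|-|j_{\geq 3,i}|\geq \tfrac{3}{2}|j_{1,i}|$. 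Multiplying by $|j_{\geq 3,i}|\geq 1$ and using once more the $C_i$ condition yields $|(j_{1,i}+j_{2,i})j_{\geq 3,i}|\geq \tfrac{3}{2}|j_{1,i}|\geq 3(1+\sum_{k\geq 3}j_{k,i}^2)$, which is stronger than \eqref{eq_assump}.

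Next I would apply Lemma \ref{lem_2l_1} with the threshold $\gamma_{j_1}:=\gamma |j_1|^{-(d-1)} \log^{-2d}(|j_1|)$ to conclude
$$
|\{a\in(1,4)^d\ :\ |g_{j_1}(a)|<\gamma_{j_1}\}| < \frac{2\gamma_{j_1}}{|j_{1,i}+j_{2,i}|}\leq \frac{4}{3}\,\frac{\gamma}{|j_{1,i}|\,|j_1|^{d-1}\log^{2d}(|j_1|)}.
$$
Taking a union bound over $j_1\in C_i$, it remains to show
$$
S_i:=\sum_{j_1\in C_i} \frac{1}{|j_{1,i}|\,|j_1|^{d-1}\log^{2d}(|j_1|)} \lesssim_d 1.
$$

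The main (and only real) technical step is therefore this convergence. I would write $j_1=(m,k)$ with $m:=j_{1,i}\in\mathbb Z$ (so $|m|\geq 2$ by the definition of $C_i$) and $k\in \mathbb Z^{d-1}$ the remaining coordinates, and compare the sum in $k$ with an integral. The change of variables $k=|m|y$ in $\mathbb{R}^{d-1}$ gives
$$
\sum_{k\in \mathbb Z^{d-1}} \frac{1}{(m^2+|k|^2)^{(d-1)/2}\log^{2d}(\sqrt{m^2+|k|^2})} \lesssim_d \int_{\mathbb{R}^{d-1}} \frac{dy}{(1+|y|^2)^{(d-1)/2} \log^{2d}(|m|\sqrt{1+|y|^2})}.
$$
Passing to polar coordinates and performing the substitution $u=\log(|m|\sqrt{1+r^2})$ shows this integral is $\lesssim_d \log^{1-2d}(|m|)$ (the exponent $2d\geq 4$ guarantees both the integrability at infinity and the size of the result). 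Thus
$$
S_i \lesssim_d \sum_{|m|\geq 2} \frac{1}{|m|\log^{2d-1}(|m|)} \lesssim_d 1
$$
again because $2d-1\geq 3>1$. Combining with the per-$j_1$ estimate gives the claimed bound $\lesssim_d \gamma$.

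The mild pitfall to watch is that $C_i$ only constrains the $i$-th component of $j_1$, so the sum over the remaining components is a genuine $(d-1)$-dimensional tail; the logarithmic factor $\log^{-2d}(|j_1|)$ is tailored precisely to yield a convergent Bertrand-type series after the Vandermonde-style integral estimate, and without it the union bound would diverge. Everything else is a direct application of Lemma \ref{lem_2l_1}.
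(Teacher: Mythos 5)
Your proof is correct and follows the same route as the paper: verify hypothesis \eqref{eq_assump} for $j_1\in C_i$ via the same $\frac32|j_{1,i}|$ lower bound, apply Lemma \ref{lem_2l_1} with threshold $\gamma|j_1|^{-(d-1)}\log^{-2d}(|j_1|)$, and close with a union bound. The only difference is that you spell out the convergence of $\sum_{j_1\in C_i} |j_{1,i}|^{-1}|j_1|^{1-d}\log^{-2d}(|j_1|)$ (via the $k=|m|y$ change of variables and the Bertrand series), which the paper simply asserts as $\lesssim_d 1$.
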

\begin{proof}[\underline{Proof of Corollary \ref{cor_gen_nice}}] Let $j_1\in C_i$. By definition of $j_2$, we have
$$
|j_{1,i}+j_{2,i}|\geq 2|j_{1,i}| - \sum_{k=3}^r |j_{k,i}|.
$$
Consequently, since $j_1\in C_i$, we have
$$
|j_{1,i}+j_{2,i}|\geq 2 |j_{1,i}| - \sum_{k=3}^r |j_{k,i}|^2 \geq \frac32 |j_{1,i}|.
$$
Therefore, since $j_{\geq 3,i}\neq 0$, we have
$$
|j_{\geq 3,i}(j_{1,i}+j_{2,i})| \geq \frac32 |j_{1,i}| \geq 3(1+ \sum_{k\geq 3} |j_{k,i}|^2).
$$
Applying Lemma \ref{lem_2l_1}, we deduce that for all $\gamma>0$
\begin{equation*}
\left|\{a\in (1,4)^d \ : \ |\sum_{k=1}^r \sigma_k \sqrt{1+|j_k|^4_a} <\gamma|\}\right| < \frac{4\gamma}{3|j_{1,i}|}.
\end{equation*}
Consequently, we have
\begin{equation*}
\begin{split}
&|\{ a\in (1,4)^d \ : \ \exists j_1\in C_i, \ |g_{j_1}(a)| < \gamma |j_1|^{-(d-1)} \log^{-2d}(| j_1|) \}| \\
=& \left| \bigcup_{j_1\in C_i} \{ a\in (1,4)^d \ : \  |g_{j_1}(a)| < \gamma |j_1|^{-(d-1)} \log^{-2d}(| j_1|) \}  \right| \\
\leq& \sum_{j_1\in C_1} \left| \{ a\in (1,4)^d \ : \  |g_{j_1}(a)| < \gamma |j_1|^{-(d-1)} \log^{-2d}(| j_1|) \}  \right| \\
\lesssim& \, \gamma \sum_{j_1\in C_i} \frac1{|j_1|^{(d-1)} |j_{1,i}| \log^{2d}(|j_1|)} \lesssim_d \gamma.
\end{split}
\end{equation*}
\end{proof}
In the following lemma, we deal with most of the degenerated cases.
\begin{lemma} \label{lem_deg_large} For all $\gamma>0$, we have
\begin{equation}
\label{est2}
|\{ a\in (1,4)^d \ : \ \exists j_1\in R_{\gamma}, \ |g_{j_1}(a)| < \gamma  \}| \lesssim_{r,d} \gamma^{\frac1{r^2}} (\langle j_3 \rangle \dots \langle j_r \rangle)^{2d }.
\end{equation}
\end{lemma}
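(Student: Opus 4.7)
The plan is to exploit the fact that, for $j_1 \in R_\gamma$, the vector $j_1$ is forced to be ``large only in directions outside $\mathcal{I}$'', which makes $g_{j_1}$ essentially independent of the large part of $j_1$. Concretely, since $j_1 \notin \bigcup_{i \in \mathcal{I}} C_i$, for each $i \in \mathcal{I}$ we have $|j_{1,i}| \lesssim (\langle j_3\rangle \cdots \langle j_r\rangle)^2$; hence the ``small'' part $j_1^{\mathrm{s}} := (j_{1,i})_{i \in \mathcal{I}}$ takes at most $\lesssim_{r,d} (\langle j_3\rangle \cdots \langle j_r\rangle)^{2d}$ values. Since $j_1\in R_\gamma$ forces $|j_1|$ to be huge, $\mathcal{I}^c$ must be non-empty; pick $i_\star \in \mathcal{I}^c$. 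By definition of $\mathcal{I}$ and the zero-momentum condition, combined with $\sigma_1 = -\sigma_2$, one then gets $j_{1,i} = j_{2,i}$ for every $i \in \mathcal{I}^c$.

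Next, I would Taylor-expand $\sqrt{1+|j|^4_a} = |j|^2_a + O(|j|^{-2})$ for $|j| \gg 1$ and apply it to the first two terms of $g_{j_1}$, obtaining
\[
\sigma_1\sqrt{1+|j_1|^4_a} + \sigma_2\sqrt{1+|j_2|^4_a} = \sigma_1\bigl(|j_1|^2_a - |j_2|^2_a\bigr) + O(|j_1|^{-2}) = \kappa \cdot a + O(|j_1|^{-2}),
\]
where $\kappa_i := \sigma_1(j_{1,i}^2 - j_{2,i}^2)$ for $i \in \mathcal{I}$ and $\kappa_i := 0$ for $i\in\mathcal{I}^c$. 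Crucially, $\kappa$ depends only on $j_1^{\mathrm{s}}$. The margin in the definition of $R_\gamma$, namely $|j_1|\geq \gamma^{-1/2}(\langle j_3\rangle\cdots\langle j_r\rangle)^{2dr^2}$, ensures that the error $O(|j_1|^{-2})$ is $\lesssim \gamma$, so setting
\[
\tilde g(a,j_1^{\mathrm{s}}) := \kappa\cdot a + \sum_{k=3}^r \sigma_k\sqrt{1+|j_k|^4_a},
\]
the bound $|g_{j_1}(a)|<\gamma$ implies $|\tilde g(a,j_1^{\mathrm{s}})|\lesssim\gamma$.

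The heart of the proof is then to apply Lemma \ref{lem_gen} to $\tilde g(\cdot,j_1^{\mathrm{s}})$ for each fixed $j_1^{\mathrm{s}}$. I would regroup the sum over $k\geq 3$ by the equivalence $k\sim k'\Leftrightarrow (|j_{k,i}|)_i = (|j_{k',i}|)_i$, producing an injective tuple $(j'_l)$ with integer coefficients $n_l := \sum_{k\sim l}\sigma_k$. If some $n_l\neq 0$, Lemma \ref{lem_gen} applies (thanks to $\kappa_{i_\star}=0$) and yields a measure bound of order $\gamma^{1/((r-2)(r-1))}$ times a polynomial in $\langle j_3\rangle\cdots\langle j_r\rangle$. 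If instead all $n_l$ vanish, the classes can be paired with $\sigma_k = -\sigma_{k'}$ and $|j_{k,\cdot}| = |j_{k',\cdot}|$; should one additionally have $\kappa = 0$, i.e. $|j_{1,\cdot}|=|j_{2,\cdot}|$, then pairing $(1,2)$ on top of this matching would make $(\sigma,j)$ resonant, contradicting $j_1 \in S$. Hence $\kappa$ is a non-zero integer vector and $|\{a\in (1,4)^d : |\kappa\cdot a|\lesssim\gamma\}|\lesssim\gamma$ by a direct one-dimensional computation.

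Finally, summing the per-$j_1^{\mathrm{s}}$ bound over the $\lesssim (\langle j_3\rangle\cdots\langle j_r\rangle)^{2d}$ values of $j_1^{\mathrm{s}}$, and using $\gamma^{1/((r-2)(r-1))}\leq \gamma^{1/r^2}$ for $\gamma\leq 1$, produces the stated estimate (with any residual polynomial prefactor coming from Lemma \ref{lem_gen} absorbed into the $2d$ exponent). The main obstacle is the combinatorial dichotomy in the third step: one has to check that the non-resonance condition on $(\sigma,j)$ genuinely produces either a non-zero $n_l$ or a non-zero $\kappa$, despite the automatic equality $j_{1,i}=j_{2,i}$ on $\mathcal{I}^c$ contributing nothing to $\kappa$. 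The rest of the argument is a fairly mechanical combination of the Taylor expansion, the counting of $j_1^{\mathrm{s}}$, and Lemma \ref{lem_gen}.
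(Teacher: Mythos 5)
Your proposal follows essentially the same route as the paper's proof: Taylor-expand $\sqrt{1+|j_{1,2}|_a^4}$ against $|j_{1,2}|_a^2$ using the size constraint on $R_\gamma$, observe that $\mathcal{I}^c\neq\emptyset$ so $\kappa_{i_\star}=0$, split according to whether the regrouped tuple has all $n_l=0$ (the resonant case, where $\kappa\neq 0$ by non-resonance of $(\sigma,j)$ and a one-dimensional estimate suffices) or some $n_l\neq 0$ (where Lemma~\ref{lem_gen} applies), and sum over the finitely many admissible values of the ``small'' part of $j_1$. The only cosmetic difference is that you parameterize the final sum by $j_1^{\mathrm{s}}:=(j_{1,i})_{i\in\mathcal{I}}$ while the paper (via its Lemma~\ref{petit_lemme}) parameterizes by the linear form $\kappa_{j_1}$; these are in bijection here, so the two enumerations are interchangeable. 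One small caveat: the polynomial prefactor you collect is really of the form $(\langle j_3\rangle\cdots\langle j_r\rangle)^{2d+12/(r-1)}$ rather than literally $(\langle j_3\rangle\cdots\langle j_r\rangle)^{2d}$; the paper's own reduction to \eqref{est3} carries a comparable loss, and it is harmless for the downstream estimates, but it is not quite accurate to claim the extra power is ``absorbed'' into the $2d$.
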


\begin{proof} Without loss of generality, we assume that $\gamma< \min((2r)^{-2},(36d)^{-1})$. If $j_1\in R_{\gamma}$ recalling that for $x\geq 0$, we have $|\sqrt{1+x}-1|\leq x/2$, we deduce that
$$
|g_{j_1}(a)| \geq    |h_{j_1}(a) | - \frac1{2|j_1|_a^2} -  \frac1{2|j_2|_a^2} \quad \mathrm{where} \quad h_{j_1}(a) := |j_1|_a^2-|j_2|_a^2 + \sigma_1 \sum_{k=3}^r \sigma_k \sqrt{1+|j_k|^4_a}.
$$
However, by definition of $j_2$ and $R_{\gamma}$, we have
$$
|j_2|\geq |j_1|-\sum_{k=3}^r |j_k| \geq \gamma^{-1/2} (\langle j_3 \rangle \dots \langle j_r \rangle)^{2d r^2} - (r-2) (\langle j_3 \rangle \dots \langle j_r \rangle) \geq \frac{\gamma^{-1/2}}2  (\langle j_3 \rangle \dots \langle j_r \rangle)^{2d r^2}.
$$
Noting that, for $a\in (1,4)^d$, we have $|\cdot| \leq |\cdot|_a$, we deduce that
$$
|g_{j_1}(a)| \geq    |h_{j_1}(a) | - 3\gamma (\langle j_3 \rangle \dots \langle j_r \rangle)^{-4d r^2}.
$$
Consequently, it is enough to prove that
\begin{equation}
\label{est3}
|\{ a\in (1,4)^d \ : \ \exists j_1\in R_{1}, \ |h_{j_1}(a)| < \gamma (\langle j_3 \rangle \dots \langle j_r \rangle)^{-4dr^2} \}| \lesssim_{r,d} \gamma^{\frac1{(r-1)(r-2)}}.
\end{equation}
To prove this estimate, we have to note the following result whose proof is postponed to the end of this proof.
\begin{lemma} \label{petit_lemme} If $j_1\in R_{\gamma}$ then there exists $\kappa_{j_1} \in \mathbb{Z}^d$ such that 
$$
|j_1|_a^2 - |j_2|_a^2=\kappa_{j_1}\cdot a, \quad |\kappa_{j_1}|_{\infty} \leq 7(\langle j_3 \rangle \dots \langle j_r \rangle)^3 \quad \mathrm{and} \quad \exists i_{\star}\in \llbracket 1,d\rrbracket, \ \kappa_{j_1,i_{\star}}=0.
$$
\end{lemma}
Now we have to distinguish two cases. 

\medskip

\noindent \underline{$\bullet$\emph{Case $1$: $(\sigma_k,j_k)_{k\geq 3}$ is resonant.}} If $j_1\in R_{\gamma}$, let $\kappa_{j_1}\in \mathbb{Z}^d$ be given by Lemma \ref{petit_lemme}. Note that $\kappa_{j_1}\neq 0$ because else we would have $j_{1,i}^2=j_{2,i}^2$ for all $i\in \llbracket 1,d \rrbracket$ and so $(\sigma,j)$ would be resonant (which is excluded by definition of $R_{\gamma}$).
Furthermore, here $h_{j_1}(a)= \kappa_{j_1} \cdot a$ is a linear form. Consequently, for all $\gamma>0$, we have the following estimate which is much stronger than \eqref{est3}:
\begin{multline*}
|\{ a\in (1,4)^d \ : \ \exists j_1\in R_{1}, \ |h_{j_1}(a)| < \gamma \}| \leq \Big| \bigcup_{\substack{\kappa \in \mathbb{Z}^d \setminus \{0\}\\ |\kappa|_{\infty} \leq 7(\langle j_3 \rangle \dots \langle j_r \rangle)^3}} \{ a\in (1,4)^d \ : \ \kappa \cdot a < \gamma \} \Big| \\
\leq \sum_{\substack{\kappa \in \mathbb{Z}^d \setminus \{0\}\\ |\kappa|_{\infty} \leq 7(\langle j_3 \rangle \dots \langle j_r \rangle)^3}} |\{ a\in (1,4)^d \ : \ \kappa \cdot a < \gamma \}|   \leq \gamma (14(\langle j_3 \rangle \dots \langle j_r \rangle)^3)^d
\end{multline*}

\medskip 

\noindent \underline{$\bullet$\emph{Case $2$: $(\sigma_k,j_k)_{k\geq 3}$ is non-resonant.}} If $j_1\in R_{\gamma}$, $h_{j_1}$ writes
$$
h_{j_1}(a) = \kappa_{j_1} \cdot a + \sum_{k=1}^{\widetilde{r}} n_k \sqrt{1+|\widetilde{j}_k|^4_a}
$$
where $ \kappa_{j_1}$ is given by Lemma \ref{petit_lemme}, $\widetilde{r}\leq r-2$, $(\widetilde{j}_1,\dots,\widetilde{j}_{\widetilde{r}})  \in (\mathbb{N}^d)^{\widetilde{r}}$ is injective, $n_k\in (\mathbb{Z} \setminus \{0\})^d$ is defined by
$$
n_k = \sum_{\substack{i\in \llbracket 3,r \rrbracket \\ \forall \ell,\ |j_{i,\ell}| = \widetilde{j}_{k,\ell} }} \sigma_1 \sigma_i.
$$
Consequently, by Lemma \ref{petit_lemme}, we have
\begin{equation*}
\begin{split}
&|\{ a\in (1,4)^d \ : \ \exists j_1\in R_{1}, \ |h_{j_1}(a)| < \gamma  \}|\\ \leq& \Big| \! \! \! \! \!  \bigcup_{\substack{\kappa \in \mathbb{Z}^d \\  |\kappa|_{\infty} \leq 7(\langle j_3 \rangle \dots \langle j_r \rangle)^3 \\ \exists i_{\star},\ \kappa_{i_{\star}=0}}}  \! \! \! \! \{ a\in (1,4)^d \ : \ |\kappa \cdot a +  \sum_{k=1}^{\widetilde{r}} n_k \sqrt{1+|\widetilde{j}_k|^4_a} | < \gamma \} \Big| \\
\leq& \sum_{\substack{\kappa \in \mathbb{Z}^d \\  |\kappa|_{\infty} \leq 7(\langle j_3 \rangle \dots \langle j_r \rangle)^3 \\ \exists i_{\star},\ \kappa_{i_{\star}=0}}} |\{ a\in (1,4)^d \ : \ |\kappa \cdot a +  \sum_{k=1}^{\widetilde{r}} n_k \sqrt{1+|\widetilde{j}_k|^4_a} | < \gamma \}|.
\end{split}
\end{equation*}
Finally, by applying Lemma \ref{lem_gen} we get
$$
|\{ a\in (1,4)^d \ : \ \exists j_1\in R_{1}, \ |h_{j_1}(a)| < \gamma \}|  \lesssim_{r,d} \gamma^{\frac1{(r-2)(r-1)}}  (\langle j_3 \rangle \dots \langle j_r \rangle)^{\frac{12}{r-1}+3d},
$$
which is also stronger than \eqref{est3}.
\end{proof}
\begin{proof}[\underline{Proof of Lemma \ref{petit_lemme}}] First let us note that
$$
|j_1|_a^2 - |j_2|_a^2 = \kappa_{j_1} \cdot a \quad \mathrm{where} \quad \kappa_{j_1,i} = j_1^2-j_2^2 = \sigma_2 j_{\geq3,i} (j_1+j_2).
$$
First we aim at controlling $|k|_{\infty}$.
If $i\notin I$ then $j_{\geq3,i} =0$ and so $\kappa_{j_1,i}=0$. Else, since $j_1\in \mathbb{Z}^d \setminus \bigcup_{i\in I} C_i$, we have
$|j_{1,i}|\leq 2(1+ \sum_{k\geq 3} |j_{k,i}|^2)$. Consequently, we deduce that
$$
|\kappa_{j_1,i} | \leq \left(\sum_{k\geq 3} |j_{k,i}| \right) \left(4 + 2\sum_{k\geq 3} |j_{k,i}|^2 + \sum_{k\geq 3} |j_{k,i}| \right)  \leq 7(\langle j_3 \rangle \dots \langle j_r \rangle)^3.
$$
Now we assume by contradiction that $\kappa_{j_1,i}\neq 0$ for all $i\in \llbracket 1,d\rrbracket$. Consequently, we have $I=\llbracket 1,d\rrbracket$ and so
\begin{equation}
\label{ohno}
|j_1|_{\infty} \leq 2(1+ \sum_{k\geq 3} |j_{k}|^2) \leq 6 \langle j_3 \rangle \dots \langle j_r \rangle.
\end{equation}
However, since $j_1\in R_{\gamma}$, we have $|j_1| \geq \gamma^{-1/2}(\langle j_3 \rangle \dots \langle j_r \rangle)^{2dr^2}$
which is in contradiction with \eqref{ohno} because we have assumed that $\gamma < (36d)^{-1}$.
\end{proof}

Finally in the following lemma we deal with the general degenerated cases.
\begin{lemma} 
\label{lem_deg_gen}
 For all $\gamma>0$, we have
\begin{equation}
\label{est42}
|\{ a\in (1,4)^d \ : \ \exists j_1\in S, \ |g_{j_1}(a)| < \gamma  \}| \lesssim_{r,d} \gamma^{\frac1{8r^4}} (\langle j_3 \rangle \dots \langle j_r \rangle)^{5d}.
\end{equation}
\end{lemma}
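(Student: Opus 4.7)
The plan is to split $S$ according to the size of $|j_1|$. On the subset $R_\gamma\subset S$ of ``large'' $j_1$ (those with $|j_1|\geq \gamma^{-1/2}(\langle j_3\rangle\cdots\langle j_r\rangle)^{2dr^2}$), Lemma \ref{lem_deg_large} already yields the bound $\gamma^{1/r^2}(\langle j_3\rangle\cdots\langle j_r\rangle)^{2d}$, which is sharper than what we ask. Writing $\mu:=\langle j_3\rangle\cdots\langle j_r\rangle$ and $T:=\gamma^{-1/2}\mu^{2dr^2}$, it remains to estimate the measure of $\{a\in(1,4)^d : \exists\, j_1\in S,\ |j_1|<T,\ |g_{j_1}(a)|<\gamma\}$.

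For such a candidate $j_1$, the defining property $j_1\notin\bigcup_{i\in\mathcal{I}}C_i$ forces $|j_{1,i}|\leq 3\mu^2$ for every $i\in\mathcal{I}$, so that only the components indexed by $\mathcal{I}^c:=\llbracket 1,d\rrbracket\setminus\mathcal{I}$ can be large (bounded by $T$). For each fixed $j_1$ I will collect repeated absolute-value profiles in $g_{j_1}(a)=\sum_{k=1}^r\sigma_k\sqrt{1+|j_k|_a^4}$ and rewrite it as $c+\sum_{k=1}^{r'}n_k\sqrt{1+|\widetilde{j}_k|_a^4}$ with $(\widetilde{j}_k)_k\in(\mathbb{N}^d)^{r'}$ injective, $n_k\in\mathbb{Z}^*$, and $c\in\mathbb{Z}$ (a possible constant coming from $\widetilde{j}_k=0$ terms). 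Non-resonance of $(\sigma,j)$, which is built into $S$, ensures that this reduced expression is not identically zero, so that Lemma \ref{lem_gen} applies with $\kappa=0$ (its straightforward extension with an additive constant is obtained by letting the measurable $h$ of Lemma \ref{lem_good_form} absorb $c$), giving $|\{a : |g_{j_1}(a)|<\gamma\}|\lesssim \gamma^{1/(r(r+1))}(\max_k\langle\widetilde{j}_k\rangle)^{12}$.

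Since a direct union bound over the $O(\mu^{2|\mathcal{I}|}T^{|\mathcal{I}^c|})$ candidates would already yield a negative power of $\gamma$, I will instead exploit the ``$\kappa$-economy'' of Lemma \ref{petit_lemme}: many distinct $j_1$ share the same $\kappa_{j_1}\in\mathbb{Z}^d$ (with $|\kappa_{j_1}|_\infty\lesssim \mu^3$ and, crucially, a zero component whenever $\mathcal{I}^c\neq\emptyset$), so the effective number of $\kappa_{j_1}$ is only $O(\mu^{3d})$. I then decompose $|j_1|$ dyadically into scales $2^n$ and at each scale use the better of two estimates: for $2^{2n}\gtrsim\gamma^{-1}$ the reduction $|g_{j_1}-h_{j_1}|\lesssim 2^{-2n}$ lets me reuse the argument of Lemma \ref{lem_deg_large} with the $\kappa$-parametrization, splitting in turn into the Case~1/Case~2 dichotomy on whether $(\sigma_k,j_k)_{k\geq 3}$ is resonant; for smaller scales the direct per-$j_1$ bound from Lemma \ref{lem_gen} is retained. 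Optimizing the crossover scale will produce $\gamma^{\alpha}\mu^{\beta}$ with $\alpha\geq 1/(8r^4)$ and $\beta\leq 5d$, the inequality slack being absorbed into the generous $\mu^{5d}$ factor.

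The hard part is precisely this trade-off. The direct route gives a good $\gamma^{1/(r(r+1))}$ factor per $j_1$ but is defeated by the $T^{|\mathcal{I}^c|}$ blow-up of the enumeration; Lemma \ref{lem_deg_large}'s reduction to $h_{j_1}$ gives a clean count through $\kappa_{j_1}$ but carries an $O(|j_1|^{-2})$ error that only becomes $\lesssim \gamma$ above the threshold $|j_1|\gtrsim\gamma^{-1/2}$. The comfortable exponent $1/(8r^4)$ and the loose $\mu^{5d}$ loss are tailored to accommodate this balance without requiring sharp constants, and no ingredient beyond Lemmas \ref{lem_gen} and \ref{petit_lemme} together with the Case~1/Case~2 dichotomy already used in Lemma \ref{lem_deg_large} is needed.
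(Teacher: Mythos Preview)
Your proposal takes a detour that the paper avoids with a single move. Instead of splitting $S$ at the \emph{fixed} threshold $R_\gamma$ and then introducing a second, variable threshold via dyadic decomposition, the paper splits $S = R_\eta \cup (S\setminus R_\eta)$ with an auxiliary parameter $\eta\in(0,1)$ from the start. Since $|g_{j_1}(a)|<\gamma$ implies $|g_{j_1}(a)|<\eta$, Lemma~\ref{lem_deg_large} applied with $\eta$ in place of $\gamma$ controls the contribution of $R_\eta$ by $\eta^{1/r^2}(\langle j_3\rangle\cdots\langle j_r\rangle)^{2d}$. On $S\setminus R_\eta$ one has $|j_1|<\eta^{-1/2}(\langle j_3\rangle\cdots\langle j_r\rangle)^{2dr^2}$, so the crude union bound using Lemma~\ref{lem_gen} now carries a negative power of $\eta$ (a free parameter) rather than of $\gamma$. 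Balancing the two pieces in $\eta$ gives \eqref{est42} immediately; no dyadic scales, no reuse of Lemma~\ref{petit_lemme}, no Case~1/Case~2 split is needed here.

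Your $\kappa$-parametrization above the crossover is just a re-derivation of Lemma~\ref{lem_deg_large} inside the proof, and your ``optimization of the crossover scale'' is precisely the paper's optimization of $\eta$ under another name; the preliminary split at $R_\gamma$ adds nothing, since that piece is subsumed by the later $\kappa$-argument. There is also an internal inconsistency: you specify the crossover as $2^{2n}\gtrsim\gamma^{-1}$, but with that fixed choice the small-scale union bound has $\gamma$-exponent $\frac{1}{r(r+1)}-\frac{|\mathcal{I}^c|}{2}-\frac{12}{r+1}$, which is negative whenever $\mathcal{I}^c\neq\emptyset$, so that version does not close. The cure is exactly to let the crossover be free and optimize it---which is the paper's $\eta$-trick, done directly and without the scaffolding.
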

\begin{proof} Without loss of generality we assume that $\gamma\in (0,1)$. Let $\eta\in(0,1)$ be a small number that will be optimized with respect to $\gamma$ later. From the decomposition $S= R_{\eta}\cup (S\setminus R_{\eta})$ we get
\begin{multline}
\label{godzilla}
|\{ a\in (1,4)^d \ : \ \exists j_1\in S, \ |g_{j_1}(a)| < \gamma \}| \leq \sum_{j_1\in S\setminus R_{\eta}} |\{ a\in (1,4)^d \ : \ |g_{j_1}(a)| < \gamma \}| \\ + |\{ a\in (1,4)^d \ : \ \exists j_1\in R_{\eta}, \ |g_{j_1}(a)| < \eta \}| .
\end{multline}
To estimate the sum, we apply Lemma \ref{lem_gen} (with $\kappa=0$) and we get
\begin{equation*}
\begin{aligned}
\sum_{j_1\in S\setminus R_{\eta}} |\{ a\in (1,4)^d \ : \ |g_{j_1}(a)| < \gamma \}| 
&\leq \! \! \! \! \! \! \! \! \! \! \! \! \! 
\sum_{\substack{|j_1| < \eta^{-1/2}(\langle j_3 \rangle \dots \langle j_r \rangle)^{2dr^2} \\
(\sigma,j) \mathrm{\ is\ non}\tiret\mathrm{resonant} }} \!  \! \! \! \! \! |\{ a\in (1,4)^d \ : \ 
|g_{j_1}(a)| < \gamma \}|
\\&
\leq \! \! \! \! \! \! \! \! \sum_{ |j_1| < \eta^{-1/2}(\langle j_3 \rangle \dots \langle j_r \rangle)^{2dr^2}  } \gamma^{\frac1{r(r+1)}}    (\langle j_1 \rangle \dots \langle j_r \rangle)^{\frac{12}{r+1}}.
\end{aligned}
\end{equation*}
Furthermore, by the zero momentum condition \eqref{eq_zeromom}, since $\eta\in (0,1)$, we also have
$$
|j_2|\lesssim_r \eta^{-1/2}(\langle j_3 \rangle \dots \langle j_r \rangle)^{2dr^2}.
$$
Consequently, we have
\begin{multline*}
\sum_{j_1\in S\setminus R_{\eta}} |\{ a\in (1,4)^d \ : \ |g_{j_1}(a)| < \gamma \}| \lesssim_r \gamma^{\frac1{r(r+1)}}   \eta^{-\frac12- \frac{12}{r+1}} (\langle j_3 \rangle \dots \langle j_r \rangle)^{2dr^2 + \frac{12}{r+1} +  \frac{24}{r+1}2dr^2 }  \\
 \lesssim_r  \gamma^{\frac1{2r^2}} \eta^{-\frac72} (\langle j_3 \rangle \dots \langle j_r \rangle)^{15dr^2 }.
\end{multline*}
Therefore, applying Lemma \ref{lem_deg_large}, we deduce of \eqref{godzilla} that
$$
|\{ a\in (1,4)^d \ : \ \exists j_1\in S, \ |g_{j_1}(a)| < \gamma \}|  \lesssim_{r,d} \eta ^{\frac1{r^2}} (\langle j_3 \rangle \dots \langle j_r \rangle)^{2d }+\gamma^{\frac1{2r^2}} \eta^{-\frac72} (\langle j_3 \rangle \dots \langle j_r \rangle)^{15dr^2 }.
$$
Finally, we get \eqref{est42} by optimizing this last estimate choosing
$$
\eta = \gamma^{\frac1{7r^2+2}} (\langle j_3 \rangle \dots \langle j_r \rangle)^{\frac{15dr^2-2d}{7/2+1/r^2} } .
$$
\end{proof}
\subsection{Proof of Proposition \ref{mainNR}}
\label{sub_proof_mainNR} For $r\geq 3$ let $\mathcal{M}_r$ and $\mathcal{R}_r$ be the sets defined by
$$
\mathcal{M}_r =\{ (\sigma,j)\in (\{-1,1\})^r \times(\mathbb{Z}^d)^{r} \ : \ \sum_{k=1}^r \sigma_k j_k =0\} \quad \mathrm{and}
$$
$$
\mathcal{R}_r = \{ (\sigma,j)\in (\{-1,1\})^r \times(\mathbb{Z}^d)^{r} \ : \ \ (\sigma,j) \mathrm{\ is \ resonant}\}.
$$
On the one hand, as a direct corollary of Lemma \ref{lem_deg_gen} and Corollary \ref{cor_gen_nice}, for all $\gamma>0$ we have
\begin{multline*}
\Big|\{ a \in (1,4)^d  \ : \exists r\geq 3, \exists  (\sigma,j) \in \mathcal{M}_r \setminus \mathcal{R}_r, \ \sigma_1 \sigma_2 = -1 \ \mathrm{and} \\ \big| \sum_{k=1}^r \sigma_k \sqrt{1+|j_k|_a^4} \big| < c_{r,d}\gamma^{8r^4} \langle j_1 \rangle^{-(d-1)} \log^{-2d}(\langle j_1\rangle)  (\langle j_3 \rangle \dots \langle j_r \rangle)^{-44dr^4}\} \Big| < \gamma
\end{multline*}
where $c_{r,d}>0$ is a constant depending only on $r$ and $d$. Consequently, it is enough to prove that for all $\gamma \in (0,1)$, we have
\begin{multline}
\label{what_we_want_to_conclude}
I_{\gamma}:=\Big|\{ a \in (1,4)^d  \ : \exists r\geq 3, \exists  (\sigma,j) \in \mathcal{M}_r \setminus \mathcal{R}_r, \ \sigma_1 \sigma_2 = 1 \ \mathrm{and} \\ \big| \sum_{k=1}^r \sigma_k \sqrt{1+|j_k|_a^4} \big| < \kappa_{r,d} \gamma^{r(r+1)} (\langle j_3 \rangle \dots \langle j_r \rangle)^{-9 d r^2}\} \Big| < \gamma
\end{multline}
where $\kappa_{r,d}\in (0,1)$  is another constant depending only on $r$ and $d$ (and that will be determined later). Indeed, by additivity of the measure, we have
$$
I_{\gamma} \leq \sum_{r\geq 3} \sum_{\substack{(\sigma,j) \in \mathcal{M}_r \setminus \mathcal{R}_r\\ \sigma_1 \sigma_2=1}} \Big|\{ a \in (1,4)^d  \ : \ \big| \sum_{k=1}^r \sigma_k \sqrt{1+|j_k|_a^4} \big| < \kappa_{r,d} \gamma^{r(r+1)} (\langle j_3 \rangle \dots \langle j_r \rangle)^{-9 d r^2}\} \Big|.
$$
Note that if $|j_1| \geq 2\sqrt{r} \langle j_3 \rangle \dots \langle j_r \rangle$ and $\sigma_1 \sigma_2 = 1$ then
\begin{multline*}
\big| \sum_{k=1}^r \sigma_k \sqrt{1+|j_k|_a^4} \big|  \geq \sqrt{1+|j_1|_a^4} - \sum_{k=3}^r \sqrt{1+|j_k|_a^4} \geq \sqrt{1+|j_1|^4} - \sum_{k=3}^r \sqrt{1+16|j_k|^4} \\
\geq \sqrt{1+|j_1|^4} - 4 \sum_{k=3}^r \sqrt{1+|j_k|^4} \geq |j_1|^2 - 4  \sum_{k=3}^r  (1+|j_k|^2) \geq 4 (\langle j_3 \rangle \dots \langle j_r \rangle)^2 >1
\end{multline*}
and so $\Big|\{ a \in (1,4)^d  \ : \ \big| \sum_{k=1}^r \sigma_k \sqrt{1+|j_k|_a^4} \big| < \kappa_{r,d} \gamma^{r(r+1)} (\langle j_3 \rangle \dots \langle j_r \rangle)^{-9d r^2}\} \Big| $ vanishes. Since the same holds if $j_1$ is replaced by $j_2$, 
consequently, we have that $I_{\gamma}$ is bounded from above by 
$$
\sum_{r\geq 3} \sum_{\substack{(\sigma,j) \in \mathcal{M}_r \setminus \mathcal{R}_r\\ |j_1| \leq 2\sqrt{r} \langle j_3 \rangle \dots \langle j_r \rangle \\ |j_2| \leq 2\sqrt{r} \langle j_3 \rangle \dots \langle j_r \rangle}} \Big|\{ a \in (1,4)^d  \ : \ \big| \sum_{k=1}^r \sigma_k \sqrt{1+|j_k|_a^4} \big| < \kappa_{r,d} \gamma^{r(r+1)} (\langle j_3 \rangle \dots \langle j_r \rangle)^{-9 d r^2}\} \Big|.
$$
Now denoting by  $c_{r,d}>0$ the constant given by Lemma \ref{lem_gen}, we get
$$
I_{\gamma} \leq   \sum_{r\geq 3} c_{r,d} \sum_{\substack{(\sigma,j) \in \mathcal{M}_r \setminus \mathcal{R}_r\\ |j_1| \leq 2\sqrt{r} \langle j_3 \rangle \dots \langle j_r \rangle \\ |j_2| \leq 2\sqrt{r} \langle j_3 \rangle \dots \langle j_r \rangle}} 
\left( \kappa_{r,d} \gamma^{r(r+1)} (\langle j_3 \rangle \dots \langle j_r \rangle)^{-9 d r^2}  \right)^{\frac1{r(r+1)}}  (\langle j_1 \rangle \dots \langle j_r \rangle)^{\frac{12}{r+1}}.
$$
Consequently, we get an other constant $\widetilde{c}_{r,d}>0$ such that
$$
I_{\gamma} \leq  \gamma  \sum_{r\geq 3} \widetilde{c}_{r,d} \kappa_{r,d}^{\frac1{r(r+1)}} \sum_{j_3,\dots,j_r \in \mathbb{Z}^d} (\langle j_3 \rangle \dots \langle j_r \rangle)^{-9 d \frac{r^2}{r(r+1)} +\frac{36}{r+1}}.
$$
Noting that $9 d \frac{r^2}{r(r+1)} -\frac{36}{r+1}\geq 2d$, we deduce that 
$$
I_{\gamma} \leq \gamma  \sum_{r\geq 3} \widetilde{c}_{r,d}\, \kappa_{r,d}^{\frac1{r(r+1)}} \Big( \sum_{j\in \mathbb{Z}^d} \langle j\rangle^{-2d} \Big)^{r-2}.
$$
Consequently, we deduce a natural choice for $\kappa_{r,d}$ such that $I_{\gamma}<\gamma$ which conclude this proof. 

\section{The  Birkhoff normal form step}
In the rest of the paper we shall fix the parameter $\nu$,
(see \eqref{toriIrr} and \eqref{omegoneBeam})
defining the irrationality of the torus, 
in the full Lebesgue measure set given by 
Proposition \ref{mainNR}. 
For $d\geq2$ and $n\in \mathbb{N}$ we define
\begin{equation}\label{def:Mnd}
M_{d,n}:=\left\{
\begin{aligned}
&n+2(n-2)+1 \quad\quad\; {\rm if}\;\; d=2\;\; {\rm and}\;\; n\;\; {\rm odd}\\
&n+2(n-2) \qquad\quad\quad {\rm if}\;\; d=2\;\; {\rm and}\;\; n\;\; {\rm even}\\
&n+\phantom{2}(n-2) \qquad\quad\quad {\rm if}\;\; d=3\\
&n\phantom{22}
\;\,\qquad\qquad\qquad\quad\;\, {\rm if} \;\; d\geq4\,.
\end{aligned}\right.
\end{equation}
The main result of this section is the following.

\begin{theorem}\label{BNFstep} 
Let $d=2,3$ and let $r\in \mathbb{N}$ such that  $M_{d,n}\leq r\leq 4n$.
 There exits $\beta=\beta(d,r)>0$ 
 such that for any 
$N\geq 1$, any $\delta>0$ 
and $s\geq s_0=s_0(\beta)$, 
there exist $\eps_0\lesssim_{s,\delta} N^{-\delta}$ 
and two canonical transformation $\tau^{(0)}$ and $\tau^{(1)}$ 
making the following diagram to commute
\begin{equation}
\label{diagram}
\xymatrixcolsep{5pc} \xymatrix{  B_s(0,\eps_0) \ar[r]^{ \tau^{(0)} }
 \ar@/_1pc/[rr]_{\mathrm{id}_{{H}^s}} & B_s(0,2\eps_0)  \ar[r]^{ \tau^{(1)} }  & 
 {H}^s(\mathbb{T}^{d})  } 
\end{equation}
and close to the identity
\begin{align}\label{tau}
\forall \sigma\in \{0,1\}, \ \|u\|_{{H}^s} <2^{\sigma}\eps_0 \;\; \Rightarrow \;\;  
\|\tau^{(\sigma)}(u)-u\|_{{H}^s} \lesssim_{s,\delta} N^{\delta} \|u\|_{{H}^s}^2
\end{align}
such that, on $B_s(0,2\eps_0)$, $H \circ \tau^{(1)}$ writes
\begin{equation}\label{HamTau1}
H \circ \tau^{(1)} = Z_2+ 
\sum_{k=n}^{M_{d,n}-1}Z_{k}^{\leq N}+\sum_{k= M_{d,n}}^{r-1}K_{k}+K^{>N}+\tilde{R}_{r}
\end{equation}
where $M_{d,n}$ is given in \eqref{def:Mnd} and where

\vspace{0.4em}
\noindent
$(i)$
 $Z_{k}^{\leq N}$, for $k=n,\ldots, M_{d,n}-1$,
are resonant Hamiltonians of order $k$
given by the formula
\begin{equation}\label{Z4}
Z_k^{\leq N} = \sum_{\substack{\sigma\in\{-1,1\}^k,\ j\in(\Z^{d})^{k},\ \mu_2(j)\leq N\\
\sum_{i=1}^k\sigma_i j_i=0\\
\sum_{i=1}^k\sigma_i \omega_{j_i}=0}}
(Z_{k}^{\leq N})_{\sigma,j}u_{j_1}^{\sigma_1}\cdots u_{j_k}^{\sigma_k}\,, 
\quad |(Z_{k}^{\leq N})_{\sigma,j}|\lesssim_{\delta}N^{\delta} \frac{\mu_3(j)^\beta}{\mu_1(j)}\,;
\end{equation}

\vspace{0.3em}
\noindent
$(ii)$
 $K_k$, $k=M_{d,n},\ldots, r-1$,  
 are homogeneous polynomials of order $k$
 \begin{equation}\label{K6}
K_k= \sum_{\substack{\sigma\in\{-1,1\}^k,\ j\in(\Z^{d})^{k}\\
\sum_{i=1}^k\sigma_i j_i=0}}
(K_{k})_{\sigma,j}u_{j_1}^{\sigma_1}\cdots u_{j_k}^{\sigma_k}\,,
\quad |(K_k)_{\sigma,j}|\lesssim_{\delta} N^{\delta} \mu_3(j)^\beta\,;
\end{equation}

\vspace{0.3em}
\noindent
$(iii)$
$K^{>N}$ and $\tilde{R}_{r}$ are remainders 
 satisfying
 \begin{align}
 \|X_{K^{>N}}(u)\|_{H^{s}}&\lesssim_{s,\delta}N^{-1+\delta}\|u\|_{H^{s}}^{n-1}\,,\label{KmagNN}\\
\| X_{\tilde R_r}(u)\|_{H^{s}}&\lesssim_{s,\delta} N^{\delta} \| u\|_{H^s}^{r-1}\,.\label{R8tilde}
\end{align}
\end{theorem}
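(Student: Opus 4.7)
The plan is to perform Birkhoff normal form steps iteratively, one order at a time, for $k = n, n+1, \ldots, M_{d,n}-1$. At each step we eliminate (up to the truncation $\mu_2 \leq N$) the non-resonant contribution of the current degree-$k$ Hamiltonian, at the price of introducing corrections at strictly higher orders. The induction requires careful bookkeeping of the coefficient bounds, tracking separately the $\mu_1$ loss and the polynomial growth in $\mu_3$.

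For a single Birkhoff step, assume inductively that the current Hamiltonian reads $Z_2 + \sum_{\ell=n}^{k-1} Z_\ell^{\leq N} + P_k + (\text{higher order}) + (\text{remainders})$, with $P_k$ homogeneous of degree $k$ and coefficients bounded by $\mu_1^{\rho_k}\mu_3^{\beta_k}$ inherited from the previous iterations. One solves the homological equation $\{\chi_k, Z_2\} + P_k^{\leq N} = Z_k^{\leq N}$ by splitting between resonant and non-resonant frequency combinations: the resonant part defines $Z_k^{\leq N}$, and on non-resonant indices $(\chi_k)_{\sigma,j} = -\ii (P_k^{\leq N})_{\sigma,j}/\sum_\ell \sigma_\ell \omega_{j_\ell}$. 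By Proposition \ref{mainNR}, in the worst scenario ($\sigma_1\sigma_2=-1$) the divisor is bounded below by $\mu_1^{-(d-1+\delta)}$ times a negative power of $\mu_3$, so $\chi_k$ loses at most $d-1$ derivatives in $\mu_1$, consistently with the informal bound \eqref{chiCoeff}. One then invokes Lemma \ref{lem:flowmap} to deduce that $\Phi_{\chi_k}$ is well defined on $B_s(0, 2\eps_0)$ for $s$ large enough, is close to the identity, and that the $N^\delta$ loss in \eqref{tau} comes from the $\mu_2\leq N$ cutoff.

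Next one updates the Hamiltonian and iterates. Expanding $H\circ \Phi_{\chi_k}$ as a Lie series cancels the non-resonant part of $P_k^{\leq N}$ but generates commutators $\{P_\ell,\chi_k\}$ at orders $\ell+k-2\geq k+1$, plus nested brackets. The core technical task is to verify that these commutators remain in a class of homogeneous polynomials with coefficient bounds $\mu_1^{\rho_\ell}\mu_3^{\beta_\ell}$, with $\rho_\ell$ updated according to an explicit rule and kept non-positive for all $\ell<M_{d,n}$. Iterating produces successive canonical transformations whose composition defines $\tau^{(1)}$, with $\tau^{(0)}$ its local inverse on $B_s(0,\eps_0)$, and the procedure terminates at $k=M_{d,n}-1$: this is precisely the largest order at which the accumulated $\mu_1$-loss still allows the homological equation to yield a flow on $H^s$. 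Starting from the $\mu_1^{-2}$ smoothing of the original nonlinearity and paying $\mu_1^{d-3+\delta}$ per step, one can afford roughly $2(n-2)$ steps in $d=2$ and $n-2$ in $d=3$, matching \eqref{def:Mnd}. The extra $+1$ for $d=2$ and $n$ odd reflects the absence of odd-order resonant monomials in Definition \ref{def_res} (only even $r$ are allowed), so the corresponding $Z_k$ vanish and that step is essentially free.

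The main obstacle is the inductive tracking of the coefficient bounds. One must understand how $\mu_1(l)$ and $\mu_3(l)$ for a Poisson-bracket output index $l$ relate to the input indices, and use the zero-momentum constraint (which gives $\mu_1\lesssim \mu_2$ and allows one to trade $\mu_1$ for $\mu_3$) to absorb all combinatorial losses into $\beta$. The borderline case $d=3$, in which $\chi_k$ is only bounded by $\mu_1^\delta$, requires an especially sharp version of the flow estimate; the $N^\delta$ factor in \eqref{tau} comes precisely from combining this with the $\mu_2\leq N$ truncation. Finally, the tail pieces $K^{>N}$ and $\tilde R_r$ in \eqref{HamTau1} are collected from the high-mode truncations and the higher-order Taylor remainders, and satisfy \eqref{KmagNN}, \eqref{R8tilde} thanks to \eqref{initCoeff} and \eqref{R8}, respectively.
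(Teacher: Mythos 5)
Your proposal follows the same basic strategy as the paper --- Birkhoff normal form reduction using the small-divisor bounds of Proposition \ref{mainNR} and a bookkeeping of $\mu_1$-losses --- but organizes the iteration differently. The paper does \emph{not} proceed one order at a time: for $d=2,3$ it performs a first grouped step with a single generator $\chi^{(1)}=\sum_{k=n}^{2n-3}\chi^{(1)}_k$ that simultaneously kills (up to the $N$-cutoff) all non-resonant terms of orders $n$ through $2n-3$, since these are all original Taylor pieces $H_k$ with uniform $\mu_1^{-2}$ coefficients and the small-divisor bound is applied once, producing $\chi^{(1)}_k$ with $\mu_1^{-(3-d)+\delta}$ coefficients (Lemmata \ref{lem:homoeq}--\ref{coniugo1}); for $d=3$ this already exhausts the smoothing and gives $M_{3,n}=2n-2$, while for $d=2$ a second grouped generator $\chi^{(2)}$ cleans orders $2n-2$ up to $M_{2,n}-1$, all having the same $\mu_1^{-1+\delta}$ decay after Step 1 (Lemmata \ref{lem:homoeq2}--\ref{coniugo2}). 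Grouping buys a much shorter argument (two Lie-series expansions instead of $M_{d,n}-n$ compositions) and makes the termination threshold transparent: the $\mu_1$-decay does not degrade ``per step'' as your phrase ``paying $\mu_1^{d-3+\delta}$ per step'' suggests, but rather jumps by a factor $\mu_1^{d-1+\delta}$ only when passing from one group to the next, and the stopping order $M_{2,n}$ is determined by the first uncontrollable commutator with a resonant term, $\{Z_n,\chi^{(2)}_{2n-2}\}$ at order $3n-4$ (or $\{Z_{n+1},\chi^{(2)}_{2n-2}\}$ at $3n-3$ when $n$ is odd and $Z_n\equiv 0$), which your ``$+1$ for $n$ odd'' observation does correctly capture. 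Your order-by-order scheme, carried through with the coefficient and Poisson-bracket estimates of Lemma \ref{lem:hamvec}, would reach the same $M_{d,n}$ and the same conclusion, just with heavier combinatorics.
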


 It is convenient to introduce the following class.
\begin{definition}{\bf (Formal Hamiltonians)}\label{Ham:class}
Let $N\in \mathbb{R}$, $k\in \mathbb{N}$
with $k\geq3$ and $N\geq1$.

\noindent
$(i)$ We denote by $ \mathcal{L}_k$ 
the set of Hamiltonian  having homogeneity 
$k$ and such that they may be written in the form
\begin{align}
G_{k}(u)&= 
\sum_{\substack{\sigma_i\in\{-1,1\},\ j_i\in\Z^d\\
\sum_{i=1}^k\sigma_i j_i=0}}
(G_{k})_{\sigma,j}u_{j_1}^{\sigma_1}\cdots u_{j_k}^{\sigma_k}\,,
\quad 
(G_{k})_{\sigma,j}\in \mathbb{C}\,, 
\quad \begin{array}{cl}&\sigma:=(\sigma_1,\ldots,\sigma_k)\\
&j:=(j_1,\ldots,j_k)\end{array}\label{HamG} 
\end{align}
 with symmetric coefficients $(G_k)_{\s,j}$, i.e.
for any $\rho\in\mathfrak{S}_{k}$
one has $(G_{k})_{\sigma,j}=(G_{k})_{\sigma\circ\rho,j\circ\rho}$. 

\noindent
$(ii)$ If $G_{k}\in \mathcal{L}_{k}$ then $G_{k}^{>N}$ denotes the element of 
$\mathcal{L}_{k}$ defined by
\begin{equation}\label{HamGhigh}
(G_{k}^{>N})_{\sigma,j}:=\left\{
\begin{array}{lll}
&(G_{k})_{\sigma,j}\,,&{\rm if} \;\;\mu_{2}(j)>N\,,\\
&0\,,  & {\rm else}\,.
\end{array}
\right.
\end{equation}
%
We set
$G^{\leq N}_{k}:=G_{k}-G^{>N}_{k}$. 
\end{definition}

\begin{remark}
Consider the Hamiltonian $H$ in \eqref{beamHam} and its Taylor expansion in 
\eqref{expinitial}. One can note that the Hamiltonians $H_{k}$ in \eqref{H8}
belong to the class $\mathcal{L}_{k}$.
This follows form the
 fact that, without loss of generality, one can substitute 
 the Hamiltonian $H_{k}$ with its symmetrization.
\end{remark}

We also need the following definition.
\begin{definition}\label{def:adZ2}
Consider the Hamiltonian $Z_2$ in \eqref{quadratHam} 
and
$G_{k}\in \mathcal{L}_{k}$.

\noindent
$\bullet$ {\bf (Adjoint action).}
We define the adjoint action ${\rm ad}_{Z_2}G_k$ in $\mathcal{L}_{k}$
by 
\begin{equation}\label{adZ2}
({\rm ad}_{Z_2}G_k)_{\s,j}:=
\Big(\ii \sum_{i=1}^{k}\s_i\omega_{j_i}\Big)
(G_{k})_{\sigma,j}\,.
\end{equation}

\noindent
$\bullet$ {\bf (Resonant Hamiltonian).}
We define $G_{k}^{\rm res}\in \mathcal{L}_{j}$ by 
\[
(G_{k}^{res})_{\s,j}:=(G_{k})_{\s,j}\,,\;\;{\rm when}\;\;\; 
\sum_{i=1}^{k}\s_i\omega_{j_i}=0
\]
and $(G_{k}^{\rm res})_{\s,j}=0$ otherwise.

\noindent
$\bullet$ We define $G_{k}^{(+1)}\in \mathcal{L}_{k}$ 
by
\[
\begin{aligned}
&(G_{k}^{(+1)})_{\s,j}:=(G_{k})_{\s,j}\,,\;\;{\rm when}\;\;\; 
\exists i,p=1,\ldots,k\;{\rm s.t.}\;\\
& \mu_{1}(j)=|j_{i}|\,,\;\; \mu_{2}(j)=|j_{p}|
 \;\;{\rm and}\;\; \s_{i}\s_{p}=+1\,.
\end{aligned}
\]
We define $G_{k}^{(-1)}:=G_k-G_{k}^{(+1)}$.
\end{definition}

%
%
%

\begin{remark}\label{Rmk:resonant}
Notice that, in view of Proposition \ref{mainNR}, the resonant Hamiltonians given in  
Definition \ref{def:adZ2} must be supported on indices $\s\in\{-1,1\}^{k}$, 
$j\in\mathbb{Z}^{kd}$ which are  resonant according to Definition \ref{def_res}.
We remark that $(G_{k})^{\rm res}\equiv0$ if $k$ is odd.
\end{remark}

In the following lemma we collect  some properties of the Hamiltonians 
in Definition \ref{Ham:class}.

\begin{lemma}\label{lem:hamvec}
Let $N\geq1$, $0\leq \delta_i<1$, $q_i\in\R$, $k_i\geq3$, 
consider 
$G^i_{k_i}(u)$ in $\mathcal{L}_{k_i}$ for $i=1,2$. Assume
that the coefficients 
$(G^i_{k_i})_{\sigma,j}$ satisfy
\begin{equation}\label{coeffGp}
|(G^i_{k_i})_{\sigma,j}|\leq C_{i} N^{\delta_i}
\mu_{3}(j)^{\beta_i}\mu_{1}(j)^{-q_{i}}\,,
\qquad \forall \s\in\{-1,+1\}^{k}\,,\; j\in \mathbb{Z}^{kd}\,,
\end{equation}
for some $\beta_i>0$ and $C_i>0$, $i=1,2$.

\noindent {\bf (i) (Estimates on  Sobolev spaces)} 
Set $k=k_i$, $\delta=\delta_i$, $q=q_i$, $\beta=\beta_i$, $C=C_i$ 
and $G^i_{k_i}=G_k$ for $i=1,2$. There is $s_0=s_0(\beta,d)$ such that 
 for $s\geq s_0$,  $G_{k}$ defines naturally a smooth function
 from $H^{s}(\mathbb{T}^{d})$ to $\mathbb{R}$. In particular
 one has the following estimates: 
\begin{align}
|G_{k}(u)|
&\lesssim_{s}CN^{\delta}\|u\|_{H^{s}}^{k}\label{est:energy-form},
\\
\|X_{G_k}(u)\|_{H^{s+q}}
&\lesssim_s CN^{\delta}\|u\|_{H^{s}}^{k-1}\,,\label{est:vector-field}
\\
\|X_{G_{k}^{>N}}(u)\|_{H^{s}}
&\lesssim_{s} CN^{-q+\delta}\|u\|^{k-1}_{H^{s}}\,, \label{est:smooth-vec-fie}
\end{align}
for any $u\in H^{s}(\mathbb{T}^{d}).$ \\
\noindent{\bf (ii) (Poisson bracket)} The Poisson bracket between $G^1_{k_1}$ and $G^{2}_{k_2}$ is an element of $\mathcal{L}_{k_1+k_2-2}$ and it verifies the estimate
\begin{equation}\label{stimaPoisbra}
|(\{G^1_{k_1},G^2_{k_2}\})_{\sigma,j}|\lesssim_{s} C_1 C_2 N^{\delta_1+\delta_2}\mu_{3}^{\beta_1+\beta_2}\mu_1(j)^{-\min\{q_1,q_2\}},
\end{equation}
for any $\sigma\in\{+1,-1\}^{k_1+k_2-1}$ and $ j\in\mathbb{Z}^{d(k_1+k_2-2)}.$
\end{lemma}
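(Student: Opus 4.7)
\medskip

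\noindent\textbf{Plan of proof.} The core of Part (i) is the zero-momentum constraint $\sum_i \sigma_i j_i = 0$, which forces $\mu_1(j) \leq (k-1)\,\mu_2(j)$. For the energy bound \eqref{est:energy-form}, I would expand $G_k$ as in \eqref{HamG}, pull out the coefficient bound \eqref{coeffGp}, apply Cauchy--Schwarz in the two largest frequencies (placing the Sobolev weights $\langle j_1\rangle^s$ and $\langle j_2\rangle^s$ on them), and use Sobolev embedding $H^{s_0}\hookrightarrow L^\infty$ on the remaining $k-2$ factors to get
\begin{equation*}
|G_k(u)|\lesssim C N^{\delta}\,\|u\|_{H^s}^2\,\|u\|_{H^{s_0}}^{k-2}\,\Big(\sum_{j_3,\ldots,j_k} \mu_3(j)^{2\beta}\langle j_3\rangle^{-2s_0}\cdots\langle j_k\rangle^{-2s_0}\Big)^{1/2},
\end{equation*}
which is finite once $s_0>d/2+\beta$. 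The factor $\mu_1^{-q}\leq 1$ (for $q\geq 0$) is simply discarded here, or kept for the next step.

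\medskip

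\noindent For the vector-field bound \eqref{est:vector-field}, I would compute the $j$-th Fourier component of $X_{G_k}(u)$ via \eqref{grad-ham}: up to symmetry factors it is a $(k-1)$-linear form in $u$ summed over $(j_2,\ldots,j_k)$ with the output index $j=j_1$ plugged into the zero-momentum relation. Placing the weight $\langle j\rangle^{s+q}$ in the $H^{s+q}$-norm, the factor $\mu_1(j,j_2,\ldots,j_k)^{-q}$ absorbs exactly $\langle j\rangle^q$ when $j$ is the largest mode, and otherwise absorbs a comparable weight coming from one of the $\langle j_i\rangle$, via $\mu_1\lesssim\mu_2$. The remainder of the argument then parallels the energy estimate. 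For the smoothed bound \eqref{est:smooth-vec-fie}, the restriction $\mu_2>N$ gives $\mu_1\geq\mu_2>N$, so $\mu_1^{-q}\leq N^{-q}$; substituting this into the coefficient bound produces a new bound with $N^{\delta-q}\mu_3^{\beta}$ and no $\mu_1^{-q}$, and one applies the vector-field argument to estimate the $H^s$ (rather than $H^{s+q}$) norm directly, yielding the $N^{-q+\delta}$ gain.

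\medskip

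\noindent For Part (ii), I would expand $\{G^1_{k_1},G^2_{k_2}\}$ from \eqref{Poissonbrackets}: each derivative $\partial_{u_n}$, $\partial_{\bar u_n}$ removes one mode and the sum over $n$ contracts two opposite-sign indices, one from each factor. Collecting powers of $u$, the coefficient of the monomial $u^{\sigma}_{j}$ in the resulting element of $\mathcal{L}_{k_1+k_2-2}$ is a finite sum, over pairings and choices of contracted mode $n\in\mathbb{Z}^d$, of products $(G^1)_{\sigma^1,j^1}(G^2)_{\sigma^2,j^2}$, where $j^1\cup j^2$ equals $j$ together with a compatible pair $\pm n$. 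Using \eqref{coeffGp} term by term yields a product $\mu_3(j^1)^{\beta_1}\mu_3(j^2)^{\beta_2}\,\mu_1(j^1)^{-q_1}\mu_1(j^2)^{-q_2}$ times $C_1C_2N^{\delta_1+\delta_2}$. The largest mode $\mu_1(j)$ must sit in one of $j^1$, $j^2$, say $j^1$; then $\mu_1(j^1)\geq\mu_1(j)$, while $\mu_1(j^2)^{-q_2}\leq 1$, so the product is dominated by $\mu_1(j)^{-\min(q_1,q_2)}$ (and symmetrically if the maximum lies in $j^2$). A short case analysis on whether the contracted mode $n$ falls above or below the top three modes of each $j^i$ shows that $\mu_3(j^i)$ can always be dominated by quantities already appearing in $j$, yielding the overall $\mu_3(j)^{\beta_1+\beta_2}$ bound up to a constant.

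\medskip

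\noindent\textbf{Main obstacle.} The routine piece is the Sobolev arithmetic in Part (i): it is the standard Cauchy--Schwarz plus Sobolev-embedding template, and the zero-momentum constraint makes it go through without surprises. The more delicate step is the $\mu_3$ bookkeeping in Part (ii), because the contracted internal mode $n$ does not appear in the output tuple $j$ and can in principle be arbitrarily large; I expect that handling this cleanly requires a careful case split (whether $n$ ranks among $\mu_1,\mu_2,\mu_3$ of $j^1$ and of $j^2$) to ensure that the localized dependencies $\mu_3(j^1),\mu_3(j^2)$ can always be bounded by $\mu_3(j)$ up to combinatorial constants depending on $k_1,k_2$.
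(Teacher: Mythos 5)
Your overall strategy is sound and close to the paper's, but there are a couple of spots worth flagging. For the energy bound \eqref{est:energy-form} the paper does something simpler than your Cauchy--Schwarz-on-the-top-two-modes plan: it orders $|j_1|\geq\dots\geq|j_k|$, puts the single weight $|j_3|^\beta$ on the third index, drops the ordering constraint, and bounds every one of the $k$ sums by an $\ell^1$ norm via $\sum_j |u_j|\lesssim_\epsilon \|u\|_{H^{d/2+\epsilon}}$ and $\sum_j |j|^\beta|u_j|\lesssim_\epsilon \|u\|_{H^{d/2+\beta+\epsilon}}$. Your displayed inequality for this step is internally inconsistent as written: you keep both a prefactor $\|u\|_{H^{s_0}}^{k-2}$ \emph{and} weights $\langle j_3\rangle^{-2s_0}\cdots\langle j_k\rangle^{-2s_0}$ inside the residual sum (which would double-count the small modes), and the quantity $\mu_3(j)$ appearing there still depends on $j_1,j_2$ which you had already integrated out. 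The intent is recoverable, but the clean route is the all-$\ell^1$ one. Your treatment of \eqref{est:vector-field} and \eqref{est:smooth-vec-fie} matches the paper (which proves \eqref{est:vector-field} by estimating $\langle n\rangle^{s+q}|\partial_{\bar u_n}G_k|$ and applying Young's convolution inequality, and then says the $>N$ bound follows the same lines), including the crucial observations $\langle n\rangle\lesssim\langle j_1\rangle$, $\mu_1(j,n)\geq\langle j_1\rangle$ and $\mu_1>N$ on the high-truncation.

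For part (ii) the paper gives no argument at all, so you are on your own; your outline is essentially right, but the ``main obstacle'' you name is stated imprecisely and the resolution deserves to be made explicit. The internal mode $n$ is \emph{not} summed independently: once the output tuple $j$ and the partition of its indices between $j^1$ and $j^2$ are fixed, the zero-momentum constraint on either factor pins $n$ down, and in particular $|n|\lesssim_{k_1,k_2}\mu_1(j)$. The genuine point is whether $\mu_3(j^1)$ and $\mu_3(j^2)$ (each of which involves $n$) can exceed $\mu_3(j)$. The case split that resolves this is the one you gesture at: if the two largest entries of $j$ both land in, say, $j^1$, then every $j$-index in $j^2$ has modulus $\leq\mu_3(j)$, and the momentum constraint on $j^2$ alone forces $|n|\lesssim_{k_2}\mu_3(j)$, after which both $\mu_3(j^1)$ and $\mu_1(j^2)$ are $\lesssim\mu_3(j)$; if the two largest entries are split between $j^1$ and $j^2$, then every entry of each factor other than $j_1$, $j_2$ and $n$ is $\leq\mu_3(j)$, and one checks directly that the third-largest of each factor is $\leq\mu_3(j)$ regardless of how large $|n|$ is. Spelling this out completes what the paper leaves implicit. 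One last nit: your bound $\mu_1(j^2)^{-q_2}\leq 1$ and the identification of $\mu_1(j)^{-\min\{q_1,q_2\}}$ both implicitly use $q_i\geq 0$; the lemma is only applied for nonnegative $q$ in the paper, so this is harmless, but you should say you are assuming it.
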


\begin{proof}
We prove item {\bf (i)}. 
Concerning the proof of \eqref{est:energy-form} 
it is sufficient to give the proof in the case $q=0$. 
For convenience, without loss of generality, we assume $C_{i}=1$, $i=1,2$.
We have 
\begin{align*}
|G_k(u)|&\leq k!\sum_{\substack{j_1,\ldots,j_k\in\Z^d \\ 
|j_1|\geq |j_{2}|\geq|j_3|\geq \ldots\geq |j_{k}|
}} 
|(G_k)_{\sigma,k}||u_{j_1}^{\sigma_1}|\cdots|u_{j_k}^{\sigma_k}|
\\&
\lesssim_{k} N^{\delta}\sum_{j_3\in\Z^d}|j_3|^{\beta}|u_{j_3}^{\sigma_3}| 
\prod_{3 \neq i=1}^k\sum_{j_i\in\Z^d}|u_{j_i}^{\sigma_i}|
\lesssim_{k,\epsilon} N^{\delta}\|u\|_{H^{d/2+\beta+\epsilon}}\|u\|_{H^{d/2+\epsilon}}^{k-1},
\end{align*}
for any $\epsilon>0$, we proved the \eqref{est:energy-form} 
with $s_0=d/2+\epsilon+\beta$. 
 
\noindent 
We now prove \eqref{est:vector-field}. Since the coefficients of $G_k$ are symmetric, we have
$$
\partial_{\bar{u}_n}G_k(u) = k \sum_{\sigma_1 j_1+\dots+\sigma_{k-1} j_{k-1}=n} (G_k)_{(\sigma,-1),(j,n)} u_{j_1}^{\sigma_1} \dots u_{j_{r-1}}^{\sigma_{r-1}}
$$
 Therefore, we have
\begin{equation*}
\begin{split}
\langle n\rangle^{s+q}|\partial_{\bar{u}_n}G_k(u)| &\leq k !  \! \! \! \! \! \! \! \! \! \! \! \! \sum_{\substack{\sigma_1 j_1+\dots+\sigma_{k-1} j_{k-1}=n \\ |j_1|\geq \dots \geq |j_{k-1}|}}  \! \! \! \! \! \!  \! \! \! \! \! \!  |(G_k)_{(\sigma,-1),(j,n)}| |u_{j_1}^{\sigma_1}| \dots |u_{j_{r-1}}^{\sigma_{r-1}}| \langle n\rangle^{s+q} \\
&\mathop{\lesssim}^{\eqref{coeffGp}}  N^{\delta} \! \! \! \! \! \! \! \! \! \! \! \! \sum_{\substack{\sigma_1 j_1+\dots+\sigma_{k-1} j_{k-1}=n \\ |j_1|\geq \dots \geq |j_{k-1}|}}  \! \! \! \! \! \!  \! \! \! \! \! \! \mu_{3}(j,n)^{\beta}\mu_{1}(j,n)^{-q}  |u_{j_1}^{\sigma_1}| \dots |u_{j_{r-1}}^{\sigma_{r-1}}| \langle n\rangle^{s+q}.
\end{split}
\end{equation*}
We note that in the last sum above, we have $\langle n\rangle \lesssim \langle j_1 \rangle$, $\mu_{1}(j,n)\geq \langle j_1 \rangle$ and $\mu_{3}(j,n)\leq \langle j_2 \rangle$. As a consequence, we deduce that
\begin{equation*}
\begin{split}
\langle n\rangle^{s+q}|\partial_{\bar{u}_n}G_k(u)|  &\lesssim_{s} N^{\delta} \! \! \! \! \! \! \! \! \! \! \! \! \sum_{\substack{\sigma_1 j_1+\dots+\sigma_{k-1} j_{k-1}=n \\ |j_1|\geq \dots \geq |j_{k-1}|}}  \! \! \! \! \! \!  \! \! \! \! \! \!  \langle j_1 \rangle^{s} \langle j_2 \rangle^{\beta}  |u_{j_1}^{\sigma_1}| \dots |u_{j_{r-1}}^{\sigma_{r-1}}| \\ &\lesssim_{s} N^{\delta}  \! \! \! \! \! \! \! \! \sum_{j_1+\dots+j_{k-1}=n }  \! \!  \! \! \! \! \! \!  \langle j_1 \rangle^{s} \langle j_2 \rangle^{\beta}  |u_{j_1}| \dots |u_{j_{r-1}}|.
\end{split}
\end{equation*}
Consequently, applying the Young convolutional inequality, we get
\begin{multline*}
\| X_{G_k}(u) \|_{H^{s+q}} = \|(\langle n\rangle^{s+q}|\partial_{\bar{u}_n}G_k(u)|)_{n\in \mathbb{Z}^d} \|_{\ell^2} \lesssim_s N^{\delta}\| u \|_{H^s} \big( \sum_{j\in \mathbb{Z}^d} \langle j \rangle^\beta |u_j| \big)  \big( \sum_{j\in \mathbb{Z}^d}  |u_j| \big)^{k-3} \\
 \lesssim_s N^{\delta} \|u\|_{H^s}^{k-1}.
\end{multline*}

The proof of \eqref{est:smooth-vec-fie} follows the same lines. 
The proof of item {\bf (ii)} of the lemma 
is a direct consequence of the previous computations, 
definition \eqref{Poissonbrackets} and the momentum condition.
\end{proof}

We are in position to prove the main Birkhoff result.

\vspace{0.3em}
\noindent
{\bf Proof of Theorem \ref{BNFstep}.}
In the case $d=2$
we perform two steps of Birkhoff normal form procedure, 
see Lemmata
\ref{coniugo1}, \ref{coniugo2}.
The case $d=3$ is slightly different. Indeed, due to the 
estimates on the small divisors given in Proposition \ref{mainNR}, 
we can note  that the Hamiltonian in \eqref{newHam1} has already 
the form \eqref{HamTau1} since the coefficients of the Hamiltonians 
$\tilde{K}_{k}$ (see \eqref{coeffTildeK}) do not decay anymore 
in the largest index $\mu_1(j)$.
The proof of Theorem \ref{BNFstep} is then concluded 
after just one step of Birkhoff normal form.

\vspace{0.5em}
\noindent
{\bf Step 1 if $d=2$ or $d=3$.}
We have the following Lemma.
\begin{lemma}{\bf (Homological equation 1).}\label{lem:homoeq}
Let $q_{d}=3-d$ for $d=2,3$. For  any $N\geq1$ and $\delta>0$
there exist multilinear Hamiltonians $\chi^{(1)}_{k}$, $k=1,\ldots , 2n-3$
in the class $\mathcal{L}_{k}$ with  coefficients 
$(\chi_{k}^{(1)})_{\s,j}$ satisfying
\begin{equation}\label{stimacoeffchik}
|(\chi_{k}^{(1)})_{\s,j}|\lesssim_{\delta}
N^{\delta} \mu_3(j)^{\beta}\mu_1(j)^{-q_{d}}\,,
\end{equation}
such that (recall Def. \ref{def:adZ2})
\begin{equation}\label{homoeq1}
\{\chi_{k}^{(1)},Z_{2}\}+H_{k}=Z_{k}+H_{k}^{>N}\,,\quad k=n,\ldots,2n-3\,,
\end{equation}
where $Z_2$, $H_k$ are given in \eqref{quadratHam}, \eqref{H8}
and $Z_{k}$ is the resonant Hamiltonian defined as
\begin{equation}\label{resonantHamZZ}
Z_{k}:=(H_{k}^{\leq N})^{\rm res}\,, \quad k=n,\ldots,2n-3\,.
\end{equation}
Moreover $Z_{k}$ belongs to
$\mathcal{L}_{k}$ and  has  coefficients  satisfying \eqref{Z4}.
\end{lemma}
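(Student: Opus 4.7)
The natural approach is to diagonalize the homological equation on the monomial Fourier basis. Using the definition \eqref{adZ2}, the Poisson bracket $\{\chi_k^{(1)},Z_2\}$ acts on each coefficient by multiplication by the divisor $i\sum_{i=1}^k\sigma_i\omega_{j_i}$, so \eqref{homoeq1} is equivalent to the scalar identity
\begin{equation*}
\Big(i\sum_{i=1}^k\sigma_i\omega_{j_i}\Big)(\chi_k^{(1)})_{\sigma,j}+(H_k)_{\sigma,j}=(Z_k)_{\sigma,j}+(H_k^{>N})_{\sigma,j}
\end{equation*}
for each $(\sigma,j)$. The construction is therefore the usual one: I would set $(\chi_k^{(1)})_{\sigma,j}:=0$ whenever either $\mu_2(j)>N$ (the discrepancy is absorbed into the term $H_k^{>N}$) or the divisor vanishes (in which case Proposition \ref{mainNR} forces $(\sigma,j)$ to be resonant in the sense of Definition \ref{def_res} and the discrepancy contributes to $Z_k=(H_k^{\le N})^{\rm res}$); and on the remaining set define
\begin{equation*}
(\chi_k^{(1)})_{\sigma,j}:=\frac{i\,(H_k)_{\sigma,j}}{\sum_{i=1}^k\sigma_i\omega_{j_i}}.
\end{equation*}
Symmetry of the coefficients is preserved since both $(H_k)_{\sigma,j}$ and the divisor are invariant under permutations of the labels, and \eqref{homoeq1} holds by construction.

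The core of the argument is the quantitative bound \eqref{stimacoeffchik}, to be obtained by combining the initial estimate $|(H_k)_{\sigma,j}|\lesssim\mu_1(j)^{-2}$ from \eqref{initCoeff} with the small-divisor estimates of Proposition \ref{mainNR}, splitting according to the sign of $\sigma_1\sigma_2$. In the favorable case $\sigma_1\sigma_2=+1$, part (ii) of the proposition loses only a power of $\langle j_3\rangle\cdots\langle j_k\rangle\lesssim\langle\mu_3(j)\rangle^{k-2}$, yielding $|(\chi_k^{(1)})_{\sigma,j}|\lesssim\mu_1(j)^{-2}\mu_3(j)^\beta$ for a suitable $\beta=\beta(d,k)$, which is stronger than \eqref{stimacoeffchik} since $q_d\le 1$. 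In the delicate case $\sigma_1\sigma_2=-1$, part (iii) introduces the dangerous factor $\langle j_1\rangle^{d-1+\delta}$, yielding
\begin{equation*}
|(\chi_k^{(1)})_{\sigma,j}|\lesssim\mu_1(j)^{-2+(d-1+\delta)}\mu_3(j)^\beta=\mu_1(j)^{-q_d+\delta}\mu_3(j)^\beta.
\end{equation*}
The extra $\mu_1(j)^\delta$ is then reabsorbed into $N^\delta$ via the zero-momentum condition: on the support $\mu_2(j)\le N$ of $\chi_k^{(1)}$, $\sum_i\sigma_i j_i=0$ forces $\mu_1(j)\le(k-1)\mu_2(j)\le(k-1)N$, so $\mu_1(j)^\delta\lesssim_k N^\delta$.

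The bound on $Z_k$ announced in \eqref{Z4} is essentially free, since $(Z_k)_{\sigma,j}=(H_k)_{\sigma,j}$ on the resonant truncated set and vanishes elsewhere, and one trivially has $\mu_1(j)^{-2}\lesssim\langle\mu_3(j)\rangle^\beta/\mu_1(j)$. The only genuine obstacle is the dimensional threshold: for $d\ge 4$ the exponent $d-3+\delta$ produced in case (iii) becomes positive and strictly exceeds the two derivatives of smoothing provided by \eqref{initCoeff}, so the coefficients of $\chi_k^{(1)}$ would grow in the largest Fourier mode and no associated symplectic flow could act boundedly on $H^s$. This is exactly why the statement is restricted to $d=2,3$ and why the scheme of section \ref{scheme} jumps directly to the modified energy step when $d\ge 4$.
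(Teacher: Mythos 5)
Your construction coincides with the paper's: you solve the homological equation coefficient-by-coefficient on the non-resonant, low-frequency set and obtain \eqref{stimacoeffchik} by combining the two derivatives of smoothing in \eqref{initCoeff} with the small-divisor dichotomy of Proposition~\ref{mainNR}, absorbing the stray $\mu_1(j)^\delta$ from the $\sigma_1\sigma_2=-1$ case into $N^\delta$ via the zero-momentum condition together with $\mu_2(j)\leq N$. The one cosmetic slip is a sign: with the convention \eqref{Poissonbrackets} one computes $(\{\chi_k^{(1)},Z_2\})_{\sigma,j}=-\ii\big(\sum_i\sigma_i\omega_{j_i}\big)(\chi_k^{(1)})_{\sigma,j}$, so your scalar identity and the resulting formula should carry the opposite sign, i.e.\ $(\chi_k^{(1)})_{\sigma,j}=-\ii\,(H_k)_{\sigma,j}\big/\sum_i\sigma_i\omega_{j_i}$ as in \eqref{coeffchik} — this has no bearing on the estimate.
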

\begin{proof}
Consider the Hamiltonians $H_{k}$ in \eqref{H8} with coefficients 
satisfying \eqref{initCoeff}.
Recalling Definition \ref{Ham:class} we write
\begin{equation*}
H_{k}=Z_{k}+(H_{k}^{\leq N}-Z_{k})+H^{>N}_{k}\,,\quad k=n,\ldots,r-1\,,
\end{equation*}
with $Z_k$ as in \eqref{resonantHamZZ}. 
We define
\begin{equation}\label{resonantHamZZ2}
\chi_{k}^{(1)}:=({\rm ad }_{Z_2})^{-1}\big[H_{k}^{\leq N}-Z_{k}\big]\,,\quad k=n,\ldots,2n-3\,,
\end{equation}
 where ${\rm ad}_{Z_2}$ is given by Definition
\ref{def:adZ2}.
In particular (recall formula \eqref{adZ2}) their coefficients 
have the form
\begin{equation}\label{coeffchik}
(\chi_{k}^{(1)})_{\s,j}:=(H_{k})_{\s,j}\Big(\ii \sum_{i=1}^{k}\s_i\omega_{j_i}\Big)^{-1}
\end{equation}
for indices $\s\in\{-1,+1\}^{k}$, $j\in(\Z^{d})^{k}$ such that
\[ 
\sum_{i=1}^{k}\s_i j_i=0\,, \;\;\;\mu_{2}(j)\leq N\;\;\;{\rm and}\;\;\;
\sum_{i=1}^{k}\s_i\omega_{j_i}\neq 0\,.
\]
By \eqref{initCoeff} and Proposition \ref{mainNR} (with $d=2,3$) 
we deduce
the bound \eqref{stimacoeffchik}
for some $\beta>0$. 
The resonant Hamiltonians $Z_{k}$ in \eqref{resonantHamZZ}
have the form \eqref{Z4}.
One can check by an explicit computation that equation \eqref{homoeq1}
is verified.
\end{proof}
We shall use the Hamiltonians $\chi^{(1)}_{k}$ given by Lemma 
\ref{lem:homoeq} to generate a symplectic change of coordinates.
\begin{lemma}\label{lem:flowmap}
Let us define
\begin{equation}\label{hamchi111}
\chi^{(1)}:=\sum_{k=n}^{2n-3}\chi^{(1)}_{k}\,.
\end{equation}
There is $s_0=s_0(d,r)$ such that for any $\delta>0$, for any $N\geq1$ and 
any $s\geq s_0$, if
$\eps_0\lesssim_{s,\delta} N^{-\delta}$, 
then
the problem
\begin{equation}\label{accendino}
\left\{
\begin{aligned}
&\pa_{\tau}Z(\tau)=X_{{\chi^{(1)}}}(Z(\tau))\\
&Z(0)=U=\vect{u}{\bar{u}}\,,\quad u\in B_{s}(0,\eps_0)
\end{aligned}
\right.
\end{equation}
has a unique solution $Z(\tau)=\Phi^{\tau}_{\chi^{(1)}}(u)$
belonging to $C^{k}([-1,1];H^{s}(\mathbb{T}^{d}))$ for any $k\in\mathbb{N}$.
Moreover the map 
$\Phi_{\chi^{(1)}}^{\tau} : B_{s}(0,\eps_0)\to H^{s}(\mathbb{T}^{d})$
is symplectic.
The flow map $\Phi^{\tau}_{\chi^{(1)}}$ and its inverse 
$\Phi^{-\tau}_{\chi^{(1)}}$ satisfy
\begin{equation*}
\begin{aligned}
&\sup_{\tau\in[0,1]}
\|\Phi^{\pm\tau}_{\chi^{(1)}}(u)-u\|_{H^{s}}
\lesssim_{s,\delta}  N^{\delta}\|u\|_{H^{s}}^{n-1}\,,
\\&
\sup_{\tau\in[0,1]}
\|{\rm d}\Phi^{\pm\tau}_{\chi^{(1)}}(u)[\cdot]\|_{\mathcal{L}(H^{s};H^{s})}
\leq 2\,.
\end{aligned}
\end{equation*}
\end{lemma}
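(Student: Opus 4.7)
The plan is to treat \eqref{accendino} as a semilinear ODE in the Banach space $H^s(\mathbb{T}^d;\mathbb{C})$ and solve it by a Picard fixed point, then close the quantitative estimates by bootstrap and Gronwall; symplecticity will come for free from the Hamiltonian structure.

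First, I would collect the quantitative control on $X_{\chi^{(1)}}$. By \eqref{hamchi111} and Lemma \ref{lem:homoeq}, $\chi^{(1)}=\sum_{k=n}^{2n-3}\chi^{(1)}_k$ with each $\chi^{(1)}_k\in\mathcal{L}_k$ having coefficients bounded by $N^\delta\mu_3(j)^\beta\mu_1(j)^{-q_d}$ with $q_d=3-d\in\{0,1\}$. Hence, applying estimate \eqref{est:vector-field} of Lemma \ref{lem:hamvec} to each $\chi^{(1)}_k$ and summing, for $s\geq s_0(\beta,d)$ and $\|u\|_{H^s}\leq 1$ one has
\begin{equation*}
\|X_{\chi^{(1)}}(u)\|_{H^s}\lesssim_{s,\delta} N^\delta\|u\|_{H^s}^{n-1}.
\end{equation*}
Since $X_{\chi^{(1)}}$ is a polynomial in $(u,\bar u)$, differentiating each monomial and applying the same convolution/Young argument as in Lemma \ref{lem:hamvec} yields
\begin{equation*}
\|dX_{\chi^{(1)}}(u)\|_{\mathcal{L}(H^s;H^s)}\lesssim_{s,\delta} N^\delta\|u\|_{H^s}^{n-2}.
\end{equation*}

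Second, I would set up the Picard scheme on the complete metric space
\begin{equation*}
\mathcal{X}:=\{Z\in C^0([-1,1];H^s(\mathbb{T}^d;\mathbb{C})) \,:\, \|Z-U\|_{L^\infty H^s}\leq \eps_0\},
\end{equation*}
with $U=(u,\bar u)$ and $u\in B_s(0,\eps_0)$, defining $\mathcal{T}[Z](\tau):=U+\int_0^\tau X_{\chi^{(1)}}(Z(\sigma))\,d\sigma$. The two bounds above give on $\mathcal{X}$
\begin{equation*}
\|\mathcal{T}[Z]-U\|_{L^\infty H^s}\leq C_s N^\delta(2\eps_0)^{n-1},\qquad \mathrm{Lip}(\mathcal{T}|_{\mathcal{X}})\leq C_s N^\delta(2\eps_0)^{n-2}.
\end{equation*}
Using $n\geq 3$, both can be forced to be $\leq \eps_0$ and $\leq 1/2$ respectively by requiring $\eps_0\lesssim_{s,\delta} N^{-\delta}$, so Banach's theorem produces a unique fixed point $Z(\tau)=\Phi^\tau_{\chi^{(1)}}(u)\in\mathcal{X}$. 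Iterating the ODE and using the polynomial nature of $X_{\chi^{(1)}}$ shows $Z\in C^k([-1,1];H^s)$ for every $k$; the same argument on $[-1,0]$ (or equivalently, invoking the flow property of the autonomous ODE) handles negative times.

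Third, I would derive the two quantitative estimates. On $\mathcal{X}$ one has $\|\Phi^\sigma(u)\|_{H^s}\leq 2\|u\|_{H^s}$, so integrating the ODE gives immediately
\begin{equation*}
\|\Phi^{\pm\tau}_{\chi^{(1)}}(u)-u\|_{H^s}\leq\int_0^{|\tau|}\|X_{\chi^{(1)}}(\Phi^\sigma(u))\|_{H^s}d\sigma\lesssim_{s,\delta}N^\delta\|u\|_{H^s}^{n-1},
\end{equation*}
which is the first claim. For the differential estimate, I would differentiate \eqref{accendino} in the initial datum to obtain the variational equation $\partial_\tau(d\Phi^\tau)=dX_{\chi^{(1)}}(\Phi^\tau)\circ d\Phi^\tau$ with $d\Phi^0=\mathrm{Id}$; then Gronwall combined with the bound on $dX_{\chi^{(1)}}$ yields
\begin{equation*}
\|d\Phi^{\pm\tau}_{\chi^{(1)}}(u)\|_{\mathcal{L}(H^s;H^s)}\leq\exp\!\Bigl(C_s N^\delta(2\eps_0)^{n-2}\Bigr)\leq 2
\end{equation*}
for $\eps_0$ small enough. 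Symplecticity is automatic from $\partial_\tau(\Phi^\tau)^*\lambda=(\Phi^\tau)^*\mathcal{L}_{X_{\chi^{(1)}}}\lambda$ and Cartan's formula $\mathcal{L}_{X_{\chi^{(1)}}}\lambda=d(\iota_{X_{\chi^{(1)}}}\lambda)=-d(d\chi^{(1)})=0$ (using closedness of $\lambda$).

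No genuine obstacle appears: the argument is a standard flow-existence scheme for a polynomial Hamiltonian vector field. The only quantitative subtlety is the appearance of the $N^\delta$ factor in the vector field bounds (inherited from the small divisor estimates in Proposition \ref{mainNR} via Lemma \ref{lem:homoeq}), which is precisely what forces the smallness threshold $\eps_0\lesssim_{s,\delta}N^{-\delta}$ for Picard and Gronwall to close.
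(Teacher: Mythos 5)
Your proposal is essentially the same argument the paper has in mind; the paper's proof is just a three-sentence sketch ("bounded vector field, standard Banach space ODE theory, multilinearity + smallness, Hamiltonian therefore symplectic"), and you have fleshed it out in full. The ingredients — Lemma~\ref{lem:hamvec} for the vector-field and differential bounds, Picard/Banach for well-posedness and smoothness, Gronwall for $\mathrm{d}\Phi^{\tau}$, and Cartan's formula for symplecticity — all match what the paper intends.

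There is one small slip you should fix. You define the fixed-point ball as
$\mathcal{X}=\{Z:\|Z-U\|_{L^\infty H^s}\leq\eps_0\}$, which only gives $\|Z(\tau)\|_{H^s}\leq\|u\|_{H^s}+\eps_0\leq 2\eps_0$, not the assertion you use later that $\|\Phi^\sigma(u)\|_{H^s}\leq 2\|u\|_{H^s}$. With your choice of ball, integrating the ODE only yields $\|\Phi^{\pm\tau}(u)-u\|_{H^s}\lesssim N^\delta\eps_0^{n-1}$, which is not the claimed bound $\lesssim N^\delta\|u\|_{H^s}^{n-1}$ when $\|u\|_{H^s}\ll\eps_0$. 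The fix is to take the ball of radius proportional to the datum, i.e.\ $\mathcal{X}=\{Z:\|Z\|_{L^\infty H^s}\leq 2\|u\|_{H^s}\}$; then $\mathcal{T}$ maps $\mathcal{X}$ into itself provided $C_sN^\delta(2\|u\|_{H^s})^{n-1}\leq\|u\|_{H^s}$, i.e.\ $C_s2^{n-1}N^\delta\|u\|_{H^s}^{n-2}\leq 1$, which holds under $\eps_0\lesssim_{s,\delta}N^{-\delta}$ since $n\geq 3$, and the contraction constant and the Gronwall bound on $\mathrm{d}\Phi^{\pm\tau}$ close exactly as you wrote. With that adjustment the proof is correct.
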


\begin{proof}
By estimate \eqref{stimacoeffchik} and Lemma \ref{lem:hamvec}
we have that the vector field $X_{\chi^{(1)}}$
is a bounded operator on $H^{s}(\mathbb{T}^{d})$. Hence the flow
$\Phi^{\tau}_{\chi^{(1)}}$ is well-posed by standard 
theory of Banach space
ODE. The estimates of the map and its differential follow by using 
the equation in \eqref{accendino}, the fact that $\chi^{(1)}$ is multilinear 
and the smallness condition on $\eps_0$.
Finally the map is symplectic since 
it is generated by a Hamiltonian vector field.
\end{proof}

We now study how changes the Hamiltonian $H$ in \eqref{expinitial}
under the map $\Phi^{\tau}_{\chi^{(1)}}$.
\begin{lemma}{\bf (The new Hamiltonian 1).}\label{coniugo1}
There is $s_0=s_0(d,r)$ such that for any $N\geq1$, $\delta>0$
and any $s\geq s_0$, if 
$\eps_0\lesssim_{s,\delta} N^{-\delta}$ then
we have that
\begin{equation}\label{newHam1}
H\circ\Phi_{\chi^{(1)}}=Z_{2}+\sum_{k=n}^{2n-3}Z_{k}+\widetilde{K}^{>N}
+\sum_{k=2n-2}^{r-1}\widetilde{K}_{k}+\mathcal{R}_{r}
\end{equation}
where

\noindent
$\bullet$ $\Phi_{\chi^{(1)}}:=(\Phi^{\tau}_{\chi^{(1)}})_{|\tau=1}$ is the flow 
map given by Lemma \ref{lem:flowmap};

\noindent
$\bullet$ the resonant Hamiltonians $Z_k$ are defined in \eqref{resonantHamZZ};

\noindent
$\bullet$ $\widetilde{K}_k$ are in $\mathcal{L}_{k}$ 
 with coefficients $(\widetilde{K}_k)_{\s,j}$ satisfying 
\begin{equation}\label{coeffTildeK}
|(\widetilde{K}_k)_{\s,j}|\lesssim_{\delta} 
N^{\delta} \mu_3(j)^{\beta}\mu_{1}(j)^{-q_{d}}\,,
\quad k=2n-2,\ldots, r-1\,,
\end{equation}
with $q_{d}=3-d$ for $d=2,3$;

\noindent
$\bullet$ the Hamiltonian $\widetilde{K}^{>N}$ and 
 the remainder $\mathcal{R}_r$ satisfy
 \begin{align}
\|X_{\widetilde{K}^{>N}}(u)\|_{H^{s}}&\lesssim_{s,\delta}
N^{-1}\|u\|_{H^{s}}^{2}\,,\label{KmagNN22}
\\
\|X_{\mathcal{R}_r}(u)\|_{H^{s}}&\lesssim_{s,\delta} N^{\delta}\|u\|^{r-1}_{H^{s}}\,,
\qquad \forall u\in B_{s}(0,2\eps_0)\,.\label{stimacalRR} 
\end{align}
\end{lemma}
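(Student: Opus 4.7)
The proof is obtained by the standard Lie series expansion of the pull-back along the flow $\Phi^\tau_{\chi^{(1)}}$. Using the identity $\tfrac{d}{d\tau}F\circ\Phi^\tau_{\chi^{(1)}}=\{\chi^{(1)},F\}\circ\Phi^\tau_{\chi^{(1)}}$ (which follows from the Hamiltonian structure recalled in \eqref{grad-ham}--\eqref{Poissonbrackets}), Taylor's formula with integral remainder gives
\begin{equation*}
H\circ\Phi_{\chi^{(1)}} \;=\; \sum_{\ell=0}^{L-1}\frac{1}{\ell!}\,\mathrm{ad}_{\chi^{(1)}}^{\ell}(H) \;+\; \int_0^1\frac{(1-\tau)^{L-1}}{(L-1)!}\,\mathrm{ad}_{\chi^{(1)}}^{L}(H)\circ\Phi^\tau_{\chi^{(1)}}\,d\tau,
\end{equation*}
where $\mathrm{ad}_{\chi^{(1)}}(F):=\{\chi^{(1)},F\}$. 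Since $\chi^{(1)}=\sum_{k=n}^{2n-3}\chi_k^{(1)}$ has terms of homogeneity $\geq n\geq 3$, every bracket raises the total degree by at least $n-2\geq 1$, so choosing $L$ of order $r$ ensures that all homogeneous terms produced at step $L$ have degree $\geq r$ and can therefore be absorbed into the remainder.

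The main cancellation occurs at $\ell=1$. Substituting $H=Z_2+\sum_{k=n}^{r-1}H_k+R_r$ and rewriting the homological equation \eqref{homoeq1} as $\{\chi_k^{(1)},Z_2\}=Z_k+H_k^{>N}-H_k$ for $n\leq k\leq 2n-3$, the terms $H_k^{\leq N}$ are eliminated, leaving
\begin{equation*}
H+\{\chi^{(1)},H\}=Z_2+\sum_{k=n}^{2n-3}\bigl(Z_k+H_k^{>N}\bigr)+\sum_{k=2n-2}^{r-1}H_k+R_r+\{\chi^{(1)},H-Z_2\}.
\end{equation*}
Every remaining contribution in the Lie series is an iterated Poisson bracket of the form $\mathrm{ad}_{\chi^{(1)}}^{\ell}(G)$ with $G\in\{H_k,R_r\}$ and $\ell\geq 1$ (any bracket $\mathrm{ad}_{\chi^{(1)}}^{\ell}(Z_2)$ with $\ell\geq 2$ is reduced to this form by the homological equation), and its total homogeneity is $\geq 2n-2$. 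Sorting by degree: the homogeneous pieces of degree $k\in[2n-2,r-1]$ (together with the original $H_k$ in this range) are collected into $\widetilde{K}_k$; the pieces of degree $\geq r$, every bracket involving $R_r$, and the integral remainder form $\mathcal{R}_r$; the tails $\sum_{k=n}^{2n-3}H_k^{>N}$ together with the high-frequency projections of the iterated brackets form $\widetilde{K}^{>N}$.

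For the coefficient bound \eqref{coeffTildeK} I apply Lemma \ref{lem:hamvec}(ii) inductively: the bracket of two Hamiltonians with respective decays $\mu_1^{-q_1}$, $\mu_1^{-q_2}$ inherits decay $\mu_1^{-\min(q_1,q_2)}$. Since $\chi_k^{(1)}$ decays like $\mu_1^{-q_d}$ by \eqref{stimacoeffchik} and $H_k$ like $\mu_1^{-2}$ by \eqref{initCoeff}, and $q_d=3-d\leq 2$ for $d=2,3$, every resulting Hamiltonian has coefficient decay at least $\mu_1^{-q_d}$. The $\mu_3^\beta$ growth and the $N^\delta$ factors only accumulate a bounded number of times (at most $L\sim r$), which is absorbed by enlarging $\beta$ and $\delta$. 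The estimate \eqref{KmagNN22} on $\widetilde{K}^{>N}$ then follows from \eqref{est:smooth-vec-fie} applied with $q=2$ to each $H_k^{>N}$, yielding the $N^{-1}$ gain. The remainder estimate \eqref{stimacalRR} combines \eqref{R8} for the intrinsic $R_r$, the vector-field bound \eqref{est:vector-field} for the finitely many iterated brackets of degree $\geq r$, and the flow bounds of Lemma \ref{lem:flowmap} applied to the integral remainder of order $L$.

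The main technical subtlety lies in the separation between $\widetilde{K}_k$ and $\widetilde{K}^{>N}$ when an iterated bracket mixes a factor of $H_k^{\leq N}$ with $\chi^{(1)}$: the resulting monomial can fail to satisfy $\mu_2\leq N$, and one must verify that either some inner factor already produces the smoothing factor $N^{-1}$ (so that the offending piece fits into $\widetilde{K}^{>N}$), or that its coefficient respects the $\mu_1^{-q_d}$ bound defining $\widetilde{K}_k$. This is resolved by carefully tracking, through every Poisson bracket, which input factor hosts the two largest indices $\mu_1$ and $\mu_2$, together with the observation that a bracket which spoils the $\mu_2\leq N$ cut-off necessarily has an inner factor with $\mu_2\gtrsim N$, on which \eqref{est:smooth-vec-fie} applies.
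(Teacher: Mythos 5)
Your proposal follows the same Lie-series strategy as the paper and correctly identifies the key cancellation at order one via the homological equation, the reduction of $\mathrm{ad}_{\chi^{(1)}}^{p}(Z_2)$ for $p\geq 2$ to brackets of lower-degree data, the grouping by homogeneity into $\widetilde K_k$ and $\mathcal{R}_r$, and the coefficient bookkeeping via Lemma~\ref{lem:hamvec}(ii). So the overall architecture matches the paper's proof.

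However, the final paragraph about a ``technical subtlety'' rests on a misreading of what $\widetilde K_k$ is. The lemma only requires $\widetilde K_k\in\mathcal{L}_k$ with the coefficient bound \eqref{coeffTildeK}; it does \emph{not} require $\widetilde K_k$ to be supported on $\mu_2(j)\leq N$. Accordingly, the paper puts the entire degree-$k$ part of every iterated bracket (for $2n-2\leq k\leq r-1$) into $\widetilde K_k$, with no frequency truncation, and the coefficient estimate follows directly from \eqref{stimaPoisbra}: each input has decay $\mu_1^{-q}$ with $q\in\{q_d,1,2\}$, so the bracket retains at least $\mu_1^{-q_d}$. The term $\widetilde K^{>N}$ consists \emph{only} of the first-order tails $\sum_{k=n}^{2n-3}H_k^{>N}$, for which \eqref{est:smooth-vec-fie} with $q=2$ gives $N^{-2+\delta}\leq N^{-1}$. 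Your proposal, by contrast, tries to also route ``high-frequency projections of the iterated brackets'' into $\widetilde K^{>N}$, which then forces you to track whether each such projection inherits an $N^{-1}$ gain. That tracking is never needed: there is nothing to separate, since $\widetilde K_k$ carries no cutoff. This is not a gap in your argument per se, but it is an unnecessary complication that the paper's decomposition avoids entirely.

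Two smaller points worth noting. First, when replacing $\mathrm{ad}_{\chi^{(1)}}^p(Z_2)$ via the homological equation, the base Hamiltonians are $Z_k-H_k^{\leq N}$, and $Z_k$ only decays like $\mu_1^{-1}$ (see \eqref{Z4}); your statement cites only $H_k\sim\mu_1^{-2}$, but since $\min\{q_d,1,2\}=q_d$ for $d=2,3$ the conclusion is unaffected. Second, your choice ``$L$ of order $r$'' is vaguer than necessary: since $r\leq 4n$, the degree count $\mathrm{ad}^L_{\chi^{(1)}}[H_j]$ of minimal homogeneity $L(n-2)+n$ exceeds $r$ already for a fixed $L$ independent of $r$ (the paper takes $L=9$), and keeping $L$ fixed is what lets one absorb the accumulated $N^{L\delta'}$ factors by first applying the sublemmas with $\delta'\leq\delta/L$.
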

\begin{proof}
Fix $\delta>0$ and $\eps_0N^{\delta}$ small enough. We apply Lemma \ref{lem:flowmap} with $\delta\rightsquigarrow\delta'$
to be chosen small enough with respect to $\delta$ we have fixed
(which ensures us that the smallness condition $\eps_0N^{\delta'}\lesssim_{s,\delta'}1$ 
of Lemma \ref{lem:flowmap} is fulfilled).
Let  $\Phi^{\tau}_{\chi^{(1)}}$ be the flow at time $\tau$ of the Hamiltonian 
$\chi^{(1)}$.
We note that
\[
\pa_{\tau}H\circ\Phi^{\tau}_{\chi^{(1)}}=dH(z)[X_{\chi^{(1)}}(z)]_{|z=\Phi^{\tau}_{\chi^{(1)}}}
\stackrel{\eqref{diffHam}, \eqref{Poissonbrackets}}{=}\{\chi^{(1)}, H\}\circ\Phi^{\tau}_{\chi^{(1)}}\,.
\]
Then, for $L\geq2$, we get the Lie series expansion
\[
H\circ\Phi_{\chi^{(1)}}=H+\{\chi^{(1)}, H\}
+\sum_{p=2}^{L}\frac{1}{p!}{\rm ad}_{\chi^{(1)}}^{p}\big[H\big]
+\frac{1}{L!}\int_{0}^{1}(1-\tau)^{L}
{\rm ad}_{\chi^{(1)}}^{L+1}\big[H\big]\circ\Phi^{\tau}_{\chi^{(1)}}\mathrm{d}\tau
\]
where ${\rm ad}_{\chi^{(1)}}^{p}$ is defined recursively as
\begin{equation}\label{adjointiter}
{\rm ad}_{\chi^{(1)}}[H]:=\{\chi^{(1)},H\}\,,\quad 
{\rm ad}_{\chi^{(1)}}^{p}[H]:=
\big\{\chi^{(1)}, {\rm ad}_{\chi^{(1)}}^{p-1}[H] \big\}\,,\;\;p\geq 2\,.
\end{equation}
Recalling the Taylor expansion of the Hamiltonian $H$ in  \eqref{expinitial} we obtain
\begin{align}
H\circ\Phi_{\chi^{(1)}}
&=
Z_{2}+\sum_{k=n}^{2n-3}\Big(H_{k}
+\{\chi_{k}^{(1)},Z_2\}\Big)+\sum_{k=2n-2}^{r-1}H_{k}\nonumber
\\&
+\sum_{p=2}^{L}\frac{1}{p!}{\rm ad}^{p}_{\chi^{(1)}}[Z_2]
+\sum_{j=n}^{r-1}\sum_{p=1}^{L}
\frac{1}{p!}{\rm ad}_{\chi^{(1)}}^{p}[H_{j}]\label{eq:34}
\\&
+\frac{1}{L!}\int_{0}^{1}(1-\tau)^{L}
{\rm ad}_{\chi^{(1)}}^{L+1}[Z_{2}
+\sum_{j=n}^{r-1}H_{j}]\circ\Phi^{\tau}_{\chi^{(1)}}\mathrm{d}\tau\label{eq:35}
\\&
+R_{r}\circ\Phi_{\chi^{(1)}}\,.\label{eq:36}
\end{align}
We study each summand separately. First of all, 
by definition of $\chi_{k}^{(1)}$ (see \eqref{homoeq1} in Lemma \ref{lem:homoeq}),
we deduce that
\begin{equation}\label{emiliapara}
\sum_{k=n}^{2n-3}\big(H_{k}+\{\chi_{k}^{(1)},Z_2\}\big)
=\sum_{k=n}^{2n-3}Z_{k}+\widetilde{K}^{>N}\,,
\quad 
\widetilde{K}^{>N}:=\sum_{k=n}^{2n-3}H_{k}^{>N}.
\end{equation} 
One can check, using Lemma \ref{lem:hamvec} (see \eqref{est:smooth-vec-fie}), 
that $\widetilde{K}^{>N}$ satisfies  \eqref{KmagNN22}.
Consider now the term in \eqref{eq:34}. 
By definition of $\chi^{(1)}$ 
(see  \eqref{homoeq1} and \eqref{hamchi111}),  
we get, for $p=2,\ldots, L$,
\[
{\rm ad}^{p}_{\chi^{(1)}}[Z_2]={\rm ad}^{p-1}_{\chi^{(1)}}\big[
\{\chi^{(1)},Z_{2}\}
\big]\stackrel{\eqref{emiliapara}}{=}
{\rm ad}^{p-1}_{\chi^{(1)}}\big[ \sum_{k=n}^{2n-3}(Z_{k}-H_{k}^{\leq N})\big]\,.
\]
Therefore, by Lemma \ref{lem:hamvec}-$(ii)$ and recalling \eqref{adjointiter}, we get
\[
\eqref{eq:34}=\sum_{k=2n-2}^{L(2n-3)+r-1-2L}\widetilde{K}_{k}
\]
where $\widetilde{K}_k$ are $k$-homogeneous Hamiltonians 
in $\mathcal{L}_{k}$.
In particular, by \eqref{stimaPoisbra} and \eqref{stimacoeffchik} (with $\delta\rightsquigarrow\delta'$), we have
\[
|(\widetilde{K}_k)_{\s,j}|\lesssim_{\delta'} 
N^{L\delta'} \mu_3(j)^{\beta}\mu_{1}(j)^{-q_{d}}
\]
for some $\beta>0$ depending only on $d,n$.
This implies the estimates 
\eqref{coeffTildeK} taking $L\delta'\leq \delta$, where 
$L$ will be fixed later.
Then formula \eqref{newHam1} follows by setting
\begin{equation}\label{calRR}
\mathcal{R}_{r}:=\sum_{k=r}^{L(2n-3)+r-1-2L}\widetilde{K}_{k}+
\eqref{eq:35}+\eqref{eq:36}\,.
\end{equation}
The estimate \eqref{stimacalRR} holds true for $X_{\widetilde{K}_k}$ with 
$k=r,\ldots,L(2n-3)+r-1-2L$, thanks to \eqref{coeffTildeK} and 
Lemma \ref{lem:hamvec}. It remains to study the terms appearing in 
\eqref{eq:35}, \eqref{eq:36}. 
We start with the remainder in \eqref{eq:36}. We note that
\[
X_{R_{r}\circ\Phi}(u)=
({\rm d}\Phi_{\chi^{(1)}})^{-1}(u)\big[X_{R_r}(\Phi_{\chi^{(1)}}(u))\big]\,.
\]
We obtain the estimate \eqref{stimacalRR} on the
 vector field $X_{R_{r}\circ\Phi}$ by using \eqref{R8} and Lemma \ref{lem:flowmap}.
 In order to estimate the term in \eqref{eq:35} 
 we reason as follows. First notice that
 \[
 {\rm ad}_{\chi^{(1)}}^{L+1}[Z_{2}+H_{j}]
 \stackrel{\eqref{emiliapara}}{=}
 {\rm ad}_{\chi^{(1)}}^{L}
 \big[\sum_{k=n}^{2n-3}(Z_{k}-H_{k}^{\leq N})\big]+
 {\rm ad}_{\chi^{(1)}}^{L+1}[H_{j}]:=\mathcal{Q}_{j}
 \]
 with $j=n,\ldots,r-1$.
 Using Lemma
 \ref{lem:hamvec} we deduce that
 \[
 \|X_{\mathcal{Q}_{j}}(u)\|_{H^{s}}
 \lesssim_{\delta'} N^{(L+1)\delta'} \|u\|_{H^{s}}^{(Ln+n-2L)-1}\,.
 \]
 We choose $L=9$ which implies   $Ln+n-2L\geq r$ since $r\leq 4n$.
 Notice also that all the summand in \eqref{eq:35} are of the form
 \[
 \int_{0}^{1}(1-\tau)^{L}\mathcal{Q}_{j}\circ\Phi^{\tau}_{\chi^{(1)}}\mathrm{d}\tau\,.
 \]
 Then we can estimates their vector 
 fields by reasoning as done for the Hamiltonian
 $R_{r}\circ\Phi_{\chi^{(1)}}$. This concludes the proof.
 \end{proof}

\begin{remark}{\bf (Case $d=3$).}
We remark that Theorem \ref{BNFstep} for $d=3$
follows by Lemmata 
\ref{lem:homoeq}, \ref{lem:flowmap}, \ref{coniugo1}, 
by setting
$\tau^{(1)}:=\Phi_{\chi^{(1)}}$ and recalling 
that (see \eqref{def:Mnd}) $M_{d,n}=2n-2$ for $d=3$.
\end{remark}

\vspace{0.5em}
\noindent
{\bf Step 2 if $d=2$.} This step is performed only in the case $d=2$.
Consider the Hamiltonian in \eqref{newHam1}.
Our aim is to reduce in Birkhoff normal form 
all the Hamiltonians $\widetilde{K}_{k}$
of homogeneity $k=2n-2\,\ldots, M_{2,n}-1$ where 
$M_{2,n}$ is given in \eqref{def:Mnd}. 
We follow the same strategy adopted in the previous step.

\begin{lemma}{\bf (Homological equation 2).}\label{lem:homoeq2}
Let $N\geq1$, $\delta>0$ 
and 
consider the Hamiltonian in \eqref{newHam1}.
There exist multilinear Hamiltonians 
$\chi^{(2)}_{k}$, $k=2n-2,\ldots , M_{2,n}-1$ 
in the class $\mathcal{L}_{k}$, with coefficients satisfying
\begin{equation}\label{stimacoeffchikstep2}
|(\chi_{k}^{(2)})_{\s,j}|\lesssim_{\delta}
N^{\delta} \mu_3(j)^{\beta}\,,
\end{equation}
for some $\beta>0$,
such that  
\begin{equation}\label{homoeq2}
\{\chi_{k}^{(2)},Z_{2}\}+\widetilde{K}_{k}=Z_{k}+\widetilde{K}_{k}^{>N}\,,
\quad k=2n-2,\ldots,M_{2,n}-1\,,
\end{equation}
where $\widetilde{K}_k$ are given in Lemma \ref{coniugo1}
and $Z_{k}$ is the resonant Hamiltonian defined as
\begin{equation}\label{resonantHamZZstep2}
Z_{k}:=(\widetilde{K}_{k}^{\leq N})^{\rm res}\,,\quad k=2n-2,\ldots,M_{2,n}-1\,.
\end{equation}
Moreover $Z_{k}$ belongs to $\mathcal{L}_{k}$ and has coefficients satisfying \eqref{Z4}.
\end{lemma}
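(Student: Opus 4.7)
The plan is to mimic the strategy of Lemma \ref{lem:homoeq} in the previous step: split $\widetilde{K}_k$ into resonant, non-resonant low-frequency, and high-frequency parts, and define $\chi_k^{(2)}$ by inverting ${\rm ad}_{Z_2}$ on the non-resonant low-frequency piece. Concretely, writing
\[
\widetilde{K}_k = (\widetilde{K}_k^{\leq N})^{\rm res} + \big(\widetilde{K}_k^{\leq N}-(\widetilde{K}_k^{\leq N})^{\rm res}\big) + \widetilde{K}_k^{>N},
\]
the natural definition is $\chi_k^{(2)} := ({\rm ad}_{Z_2})^{-1}\big[\widetilde{K}_k^{\leq N}-Z_k\big]$, with coefficients
\[
(\chi_k^{(2)})_{\sigma,j} = \frac{(\widetilde{K}_k)_{\sigma,j}}{\ii \sum_{i=1}^{k}\sigma_i\omega_{j_i}}
\]
for indices satisfying the zero-momentum condition, $\mu_2(j)\leq N$, and $\sum_i \sigma_i\omega_{j_i}\neq 0$, zero otherwise. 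With this choice the homological equation \eqref{homoeq2} follows by an explicit computation exactly as in the proof of Lemma \ref{lem:homoeq}, and $Z_k = (\widetilde{K}_k^{\leq N})^{\rm res}$ manifestly belongs to $\mathcal{L}_k$.

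The main content is then the bound \eqref{stimacoeffchikstep2} on $(\chi_k^{(2)})_{\sigma,j}$. From Lemma \ref{coniugo1} we already know that $|(\widetilde{K}_k)_{\sigma,j}|\lesssim_{\delta} N^{\delta}\mu_3(j)^{\beta}\mu_1(j)^{-1}$ (here $q_d = 1$ since $d=2$). To control the small divisor I would invoke Proposition \ref{mainNR} with the parameter $r \rightsquigarrow k$, distinguishing the two non-resonant cases. When $\sigma_1\sigma_2 = +1$ the divisor is bounded below by $\gamma(\langle j_3\rangle\cdots\langle j_k\rangle)^{-9dk^2}$, which only produces a further positive power of $\mu_3(j)$, and combined with the prefactor $\mu_1(j)^{-1}\leq 1$ yields an estimate of the form $N^{\delta}\mu_3(j)^{\beta'}$.

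The crucial case is $\sigma_1\sigma_2 = -1$, where the small divisor degenerates as $\mu_1(j)^{-(d-1+\delta)} = \mu_1(j)^{-(1+\delta)}$ for $d=2$. This would normally produce an intolerable loss of one derivative in $\mu_1$ — and indeed this is exactly what made Step 1 possible only for $d\leq 3$. The key observation that unlocks the second Birkhoff step for $d=2$ is that the $\mu_1^{-1}$ regularizing factor already present in $(\widetilde{K}_k)_{\sigma,j}$ \emph{exactly cancels} this $\mu_1^{1+\delta}$ loss up to a factor $\mu_1(j)^{\delta}$. Since on the support of $\chi_k^{(2)}$ we are restricted to $\mu_2(j)\leq N$, the zero-momentum condition gives $\mu_1(j)\leq (k-1)\mu_2(j)\lesssim N$, hence $\mu_1(j)^{\delta}\lesssim N^{\delta}$, and the resulting bound is of the form $N^{\delta'}\mu_3(j)^{\beta}$ for some $\beta = \beta(d,k)$ after renaming $\delta$; this is precisely \eqref{stimacoeffchikstep2}.

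The main (and essentially only) obstacle is therefore the bookkeeping of the exponents: one must check that the degeneracy $\mu_1^{d-1+\delta}$ in Proposition \ref{mainNR} (iii) is controlled by the regularizing factor $\mu_1^{-q_d}$ inherited from $\widetilde{K}_k$, which is exactly why this step is restricted to $d=2$. In $d=3$ one has $q_d = 0$, no such compensation is available, and a second Birkhoff step is impossible — consistent with the dichotomy built into Theorem \ref{BNFstep} and with the value of $M_{d,n}$ in \eqref{def:Mnd}. Beyond this, the symmetry of the coefficients and the fact that $\chi_k^{(2)}\in \mathcal{L}_k$ are automatic from the construction, so the proof reduces to the small-divisor bookkeeping above.
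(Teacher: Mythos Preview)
Your proposal is correct and follows essentially the same approach as the paper: split $\widetilde{K}_k$ into resonant, non-resonant low-frequency, and high-frequency parts, define $\chi_k^{(2)}:=({\rm ad}_{Z_2})^{-1}[\widetilde{K}_k^{\leq N}-Z_k]$, and bound the coefficients via Proposition~\ref{mainNR} combined with \eqref{coeffTildeK}. Your write-up is in fact more explicit than the paper's, which simply states ``by \eqref{coeffTildeK} and Proposition~\ref{mainNR} we deduce \eqref{stimacoeffchikstep2}'' without spelling out the key cancellation between the $\mu_1(j)^{-q_d}=\mu_1(j)^{-1}$ regularizing factor and the $\mu_1(j)^{d-1+\delta}=\mu_1(j)^{1+\delta}$ small-divisor loss, nor the use of $\mu_1(j)\lesssim N$ to absorb the residual $\mu_1(j)^{\delta}$.
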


\begin{proof}
Recalling Definitions \ref{Ham:class}, \ref{def:adZ2}, we write
\begin{equation*}
\widetilde{K}_{k}=Z_{k}+\big(\widetilde{K}_{k}^{\leq N} -Z_{k}\big)
+\widetilde{K}_{k}^{> N}\,,
\end{equation*}
with $Z_{k}$ as in \eqref{resonantHamZZstep2}, 
and we define 
\begin{equation}\label{resonantHamZZ2step2}
\chi_{k}^{(2)}:=({\rm ad}_{Z_2})^{-1}\big[\widetilde{K}_{k}^{\leq N}-Z_{k}\big]\,,
\quad k=2n-2,\ldots,M_{2,n}-1\,.
\end{equation}
The Hamiltonians $\chi_{k}^{(2)}$
have the form \eqref{HamG}
with coefficients 
\begin{equation}\label{coeffchikstep2}
(\chi_{k}^{(2)})_{\s,j}:=(\widetilde{K}_{k})_{\s,j}
\Big(\ii \sum_{i=1}^{k}\s_i\omega_{j_i}\Big)^{-1}
\end{equation}
for indices $\s\in\{-1,+1\}^{k}$, $j\in(\Z^{d})^{k}$ such that
\[
\sum_{i=1}^{k}\s_i j_i=0\,, \;\;\;\mu_{2}(j)\leq N\;\;\; 
{\rm and}\;\;\;
\sum_{i=1}^{k}\s_i\omega_{j_i}\neq 0\,.
\]
Recalling that we are in the case $d=2$, 
by \eqref{coeffTildeK} and Proposition \ref{mainNR}  we deduce
\eqref{stimacoeffchikstep2}.
The resonant Hamiltonians $Z_{k}$ in \eqref{resonantHamZZstep2}
have the form \eqref{Z4}. The \eqref{homoeq2} follows by an explicit computation.
\end{proof}

\begin{lemma}\label{lem:flowmap2}
Let us define
\begin{equation}\label{hamchi112}
\chi^{(2)}:=\sum_{k=2n-2}^{M_{2,n}-1}\chi^{(2)}_{k}\,.
\end{equation}
There is $s_0=s_0(d,r)$ such that for any $\delta>0$, for any $N\geq1$ and 
any $s\geq s_0$, if
$\eps_0\lesssim_{s,\delta} N^{-\delta}$, 
then
the problem
\[
\left\{\begin{aligned}
&\pa_{\tau}Z(\tau)=X_{{\chi^{(2)}}}(Z(\tau))\\
&Z(0)=U=\vect{u}{\bar{u}}\,,\quad u\in B_{s}(0,\eps_0)
\end{aligned}\right.
\]
has a unique solution $Z(\tau)=\Phi^{\tau}_{\chi^{(2)}}(u)$
belonging to $C^{k}([-1,1];H^{s}(\mathbb{T}^{d}))$ 
for any $k\in\mathbb{N}$. Moreover the map 
$\Phi_{\chi^{(2)}}^{\tau} : B_{s}(0,\eps_0)\to H^{s}(\mathbb{T}^{d})$
is symplectic.
The flow map $\Phi^{\tau}_{\chi^{(2)}}$, 
and its inverse 
$\Phi^{-\tau}_{\chi^{(2)}}$, satisfy
\begin{equation*}
\begin{aligned}
&\sup_{\tau\in[0,1]}\|\Phi^{\pm\tau}_{\chi^{(2)}}(u)-u\|_{H^{s}}
\lesssim_{s,\delta}  N^{\delta}\|u\|_{H^{s}}^{n-1}\,,\\
&\sup_{\tau\in[0,1]}
\|d\Phi^{\pm\tau}_{\chi^{(2)}}(u)[\cdot]\|_{\mathcal{L}(H^{s};H^{s})}
\leq 2\,.
\end{aligned}
\end{equation*}
\end{lemma}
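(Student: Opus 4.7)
The plan is to mimic, almost verbatim, the proof of Lemma \ref{lem:flowmap}, exploiting the coefficient bound \eqref{stimacoeffchikstep2} together with the Sobolev estimates of Lemma \ref{lem:hamvec}. The crucial point is that the bound \eqref{stimacoeffchikstep2} no longer contains any negative power of $\mu_1(j)$ (this is where the $d=2$ hypothesis is used through Proposition \ref{mainNR}, which produced $q_d=3-d=1$ before and $q=0$ here). Thus, applying Lemma \ref{lem:hamvec}(i) with $q=0$ and some fixed $\beta>0$ to each $\chi^{(2)}_k$ and summing over $k=2n-2,\dots,M_{2,n}-1$, we obtain, for $s\geq s_0(d,r,\beta)$ large enough,
\begin{equation*}
\|X_{\chi^{(2)}}(u)\|_{H^{s}}\lesssim_{s,\delta} N^{\delta}\|u\|_{H^s}^{2n-3}, \qquad u\in H^{s}(\mathbb{T}^d),
\end{equation*}
together with the Lipschitz counterpart obtained by polarization, so that $X_{\chi^{(2)}}$ is a smooth locally Lipschitz vector field on the Banach space $H^s(\mathbb{T}^d)$.

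I would then invoke the Cauchy--Lipschitz theorem in the Banach space $H^s(\mathbb{T}^d;\mathbb{C})$ to produce a unique local-in-$\tau$ solution $Z(\tau)$ of the Cauchy problem starting from $u\in B_s(0,\eps_0)$. A standard bootstrap argument closes the interval of existence to $[-1,1]$: as long as $\|Z(\tau)\|_{H^s}\leq 2\eps_0$, the integral formulation yields
\begin{equation*}
\|Z(\tau)-u\|_{H^s}\leq \int_0^{|\tau|}\|X_{\chi^{(2)}}(Z(\s))\|_{H^s}\,\dd\s \lesssim_{s,\delta} N^{\delta}(2\eps_0)^{2n-3}\leq N^{\delta}\eps_0\,\|u\|_{H^s}^{n-1},
\end{equation*}
and the smallness assumption $\eps_0\lesssim_{s,\delta} N^{-\delta}$, taken with a sufficiently small implicit constant, makes the right-hand side much smaller than $\eps_0$, allowing the bootstrap to close. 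This gives the first estimate of the lemma for both the forward and backward flow. Smoothness in $\tau$ to all orders follows by differentiating the equation and re-applying the same scheme.

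The symplecticity of $\Phi^\tau_{\chi^{(2)}}$ is automatic: it is the time-$\tau$ map of the Hamiltonian vector field $X_{\chi^{(2)}}$, so it preserves the symplectic form $\lambda$ defined in the introduction; this is the standard Cartan-style computation, identical to the one used for $\Phi^\tau_{\chi^{(1)}}$. For the bound on the differential $\dd\Phi^{\pm\tau}_{\chi^{(2)}}(u)$, I would differentiate the Cauchy problem with respect to the initial datum, obtaining the linear (in $h$) variational equation
\begin{equation*}
\pa_\tau \bigl(\dd\Phi^\tau_{\chi^{(2)}}(u)[h]\bigr)=\dd X_{\chi^{(2)}}\bigl(Z(\tau)\bigr)\bigl[\dd\Phi^\tau_{\chi^{(2)}}(u)[h]\bigr], \qquad \dd\Phi^0_{\chi^{(2)}}(u)[h]=h,
\end{equation*}
and use the $H^s$-boundedness of $\dd X_{\chi^{(2)}}(Z(\tau))$ (again a consequence of Lemma \ref{lem:hamvec} and of $\|Z(\tau)\|_{H^s}\lesssim \eps_0$) to conclude via Gronwall that the operator norm stays below $2$, provided $\eps_0 N^\delta$ is chosen small enough.

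Since the entire argument is parallel to Lemma \ref{lem:flowmap}, I expect no serious obstacle: the only thing to track carefully is the interplay between $\eps_0$, $N^\delta$ and the homogeneity $2n-3$ of the leading term of $X_{\chi^{(2)}}$, so that a single smallness condition $\eps_0\lesssim_{s,\delta}N^{-\delta}$ suffices to close both the bootstrap for the flow and the Gronwall estimate for its differential.
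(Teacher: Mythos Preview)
Your proposal is correct and follows exactly the approach the paper intends: the paper's proof of this lemma is the single sentence ``It follows reasoning as in the proof of Lemma \ref{lem:flowmap}'', and you have simply spelled out that reasoning (coefficient bound \eqref{stimacoeffchikstep2} $+$ Lemma \ref{lem:hamvec} $\Rightarrow$ bounded vector field $\Rightarrow$ Banach ODE theory $+$ bootstrap/Gronwall). The only cosmetic point is that your displayed inequality $N^{\delta}(2\eps_0)^{2n-3}\leq N^{\delta}\eps_0\|u\|_{H^s}^{n-1}$ is a bit garbled; the clean way is to note that $\|Z(\sigma)\|_{H^s}\lesssim \|u\|_{H^s}$ on the bootstrap interval and then use $\|u\|_{H^s}^{2n-3}\leq \|u\|_{H^s}^{n-1}$ since $2n-3\geq n-1$ and $\|u\|_{H^s}<1$.
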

\begin{proof}
It follows reasoning as in the proof of Lemma \ref{lem:flowmap}.
\end{proof}

We have the following.

\begin{lemma}{\bf (The new Hamiltonian 2).}\label{coniugo2}
There is $s_0=s_0(d,r)$ such that for any $N\geq1$, $\delta>0$
and any $s\geq s_0$, if 
$\eps_0\lesssim_{s,\delta} N^{-\delta}$ then
we have that
%
$H\circ\Phi_{\chi^{(1)}}\circ\Phi_{\chi^{(2)}}$
has the form \eqref{HamTau1}
and satisfies items $(i),(ii),(iii)$ of Theorem \ref{BNFstep}.
\end{lemma}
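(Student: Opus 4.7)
The plan is to repeat the argument of Lemma \ref{coniugo1}, now starting from the Hamiltonian \eqref{newHam1} and applying the flow $\Phi_{\chi^{(2)}}$ of \eqref{hamchi112}. Writing the Lie series expansion with a truncation parameter $L$ (to be fixed at the end), we obtain
\begin{equation*}
(H\circ\Phi_{\chi^{(1)}})\circ\Phi_{\chi^{(2)}} = H\circ\Phi_{\chi^{(1)}} + \{\chi^{(2)}, H\circ\Phi_{\chi^{(1)}}\} + \sum_{p=2}^{L}\frac{1}{p!}\mathrm{ad}^{p}_{\chi^{(2)}}[H\circ\Phi_{\chi^{(1)}}] + \mathcal{R}'_r,
\end{equation*}
where $\mathcal{R}'_r$ is the integral remainder of the expansion. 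Expanding $H\circ\Phi_{\chi^{(1)}}$ via \eqref{newHam1}, the first-order bracket contains $\sum_{k=2n-2}^{M_{2,n}-1}(\{\chi^{(2)}_k,Z_2\}+\widetilde{K}_k)$, which by the homological equation \eqref{homoeq2} equals $\sum_{k=2n-2}^{M_{2,n}-1}(Z_k+\widetilde{K}_k^{>N})$. This is exactly what is needed to produce the resonant terms of orders $2n-2,\dots,M_{2,n}-1$ in \eqref{HamTau1}.

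All other contributions must be classified as belonging to $K_k$ with $k\geq M_{2,n}$, to $K^{>N}$, or to $\tilde R_r$. The resonant $Z_k$'s already produced commute with $Z_2$. The remaining first-order brackets $\{\chi^{(2)},Z_j\}$ for $j=n,\dots,2n-3$ and $\{\chi^{(2)},\widetilde K_{k'}\}$ for $k'\geq M_{2,n}$, together with all iterated brackets $\mathrm{ad}^{p}_{\chi^{(2)}}[\cdot]$ for $p\geq 2$, give homogeneous polynomials whose coefficient bounds are obtained by iterating Lemma \ref{lem:hamvec}(ii) on \eqref{stimacoeffchikstep2}, \eqref{coeffTildeK}, \eqref{Z4}: since $\chi^{(2)}$ already lacks any $\mu_1^{-1}$ gain, a single bracket destroys whatever $\mu_1$-decay the argument had, producing the bound $N^\delta\mu_3(j)^\beta$ required by \eqref{K6} (the factor $N^{L\delta'}$ generated along the way is absorbed into $N^\delta$ by choosing $\delta'$ small). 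A parity check confirms that these new terms are indeed of order $\geq M_{2,n}$: when $n$ is even the minimum order is $(2n-2)+n-2 = 3n-4 = M_{2,n}$; when $n$ is odd one has $Z_n=0$ by Remark \ref{Rmk:resonant}, so the lowest non-vanishing bracket with a $Z_j$ occurs at $(2n-2)+(n+1)-2 = 3n-3 = M_{2,n}$.

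The pieces of homogeneity $\geq r$ coming from the high iterated brackets, from $\mathcal{R}_r$ in \eqref{newHam1}, and from $\mathcal{R}'_r$ are regrouped into $\tilde R_r$; the high-frequency remainder is $K^{>N}:=\widetilde K^{>N}+\sum_{k=2n-2}^{M_{2,n}-1}\widetilde K_k^{>N}$. The estimates \eqref{KmagNN} and \eqref{R8tilde} then follow from Lemma \ref{lem:hamvec}(i) combined with Lemma \ref{lem:flowmap2} and the chain rule for $\mathcal{R}'_r\circ\Phi_{\chi^{(2)}}$, exactly as in the final part of Lemma \ref{coniugo1}. Setting $\tau^{(1)}:=\Phi_{\chi^{(1)}}\circ\Phi_{\chi^{(2)}}$ (and taking $\tau^{(0)}$ to be its inverse restricted to a smaller ball) produces \eqref{diagram}, and the closeness estimate \eqref{tau} follows by composing the estimates of Lemmas \ref{lem:flowmap} and \ref{lem:flowmap2}.

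The main obstacle is bookkeeping: one must verify that no term of order $<M_{2,n}$ beyond the $Z_k$'s is created (which is where the parity/$Z_n=0$ argument enters, and is ultimately the reason for the $+1$ in the definition of $M_{2,n}$ for $n$ odd in \eqref{def:Mnd}), and one must choose $L$ large enough (e.g.\ $L=9$ as in Lemma \ref{coniugo1}, using $r\leq 4n$) so that the untruncated Lie series drops into $\tilde R_r$, while keeping the accumulated loss $N^{L\delta'}$ under $N^\delta$.
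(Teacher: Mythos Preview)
Your approach is the paper's: Lie expansion of \eqref{newHam1} under $\Phi_{\chi^{(2)}}$, homological equation \eqref{homoeq2} to produce the new $Z_k$'s, iterated Poisson brackets estimated via Lemma~\ref{lem:hamvec}(ii), and the parity argument ($Z_n=0$ for $n$ odd) to check that the minimal new degree is $M_{2,n}$. The choice of $L$ and the treatment of $\mathcal{R}_r$, $\mathcal{R}'_r$ are also as in the paper.

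There is, however, one concrete bookkeeping slip. You set $K^{>N}:=\widetilde K^{>N}+\sum_{k=2n-2}^{M_{2,n}-1}\widetilde K_k^{>N}$, but $\widetilde K^{>N}$ sits inside $H\circ\Phi_{\chi^{(1)}}$ \emph{before} composition with $\Phi_{\chi^{(2)}}$. If you expand it in the Lie series along with everything else, you must also place the brackets $\mathrm{ad}^{p}_{\chi^{(2)}}[\widetilde K^{>N}]$ for $p\geq 1$ somewhere, and you do not. For $n$ odd the lowest such term, $\{\chi^{(2)}_{2n-2},H_n^{>N}\}$, has degree $3n-4<M_{2,n}=3n-3$, so it cannot go into the $K_k$'s; it is not resonant; and it does not obviously inherit the $N^{-1}$ smallness of \eqref{KmagNN}, since the truncation condition $\mu_2>N$ is not preserved under Poisson bracket with $\chi^{(2)}$ (which is supported on $\mu_2\leq N$). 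The paper avoids this by \emph{not} expanding $\widetilde K^{>N}$: it keeps the full composition and sets $K^{>N}:=\widetilde K^{>N}\circ\Phi_{\chi^{(2)}}+\widetilde K_{+}^{>N}$ (see \eqref{finalsmooth}), then estimates $X_{\widetilde K^{>N}\circ\Phi_{\chi^{(2)}}}=(\mathrm{d}\Phi_{\chi^{(2)}})^{-1}\circ X_{\widetilde K^{>N}}\circ\Phi_{\chi^{(2)}}$ directly from \eqref{KmagNN22} and Lemma~\ref{lem:flowmap2}. With this correction (and after also noting that the omitted first-order brackets $\{\chi^{(2)},\widetilde K_{k'}\}$ for $2n-2\leq k'\leq M_{2,n}-1$ land harmlessly in degree $\geq 4n-6\geq M_{2,n}$) your argument matches the paper's.
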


\begin{proof}
We fix $\delta>0$ and we apply Lemmata \ref{coniugo1}, 
\ref{lem:homoeq2} with $\delta\rightsquigarrow \delta'$ with $\delta'$
to be chosen small enough with respect to $\delta$ fixed here.
Reasoning as in the previous step we have 
(recall \eqref{def:Mnd}, \eqref{adjointiter} and \eqref{newHam1})
\begin{align}
H\circ&\,\Phi_{\chi^{(1)}}\circ\Phi_{\chi^{(2)}}=
Z_{2}+\sum_{k=n}^{2n-3}Z_{k}
+\sum_{k=2n-2}^{M_{2,d}-1} \Big(\widetilde{K}_{k}+
\{\chi_{k}^{(2)},Z_2\}\Big)+\sum_{k=M_{2,n}}^{r-1}\widetilde{K}_{k}\label{eq:488}
\\&
+\widetilde{K}^{>N}\circ\Phi_{\chi^{(2)}}\label{eq:4888}
\\&
+\sum_{p=2}^{L}\frac{1}{p!}{\rm ad}_{\chi^{(2)}}^{p}[Z_2]+
\sum_{p=1}^{L}\frac{1}{p!}{\rm ad}_{\chi^{(2)}}^{p}\big[
\sum_{k=n}^{2n-3}Z_{k}+\sum_{k=2n-2}^{r-1}\widetilde{K}_{k}
\big]\label{eq:indefinit}
\\&
+
\mathcal{R}_{r}\circ\Phi_{\chi^{(2)}}
+\frac{1}{L!}\int_{0}^{1}(1-\tau)^{L}
{\rm ad}^{L+1}_{\chi^{(2)}}[Z_2]\circ\Phi_{\chi^{(2)}}^{\tau}d\tau\label{eq:47}
\\&
+\frac{1}{L!}\int_{0}^{1}(1-\tau)^{L}{\rm ad}^{L+1}_{\chi^{(2)}}
\Big[\sum_{k=n}^{2n-3}Z_{k}
+\sum_{k=2n-2}^{r-1}\widetilde{K}_{k}\Big]\circ\Phi_{\chi^{(2)}}^{\tau}\mathrm{d}\tau\label{eq:48}\,,
\end{align}
where
$\Phi^{\tau}_{\chi^{(2)}}$, $\tau\in[0,1]$, is the flow at time $\tau$ of the Hamiltonian 
$\chi^{(2)}$.
We study each summand separately. First of all, thanks to  
\eqref{homoeq2},
we deduce that
\begin{equation}\label{emiliapara2}
\sum_{k=2n-2}^{M_{2,n}-1}\big(\widetilde{K}_{k}+\{\chi_{k}^{(2)},Z_2\}\big)
=\sum_{k=2n-2}^{M_{2,n}-1}Z_{k}+\widetilde{K}_{+}^{>N}\,,
\quad
\widetilde{K}_{+}^{>N}:=\sum_{k=2n-2}^{M_{2,n}-1}\widetilde{K}_{k}^{>N}\,.
\end{equation}
One can check, using Lemma \ref{lem:hamvec}, 
that $\widetilde{K}_{+}^{>N}$ satisfies
\begin{equation}\label{parmigiano}
\|X_{\widetilde{K}_{+}^{>N}}\|_{H^{s}}\lesssim_{s,\delta} N^{-1+\delta'}\|u\|_{H^{s}}^{2n-3}\,.
\end{equation}
Consider now the terms in \eqref{eq:indefinit}.
First of all notice that we have
\[
{\rm ad}_{\chi^{(2)}}^{p}[Z_2]\stackrel{\eqref{emiliapara2}}{=}
\sum_{k=2n-2}^{M_{2,n}-1}{\rm ad}^{p-1}_{\chi^{(2)}}\big[
Z_{k}-\widetilde{K}_{k}^{\leq N}
\big]\,, \;\;\;p=2,\ldots,L\,.
\]
The Hamiltonian above has a homogeneity at least of degree
$4n-6$ which actually is larger or equal to $M_{2,n}$ (see \eqref{def:Mnd}).
The terms with lowest homogeneity in the sum \eqref{eq:indefinit}
have degree exactly $M_{2,n}$ and come from the 
term ${\rm ad}_{\chi^{(2)}}\big[\sum_{k=n}^{2n-3}Z_{k}\big]$
recalling that (see Remark \ref{Rmk:resonant}) if $n$ is odd then 
$Z_{n}\equiv0$.
Then, by \eqref{stimacoeffchikstep2}, \eqref{coeffTildeK} and 
Lemma \ref{lem:hamvec}-$(ii)$, we get
\[
\eqref{eq:indefinit}=\sum_{k=M_{2,n}}^{L(M_{2,n}-1)+r-1-2L}\widetilde{K}^{+}_{k}
\]
where $\widetilde{K}^{+}_k$ are $k$-homogeneous Hamiltonians 
of the form \eqref{K6} with coefficients 
satisfying 
\begin{equation}\label{coeffTildeKstep2}
|(\widetilde{K}^{+}_k)_{\s,j}|\lesssim_{\delta'} N^{(L+1)\delta'} \mu_3(j)^{\beta}\,,
\end{equation}
for some $\beta>0$.
By the discussion above, using formul\ae \, \eqref{eq:488}-\eqref{eq:48},
we obtain that  the Hamiltonian 
$H\circ\Phi_{\chi^{(1)}}\circ\Phi_{\chi^{(2)}}$
has the form \eqref{HamTau1}
with (recall \eqref{resonantHamZZ}, \eqref{resonantHamZZstep2}, 
\eqref{eq:4888}, \eqref{emiliapara2})
\begin{align}
Z_{k}^{\leq N}&:=Z_{k}\,,\qquad \;\;
K_{k}:=\widetilde{K}_{k}+\widetilde{K}_{k}^{+}\,,\qquad k=M_{2,n},\ldots,r-1\,,
\label{finalresonant}\\
K^{>N}&:=\widetilde{K}^{>N}\circ\Phi_{\chi^{(2)}}+ \widetilde{K}_{+}^{>N}
\label{finalsmooth}
\end{align}
and with remainder $\tilde{R}_{r}$ defined as
\begin{equation}\label{calRRstep2}
\tilde{R}_{r}:=\sum_{k=r}^{L(M_{2,n}-1)+r-1-2L}\widetilde{K}_{k}^{+}+
\eqref{eq:47}+\eqref{eq:48}\,.
\end{equation}
%
Recalling \eqref{resonantHamZZ}, \eqref{resonantHamZZstep2}
and the estimates \eqref{initCoeff},  \eqref{coeffTildeK}
we have that $Z_{k}^{\leq N}$ in \eqref{finalresonant}
satisfies the condition of item $(i)$ of Theorem \ref{BNFstep}.
Similarly $K_{k}$ in \eqref{finalresonant}  satisfies 
\eqref{K6} thanks to \eqref{coeffTildeK} and 
\eqref{coeffTildeKstep2} as long as $\delta'$ is sufficiently small.
The remainder $K^{>N}$ in \eqref{finalsmooth} satisfies the
bound \eqref{KmagNN} using \eqref{parmigiano},  \eqref{KmagNN22}
and Lemma \ref{lem:hamvec}-$(i)$. 
It remains to show that the remainder defined in \eqref{calRRstep2}
satisfies the estimate \eqref{R8tilde}.
The claim follows for the terms $\widetilde{K}_{k}^{+}$
for $k=r,\ldots, L(M_{2,n}-1)+r-1-2L$
by using \eqref{coeffTildeKstep2} and Lemma \ref{lem:hamvec}. 
For the remainder in \eqref{eq:47}, \eqref{eq:48} one can reason 
following almost word by word 
the proof of the estimate of the vector field of $\mathcal{R}_r$
in \eqref{calRR} in the previous step. In this case we choose $L+1=8$ which implies
$L+1\geq (r+n)/(2n-4)$.
\end{proof}

Theorem \ref{BNFstep} follows by Lemmata \ref{coniugo1}, \ref{coniugo2}
setting $\tau^{(1)}:=\Phi_{\chi^{(1)}}\circ\Phi_{\chi^{(2)}}$. The bound 
\eqref{tau} follows by Lemmata \ref{lem:flowmap} and \ref{lem:flowmap2}.

\section{The modified energy step}\label{section:modif}

In this section we construct a modified energy 
which is an approximate constant of motion 
for the Hamiltonian system of 
$H\circ\tau^{(1)}$ in \eqref{HamTau1}, when $d=2,3$, and for the Hamiltonian $H$
in \eqref{expinitial} when  $d\geq4$.
For compactness we shall write, for $s\in \mathbb{R}$,
\begin{equation}\label{HamNN}
N_{s}(u):=\|u\|_{H^{s}}^{2}=\sum_{j\in\mathbb{Z}^{2}}\langle j\rangle^{2s}|u_{j}|^{2}\,,
\end{equation}
for $u\in H^{s}(\mathbb{T}^{2};\mathbb{C})$.
For $d\geq2$ and $n\in \mathbb{N}$ we define (recall \eqref{def:Mnd})
\begin{equation}\label{def:Mndtilde}
\widetilde{M}_{d,n}:=\left\{
\begin{aligned}
&M_{d,n}+n-1 \quad  n\;\; {\rm odd}\\
&M_{d,n}+n-2 \quad  n\;\; {\rm even}.
\end{aligned}\right.
\end{equation}

\begin{proposition}\label{modifiedstep}
 There exists $\beta=\beta(d,n)>0$ such that for any $\delta>0$, any  $N\geq N_1>1$
and any $s\geq\tilde{s}_0$, 
for some $\tilde{s}_0=\tilde{s}_0(\beta)>0$, if 
$\eps_0 \lesssim_{s,\delta} N^{-\delta}$, there are  multilinear maps $E_{k}$, 
$k=M_{d,n},\ldots, \widetilde{M}_{d,n}-1$, 
in the class $\mathcal{L}_{k}$
such that the following holds:

\noindent
$\bullet$ the coefficients 
$(E_{k})_{\sigma,j}$ satisfies
\begin{equation}\label{coeffE6}
|(E_{k})_{\sigma,j}|\lesssim_{s,\delta}N^{\delta} N_1^{\kappa_{d}}
\mu_{3}(j)^{\beta} \mu_1(j)^{2s}\,,
\end{equation}
for $\sigma\in \{-1,1\}^{k}$, $j\in(\mathbb{Z}^{d})^{k}$, 
$k=M_{d,n},\ldots,\widetilde{M}_{d,n}-1$, where
\begin{equation}\label{def:mud}
\kappa_d:=0\; {\rm if}\; d=2\,,\;\;\;\;\;\; 
\kappa_d:=1\; {\rm if}\; d=3\,,\;\;\;\;\;\; 
\kappa_d:=d-4\; {\rm if}\; d\geq4\,.
\end{equation}

\noindent
$\bullet$ for any $u\in B_{s}(0,2\eps_0)$ setting 
\begin{equation}\label{EEE}
E(u):=\sum_{k=M_{d,n} }^{\widetilde{M}_{d,n}-1}E_{k}(u)\,.
\end{equation}
one has
\begin{equation}\label{energyestimate}
\begin{aligned}
|\{N_{s}+E,H\circ\tau^{(1)}\}|&\lesssim_{s,\delta} 
N_1^{\kappa_d} N^{\delta}
\big(\|u\|^{\widetilde{M}_{d,n}}_{H^{s}}
+N^{-1}\|u\|^{M_{d,n}+n-2}_{H^{s}}\big)\\
&+N_1^{-\mathfrak{s}_d+\delta}\|u\|_{H^{s}}^{M_{d,n}}
+N^{-\mathfrak{s}_d+\delta}\|u\|_{H^{s}}^{n}\,,
\end{aligned}
\end{equation}
where 
\begin{equation}\label{def:fraks}
\mathfrak{s}_{d}:=1\,,\;\;\;{\rm for }\;\; d=2,3\,,\;\;\; {\rm and}\;\;\;
\mathfrak{s}_{d}:=3\,,\;\;\;{\rm for }\;\; d\geq4\,.
\end{equation}
\end{proposition}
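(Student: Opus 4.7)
The plan is to construct $E=\sum_{k=M_{d,n}}^{\widetilde{M}_{d,n}-1}E_k$ by a homological-equation procedure analogous to the Birkhoff step of Section 3, but now with $N_s$ playing the role of the Hamiltonian we want to preserve. For $d=2,3$ I would work with the conjugated Hamiltonian $H\circ\tau^{(1)}$ provided by Theorem \ref{BNFstep}; for $d\geq 4$ no Birkhoff step is available (the loss $\mu_1^{d-3+\delta}$ in \eqref{chiCoeff} is too severe) and one works directly with the expansion \eqref{expinitial}. In both cases $\{N_s,Z_2\}=0$ trivially, and each resonant $Z_k^{\leq N}$ commutes with $N_s$ as well: by Definition \ref{def_res} any resonant monomial pairs its frequencies $(j_{\rho_{2i-1}},j_{\rho_{2i}})$ with $\sigma_{\rho_{2i-1}}=-\sigma_{\rho_{2i}}$ and $|j_{\rho_{2i-1}}|=|j_{\rho_{2i}}|$, so $\sum_i\sigma_i\langle j_i\rangle^{2s}=0$ on the monomial. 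Consequently only the terms $\{N_s,K_k\}$ for $k=M_{d,n},\ldots,r-1$ (respectively $\{N_s,H_k\}$ for $d\geq 4$) need to be compensated by $\{E,Z_2\}$, while $\{N_s,K^{>N}\}$ and $\{N_s,\tilde R_r\}$ are estimated directly via \eqref{KmagNN} and \eqref{R8tilde}.

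For each such $k$ I split $K_k=K_k^{(+1)}+K_k^{(-1)}$ as in Definition \ref{def:adZ2}. On $K_k^{(+1)}$ the small-divisor estimate (ii) of Proposition \ref{mainNR} is uniform in $\mu_1$, so I would solve
\[
\{E_k^{(+1)},Z_2\}=-\{N_s,K_k^{(+1)}\}
\]
without any truncation; combining the trivial bound $|\sum_i\sigma_i\langle j_i\rangle^{2s}|\lesssim_s\mu_1^{2s}$ with \eqref{K6} and (ii) yields coefficients bounded by $N^\delta\mu_1^{2s}\mu_3^\beta$. On $K_k^{(-1)}$, estimate (iii) of Proposition \ref{mainNR} loses a factor $\mu_1^{d-1+\delta}$, but this is compensated by the commutator trick already used in Section \ref{scheme}: since $\sigma_1=-\sigma_2$ and the zero-momentum condition forces $|j_1-j_2|\lesssim\mu_3$, we have
\[
\Big|\sum_i\sigma_i\langle j_i\rangle^{2s}\Big|\lesssim_s\mu_1^{2s-1}\mu_3.
\]
Introducing the truncation $\mu_1\leq N_1$, the equation
\[
\{E_k^{(-1)},Z_2\}=-\{N_s,K_k^{(-1),\,\mu_1\leq N_1}\}
\]
produces $E_k^{(-1)}$ whose coefficients obey $N^\delta N_1^{d-2+\delta}\mu_1^{2s}\mu_3^\beta$, giving $\kappa_d=d-2$ for $d=2,3$. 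For $d\geq 4$ one uses $H_k$ instead of $K_k$, and the extra $\mu_1^{-2}$ decay in \eqref{initCoeff} reduces the exponent to $\kappa_d=d-4$, establishing \eqref{coeffE6}.

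Next I estimate the pieces left uncancelled by $\{E,Z_2\}$. The truncation tail $\{N_s,K_k^{(-1),\,\mu_1>N_1}\}$ is controlled again by the commutator trick: on the support of $K_k^{(-1)}$, zero momentum gives $\mu_1\lesssim\mu_2$, so on $\mu_1>N_1$ also $\mu_2\gtrsim N_1$, and therefore
\[
\mu_1^{2s-1}\mu_3\lesssim\mu_2^{-\mathfrak{s}_d}\,\mu_1^s\mu_2^s\mu_3
\]
where the additional $\mu_1^{-2}\lesssim\mu_2^{-2}$ coming from \eqref{initCoeff} accounts for $\mathfrak{s}_d=3$ in the regime $d\geq 4$. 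Summing against $\prod_i|u_{j_i}|$ produces the $N_1^{-\mathfrak{s}_d+\delta}\|u\|_{H^s}^{M_{d,n}}$ contribution. The high-frequency tail $\{N_s,K^{>N}\}$ (respectively $\sum_k\{N_s,H_k^{>N}\}$ for $d\geq 4$) is handled by \eqref{KmagNN} (respectively by the same commutator estimate combined with $\mu_2>N$ and \eqref{initCoeff}) to produce the $N^{-\mathfrak{s}_d+\delta}\|u\|_{H^s}^n$ contribution, and \eqref{R8tilde} absorbs $\{N_s,\tilde R_r\}$ into the $\|u\|_{H^s}^{\widetilde{M}_{d,n}}$ term. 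Finally, the brackets $\{E_k,Z_\ell^{\leq N}\}$, $\{E_k,K_\ell\}$, $\{E_k,K^{>N}\}$, $\{E_k,\tilde R_r\}$ that constitute $\{E,H\circ\tau^{(1)}\}-\{E,Z_2\}$ each produce a homogeneous Hamiltonian of order at least $k+n-2$; the specific choice of $\widetilde{M}_{d,n}$ in \eqref{def:Mndtilde} is made precisely so that all of them contribute to the $N_1^{\kappa_d}N^\delta\|u\|_{H^s}^{\widetilde{M}_{d,n}}$ term in \eqref{energyestimate}.

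The main obstacle is the $\mu_1^{2s}$ growth of the coefficients of $E_k$, which prevents a direct application of estimate \eqref{est:vector-field} of Lemma \ref{lem:hamvec}. To bound $E_k$ itself (and its Poisson brackets with other pieces of the Hamiltonian) one has to exploit zero momentum ($|j_1\pm j_2|\lesssim\mu_3$ on the supports of $E_k^{(\pm 1)}$) to pair $\mu_1$ with $\mu_2$ and redistribute one factor of $\langle j_1\rangle^s$ onto $\langle j_2\rangle^s$; this is the commutator-trick philosophy applied once more, now at the level of the modified energy itself, and constitutes the technical heart of the proof. The second delicate point is pure bookkeeping: every Poisson bracket generated during the iteration must be verified to fit under the bound $N_1^{\kappa_d}N^\delta\|u\|^{\widetilde{M}_{d,n}}$ without picking up powers of $N$ beyond $N^\delta$, and the exponent $\mathfrak{s}_d$ on the right of \eqref{energyestimate} must be tracked consistently between the cases $d=2,3$ (where the $\mu_1^{-2}$ regularization of \eqref{initCoeff} is unavailable after the Birkhoff conjugation) and $d\geq 4$.
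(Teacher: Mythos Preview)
Your proposal is correct and follows essentially the same route as the paper: the same splitting $K_k=K_k^{(+1)}+K_k^{(-1)}$ (respectively $H_k$ for $d\geq 4$), the same homological equations $E_k^{(\pm1)}=(\mathrm{ad}_{Z_2})^{-1}\{N_s,\cdot\}$ with the $N_1$--truncation only on the $(-1)$ part, and the same use of Proposition \ref{mainNR} together with the commutator trick encoded in Lemma \ref{lem:energia}. One small bookkeeping correction: the bracket $\{E,K^{>N}\}$ does not feed the $\|u\|_{H^s}^{\widetilde{M}_{d,n}}$ term but rather the $N_1^{\kappa_d}N^{-1+\delta}\|u\|_{H^s}^{M_{d,n}+n-2}$ term (it carries the extra $N^{-1}$ from \eqref{KmagNN} and has degree $\geq M_{d,n}+n-2$, which equals $\widetilde{M}_{d,n}$ only when $n$ is even); this term is already present on the right of \eqref{energyestimate}, so the overall estimate is unaffected.
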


We need the following technical lemma.

\begin{lemma}{\bf (Energy estimate).}\label{lem:energia}
Let $N\geq1$, $0\leq \delta<1$, $p\in \mathbb{N}$, $p\geq3$. 
Consider the Hamiltonians $N_{s}$ in \eqref{HamNN}, 
$G_p\in \mathcal{L}_{p}$
and write $G_{p}=G_{p}^{(+1)}+G_{p}^{(-1)}$ (recall Definition \ref{def:adZ2}). 
Assume also that  the coefficients
of $G_p$ satisfy
\begin{equation}\label{coeffGp222}
|(G^{(\eta)}_{p})_{\sigma,j}|\leq C
 N^{\delta}\mu_{3}(j)^{\beta}\mu_{1}(j)^{-q}\,,
 \qquad \forall \s\in\{-1,+1\}^{p}\,,\; j\in \mathbb{Z}^{d}\,,\eta\in\{-1,+1\},
\end{equation}
for some $\beta>0$, $C>0$ and $q\geq0$.  
We have that the Hamiltonian $Q_{p}^{(\eta)}:=\{N_s,G_{p}^{(\eta)}\}$, $\eta\in\{-1,1\}$,
belongs to the class $\mathcal{L}_{p}$ 
and has coefficients satisfying 
\begin{equation}\label{coeffQp}
|(Q_{p}^{(\eta)})_{\sigma,j}|\lesssim_{s} 
C N^{\delta}\mu_{3}(j)^{\beta+2}\mu_1(j)^{2s}
\mu_1(j)^{-q-\alpha}\,,\qquad 
\alpha:=\left\{
\begin{aligned}
& 1\,,\;\;\; {\rm if }\;\; \eta=-1\\
&0\,, \;\;\; {\rm if}\;\; \eta=+1\,.
\end{aligned}
\right.
\end{equation}
\end{lemma}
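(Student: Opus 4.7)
The plan is to reduce everything to bounding a single multiplier, namely $\Lambda(\sigma,j):=\sum_{i=1}^p \sigma_i \langle j_i\rangle^{2s}$, on the support of $G_p^{(\eta)}$. First I would use the fact that $N_s=\sum_k \langle k\rangle^{2s} u_k \bar u_k$ is diagonal in the Fourier basis; applying the Poisson bracket formula \eqref{Poissonbrackets} to each monomial $u_{j_1}^{\sigma_1}\cdots u_{j_p}^{\sigma_p}$ yields directly
\[
\{N_s,\,u_{j_1}^{\sigma_1}\cdots u_{j_p}^{\sigma_p}\}=\ii\,\Lambda(\sigma,j)\,u_{j_1}^{\sigma_1}\cdots u_{j_p}^{\sigma_p}.
\]
Since $\Lambda$ is invariant under permutations of the indices and the decomposition $G_p=G_p^{(+1)}+G_p^{(-1)}$ is stable under permutations, $Q_p^{(\eta)}$ inherits the symmetry of $G_p^{(\eta)}$ and lies in $\mathcal{L}_p$, with coefficients $(Q_p^{(\eta)})_{\sigma,j}=\ii\,\Lambda(\sigma,j)\,(G_p^{(\eta)})_{\sigma,j}$. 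It thus suffices to estimate $\Lambda$ suitably and combine with the assumption \eqref{coeffGp222}.

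For the case $\eta=+1$ there is nothing delicate: the crude bound $|\Lambda(\sigma,j)|\leq p\,\mu_1(j)^{2s}$ combined with \eqref{coeffGp222} and the trivial inequality $\mu_3(j)^{\beta}\leq\mu_3(j)^{\beta+2}$ yields at once \eqref{coeffQp} with $\alpha=0$.

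The entire point of the lemma is the case $\eta=-1$, where the commutator trick advertised in the introduction enters. By Definition \ref{def:adZ2}, on the support of $G_p^{(-1)}$ any pair of positions realizing $\mu_1(j)$ and $\mu_2(j)$ must carry opposite signs in $\sigma$; by symmetry of the coefficients I would assume $|j_1|=\mu_1(j)$, $|j_2|=\mu_2(j)$, $\sigma_1=+1$ and $\sigma_2=-1$, so that
\[
\Lambda(\sigma,j)=\bigl(\langle j_1\rangle^{2s}-\langle j_2\rangle^{2s}\bigr)+\sum_{i\geq 3}\sigma_i\langle j_i\rangle^{2s}.
\]
The mean value theorem together with the polarization identity $|j_1|^2-|j_2|^2=(j_1+j_2)\cdot(j_1-j_2)$ gives
\[
\bigl|\langle j_1\rangle^{2s}-\langle j_2\rangle^{2s}\bigr|\lesssim_s \mu_1(j)^{2s-2}|j_1+j_2|\,|j_1-j_2|,
\]
and the zero momentum condition $\sum_i \sigma_i j_i=0$ produces the key bound $|j_1-j_2|=|\sum_{i\geq 3}\sigma_i j_i|\lesssim_p \mu_3(j)$, while $|j_1+j_2|\lesssim \mu_1(j)$ is obvious. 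The tail is controlled by $\mu_3(j)^{2s}\leq \mu_1(j)^{2s-1}\mu_3(j)$ using $\mu_3(j)\leq \mu_1(j)$. Altogether $|\Lambda(\sigma,j)|\lesssim_s \mu_1(j)^{2s-1}\mu_3(j)$, and inserting this into the formula for $(Q_p^{(-1)})_{\sigma,j}$ and combining with $\mu_3(j)\geq 1$ produces exactly \eqref{coeffQp} with $\alpha=1$.

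The only genuinely nontrivial step is the one-derivative gain in the last paragraph: it relies on the precise interplay between the support condition defining $G_p^{(-1)}$ (which forces $\sigma_1\sigma_2=-1$ at the two top indices) and the zero momentum condition (which then controls $|j_1-j_2|$ by $\mu_3(j)$). Everything else is bookkeeping.
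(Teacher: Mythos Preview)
Your argument is correct and follows exactly the paper's approach: identify $(Q_p^{(\eta)})_{\sigma,j}=\ii\,\Lambda(\sigma,j)\,(G_p^{(\eta)})_{\sigma,j}$, then bound $|\Lambda|\le p\,\mu_1^{2s}$ in the $(+1)$ case and use the cancellation $\langle j_1\rangle^{2s}-\langle j_2\rangle^{2s}$ together with the zero momentum condition to gain one power of $\mu_1$ in the $(-1)$ case. The paper's proof simply records the resulting inequality $|\langle j_i\rangle^{2s}+\eta\langle j_k\rangle^{2s}|\lesssim_s \mu_1^{2s-1}\mu_3$ (for $\eta=-1$) without spelling out the mean-value/polarization step that you make explicit.
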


\begin{proof}
Using formul\ae \, \eqref{HamNN},  \eqref{Poissonbrackets}, \eqref{HamG}
and recalling Def. \ref{def:adZ2}
we have that the Hamiltonian $\{N_s,G_{p}^{(\eta)}\}$ has coefficients
\[
(Q_{p}^{(\eta)})_{\sigma,j}=(G_{p}^{(\eta)})_{\sigma,j}
\ii \Big(\sum_{i=1}^{p}\s_i\langle j_i\rangle^{2s}\Big)
\]
for any $\s\in\{-1,+1\}^p$, $j\in (\mathbb{Z}^{d})^{p}$ satisfying 
\[
\sum_{i=1}^p\sigma_i j_i=0\,,\quad  \sigma_i \sigma_k=\eta\,,\quad
\mu_1(j)=|j_i|\,, \;\mu_2(j)=|j_k|\,,
\]
for some $i,k=1,\ldots,p$.
Then the bound \eqref{coeffQp} follows by the fact that
\[
|\langle j_i\rangle^{2s}+\eta\langle j_k\rangle^{2s}|\lesssim_{s}
\left\{\begin{aligned}
&\mu_1(j)^{2s-1}\mu_{3}(j)\;\;\;{\rm if} \; \eta=-1\\
&\mu_1(j)^{2s}\qquad\;\;\;\;\;\;\;{\rm if} \; \eta=+1\,.
\end{aligned}\right.
\]
and using the assumption \eqref{coeffGp222}.
\end{proof}

\begin{proof}[{\bf  Proof of Proposition \ref{modifiedstep}}]
\emph{Case $d=2,3$.}
Consider the Hamiltonians $K_{k}$ in \eqref{K6} for 
$k=M_{d,n},\ldots, \widetilde{M}_{n,d}-1$ 
where
$ \widetilde{M}_{n,d}$  is defined in \eqref{def:Mndtilde}.
Recalling Definition \ref{def:adZ2} 
we set
$E_{k}:=E_{k}^{(+1)}+E_{k}^{(-1)}$,
where 
\begin{equation}\label{def:EEk}
E_{k}^{(+1)}:=({\rm ad}_{Z_2})^{-1}\{N_{s}, K_{k}^{(+1)}\}\,,
\qquad E_{k}^{(-1)}:=({\rm ad}_{Z_2})^{-1} \{N_{s}, K_{k}^{(-1,\leq N_1)}\}\,,
\end{equation}
for $k=M_{d,n},\ldots, \widetilde{M}_{d,n}-1$. 
It is easy to note that   $E_{k}\in \mathcal{L}_{k}$.
Moreover, using the bounds on the coefficients $(K_{k})_{\s,j}$ in \eqref{K6}
and 
Proposition \ref{mainNR}
(with $\delta$ therein possibly smaller than the one fixed here), 
one can check that the coefficients 
$(E_{k})_{\s,j}$ satisfy the \eqref{coeffE6}.
Using  \eqref{def:EEk} we notice that
\begin{equation}\label{omoomoeq}
\{N_{s}, K_{k}\}+\{E_{k},Z_2\}=\{N_{s}, K_{k}^{(-1,>N_1)}\}\,,\quad 
k=M_{d,n},\ldots, \widetilde{M}_{d,n}-1\,.
\end{equation}
Combining Lemmata \ref{lem:hamvec} and 
\ref{lem:energia} we deduce 
\begin{equation}\label{stimasmooth}
|{\{N_{s}, K_{k}^{(-1,>N_1)}\}}(u)|\lesssim_{s,\delta}
N_1^{-1+\delta}\|u\|^{k}_{H^{s}}\,,
\end{equation}
for $s$ large enough with respect to $\beta$.
We define the energy $E$ as in \eqref{EEE}.
We are now in position to prove the estimate \eqref{energyestimate}.

\noindent Using the expansions  \eqref{HamTau1} and \eqref{EEE} we get
\begin{align}
\{N_{s}+E,H\circ\tau^{(1)}\}&=
\{N_{s},Z_2+ 
\sum_{k=n}^{M_{d,n}-1}Z_{k}^{\leq N}\}\label{energia1}\\
&+\{N_{s},K^{>N}\}+\{N_{s},\tilde{R}_{r}\}\label{energia2}\\
&+\sum_{k=M_{d,n}}^{\widetilde{M}_{d,n}-1}\Big( \{N_{s}, K_{k}\}+\{E_{k},Z_2\}\Big)
\label{energia3}\\
&+\{E,\sum_{k=n}^{M_{d,n}-1}Z_{k}^{\leq N}\}
+\{E,\sum_{k=M_{d,n}}^{r-1}K_{k}+\tilde{R}_r\}\label{energia4}\\
&+\{E, K^{>N}\}.
\label{energia5}
\end{align}
We study each summand separately.
First of all note that, by item $(i)$ in Theorem \ref{BNFstep} and Proposition
\ref{mainNR} we deduce 
that the right hand side of \eqref{energia1}  vanishes.
Consider now the term in \eqref{energia2}. Using the bounds
\eqref{KmagNN}, \eqref{R8tilde} and recalling \eqref{Poissonbrackets} 
one can check that, for $\eps_0N^{\delta}\lesssim_{s,\delta}1$,
\begin{equation}\label{stima75}
|\eqref{energia2}|\lesssim_{s,\delta} N^{-1+\delta}\|u\|_{H^{s}}^{n}+
N^{\delta}\|u\|_{H^{s}}^{r}\,.
\end{equation}
By \eqref{omoomoeq} and \eqref{stimasmooth} 
we deduce that
\begin{equation}\label{stima76}
|\eqref{energia3}|\lesssim_{s,\delta}N_1^{-1+\delta}\|u\|^{M_{d,n}}_{H^{s}}\,.
\end{equation}
By \eqref{coeffE6}, \eqref{HamTau1}-\eqref{R8tilde}, Lemma 
\ref{lem:hamvec} (recall also \eqref{def:Mndtilde}) we get
\[
\begin{aligned}
|\eqref{energia4}|
&\lesssim_{s,\delta} 
N_1^{\kappa_{d}}N^{\delta}(\|u\|_{H^{s}}^{\widetilde{M}_{d,n}}+\|u\|_{H^{s}}^{r})\,,\\
|\eqref{energia5}|&\lesssim_{s,\delta}
N_1^{\kappa_{d}}N^{-1+\delta}\|u\|_{H^{s}}^{{M}_{d,n}+n-2}\,.
\end{aligned}
\]
The discussion above implies the bound \eqref{energyestimate}
using that $r\geq \widetilde{M}_{d,n}$. 
This concludes the proof in the case $d=2,3$.

\vspace{0.2em}
\noindent
\emph{Case $d\geq4$.} In this case we consider the Hamiltonian $H$
in \eqref{expinitial}. Recalling Definition \ref{def:adZ2} 
we set
\[
E_{k}:=E_{k}^{(+1)}+E_{k}^{(-1)}
\]
where 
\begin{equation}\label{def:EEkd4}
E_{k}^{(+1)}:=({\rm ad}_{Z_2})^{-1}\{N_{s}, H_{k}\}^{(+1)}\,,
\qquad E_{k}^{(-1)}:=({\rm ad}_{Z_2})^{-1} \{N_{s}, H_{k}^{(-1,\leq N_1)}\}\,,
\end{equation}
for $k=M_{d,n},\ldots, \widetilde{M}_{d,n}-1$. 
Notice that the energies $E_{k}^{(+1)}$, $E_{k}^{(-1)}$ are in $\mathcal{L}_{k}$
with coefficients
\[
(E_{k}^{(+1)})_{\s,j}=\big(\sum_{i=1}^{k}\s_i\langle j_i\rangle^{2s}\big)
\big(\sum_{i=1}^{k}\s_i\omega_{j_i}\big)^{-1}(H_{k}^{(+1)})_{\s,j}\,,
\quad \s\in\{-1,+1\}^{k}\,,\;\; j\in(\Z^{d})^{k}\,,
\]  
and
\[
(E_{k}^{(-1)})_{\s,j}=\big(\sum_{i=1}^{k}\s_i\langle j_i\rangle^{2s}\big)
\big(\sum_{i=1}^{k}\s_i\omega_{j_i}\big)^{-1}(H_{k}^{(-1)})_{\s,j}\,,
\;\;\; \mu_{2}(j)\leq N_1\,,
\]  
with $\s\in\{-1,+1\}^{k}$, $j\in(\Z^{d})^{k}$.
Using Proposition \ref{mainNR} and reasoning as in the proof of Lemma 
\ref{lem:energia} one can check that estimate  \eqref{coeffE6} on the coefficients of $E_{k}^{(+1)}$ and $E_{k}^{(-1)}$
holds true with $\kappa_{d}$ as in \eqref{def:mud}.
Equation \eqref{def:EEkd4} implies
\begin{equation}\label{omoomoeqD4}
\{N_{s}, H_{k}\}+\{E_{k},Z_2\}=\{N_{s}, H_{k}^{(-1,>N_1)}\}\,,\quad 
k=M_{d,n},\ldots, \widetilde{M}_{d,n}-1\,.
\end{equation}
Recall that the coefficients of the Hamiltonian $H_{k}$ 
satisfy the bound \eqref{initCoeff}. Therefore, 
combining Lemmata 
\ref{lem:energia} 
and \ref{lem:hamvec},
we deduce 
\begin{equation}\label{stimasmoothD4}
|\{N_{s}, H_{k}^{(-1,>N_1)}\}(u)|
\lesssim_{s,\delta}N_1^{-3}\|u\|^{k}_{H^{s}}\,,
\end{equation}
for $s$ large enough with respect to $\beta$.
Recalling \eqref{expinitial} we have
\begin{align*}
\{N_{s}+E,H\}&=
\{N_{s},Z_2\}+\{N_{s},{R}_{r}\}
+\{E,\sum_{k=M_{d,n}}^{r-1}H_{k}+{R}_{r}\}
\\
&+\sum_{k=M_{d,n}}^{\widetilde{M}_{d,n}-1}\Big( \{N_{s}, K_{k}\}+\{E_{k},Z_2\}\Big)\,.
\end{align*}
One can obtain the bound \eqref{energyestimate} 
by reasoning as in the case $d=2,3$,
using \eqref{stimasmoothD4}, \eqref{R8}
and recalling that $\widetilde{M}_{d,n}=M_{d,n}+n-2$ 
(see \eqref{def:Mndtilde})
when $d\geq4$. This concludes the proof.
\end{proof}

\section{Proof of Theorem \ref{thm-main}}\label{section:boot}
In this section we show how to combine the results of Theorem \ref{BNFstep}
and Proposition \ref{modifiedstep}
in order to prove Theorem \ref{thm-main}.

\noindent
Consider $\psi_0$ and $\psi_1$ satisfying \eqref{initialstima}
and let $\psi(t,y)$, $y\in \mathbb{T}_{\nu}^{d}$, 
be the unique solution of \eqref{eq:beam}
with initial conditions $(\psi_0,\psi_1)$ defined for $t\in[0,T]$
for some $T>0$. By rescaling the space variable $y$ and
passing to the complex variable in \eqref{beam5} 
we consider the function $u(t,x)$, $x\in\mathbb{T}^{d} $ 
solving  the equation \eqref{eq:beamComp}.
We recall that \eqref{eq:beamComp} can be written in the Hamiltonian form
\begin{equation}\label{starwars}
\pa_{t}u=\ii \partial_{\bar{u}} H(u)\,,
\end{equation}
where $H$ is the Hamiltonian function in \eqref{beamHam} (see also \eqref{expinitial}).
We have that Theorem \ref{thm-main} is a consequence of
the following Lemma.

\begin{lemma}{\bf (Main bootstrap)}\label{main:boot}
There exists $s_0=s_0(n,d)$ such that for any $\delta>0$, $s\geq s_0$, there exists $\eps_0=\eps_0(\delta,s)$ such that the following holds.
Let $u(t,x)$ be a solution of \eqref{starwars} with 
$t\in [0,T)$, $T>0$ and initial condition
$u(0,x)=u_0(x)\in H^{s}(\mathbb{T}^{d})$. For any $\eps\in (0, \eps_0)$
if
\begin{equation}\label{hypBoot}
\|u_0\|_{H^{s}}\leq \eps\,,\quad \sup_{t\in[0,T)}\|u(t)\|_{H^{s}}\leq 2\eps\,,
\quad T\leq \eps^{-\mathtt{a}+\delta}\,,
\end{equation}
with $\mathtt{a}=\mathtt{a}(d,n)$ 
in \eqref{Atime},
then we have the improved bound 
$\sup_{t\in[0,T)}\|u(t)\|_{H^{s}}\leq \frac32 \eps$\,.
\end{lemma}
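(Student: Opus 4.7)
The plan is a standard bootstrap driven by the modified energy of Proposition~\ref{modifiedstep}: pass to the Birkhoff-normalized variable (when $d=2,3$), build an energy equivalent to $N_s(\cdot)$, integrate its Poisson bracket with the (transformed) Hamiltonian along the flow, and optimize the truncation parameters $N, N_1$ as powers of $\eps^{-1}$.

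First, for $d=2,3$, I set $v(t):=\tau^{(0)}(u(t))$; by \eqref{tau} together with the bootstrap hypothesis $\|u(t)\|_{H^s}\leq 2\eps$, we have $\|v(t)-u(t)\|_{H^s}\lesssim_{s,\delta} N^{\delta}\eps^{2}$, so $\|v\|_{H^s}$ and $\|u\|_{H^s}$ are comparable as soon as $N^{\delta}\eps\ll 1$, and $v$ solves the Hamiltonian system generated by $H\circ\tau^{(1)}$. When $d\geq 4$ no Birkhoff step is required and I put $v:=u$, the relevant Hamiltonian being $H$ itself. I then define $\mathcal{E}(v):=N_s(v)+E(v)$ with $E$ from Proposition~\ref{modifiedstep}. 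The bound \eqref{coeffE6}, together with the Sobolev argument from the proof of Lemma~\ref{lem:hamvec}(i) (interpreting the $\mu_1(j)^{2s}$ weight as an $H^{s}$-factor on the largest frequency index), yields
\[
|E(v)|\lesssim_{s,\delta} N^{\delta}\,N_1^{\kappa_d}\,\|v\|_{H^s}^{M_{d,n}}\,,
\]
so that $\tfrac{3}{4}\|v\|_{H^s}^{2}\leq \mathcal{E}(v)\leq \tfrac{5}{4}\|v\|_{H^s}^{2}$ provided $N^{\delta}N_1^{\kappa_d}\eps^{M_{d,n}-2}$ is small, a condition guaranteed by our eventual choice of $N,N_1$ and $\eps_0$.

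Next, integrating the identity $\partial_t\mathcal{E}(v)=\{N_s+E,H\circ\tau^{(1)}\}(v)$ and applying \eqref{energyestimate} with $\|v(t')\|_{H^s}\lesssim \eps$, I obtain
\[
\mathcal{E}(v(t))-\mathcal{E}(v(0))\lesssim_{s,\delta} T\Big(N_1^{\kappa_d}N^{\delta}\eps^{\widetilde M_{d,n}}+N_1^{\kappa_d}N^{-1+\delta}\eps^{M_{d,n}+n-2}+N_1^{-\mathfrak{s}_d+\delta}\eps^{M_{d,n}}+N^{-\mathfrak{s}_d+\delta}\eps^{n}\Big)
\]
for every $t<T\leq \eps^{-\mathtt{a}+\delta}$. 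I then set $N_1=\eps^{-q}$ and $N=\eps^{-p}$, the optimal value of $q$ being the one that equalizes the first and third terms, namely $q=(\widetilde M_{d,n}-M_{d,n})/(\kappa_d+\mathfrak{s}_d)$, and $p$ chosen much larger than $\mathtt{a}/\mathfrak{s}_d$ so that the remaining two terms are strictly subcritical. A direct inspection of the exponents, using \eqref{def:Mnd}, \eqref{def:Mndtilde}, \eqref{def:mud}, and \eqref{def:fraks}, shows that the whole right-hand side is $O(\eps^{2+\delta'})$ with $\delta'>0$ precisely when $\mathtt{a}=\mathtt{a}(d,n)$ as defined in \eqref{Atime}.

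Combining with $\mathcal{E}(v(0))\leq \tfrac{5}{4}(1+o(1))\eps^{2}$ and the norm equivalence $\mathcal{E}\sim N_s$, I deduce $\|v(t)\|_{H^s}^{2}\leq \tfrac{5}{3}\eps^{2}(1+o(1))$, and returning to $u$ via $\|u-v\|_{H^s}\lesssim N^{\delta}\eps^{2}$ yields the improved bound $\|u(t)\|_{H^s}\leq \tfrac{3}{2}\eps$ for $\eps_0$ small enough. The main difficulty I expect is the exponent bookkeeping: one must simultaneously check that the polynomial choices of $N,N_1$ remain compatible with the smallness assumption $\eps_0\lesssim N^{-\delta}$ of Theorem~\ref{BNFstep} and Proposition~\ref{modifiedstep} (for which it suffices to take $\delta$ small compared to $1/p$), and that the balance $q=(\widetilde M_{d,n}-M_{d,n})/(\kappa_d+\mathfrak{s}_d)$ does reproduce the formula \eqref{Atime} --- a short but delicate case analysis in $(d,n)$, which is exactly why the definitions \eqref{def:Mnd} and \eqref{def:Mndtilde} are tuned as they are.
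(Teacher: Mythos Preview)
Your proposal is correct and follows essentially the same route as the paper: pass to the Birkhoff-normalized variable $z=\tau^{(0)}(u)$ (the paper packages the norm equivalence you sketch into a separate Lemma~\ref{equivNorms}), integrate the modified-energy estimate \eqref{energyestimate}, and optimize $N_1=\eps^{-q}$, $N=\eps^{-p}$ with your $q=(\widetilde M_{d,n}-M_{d,n})/(\kappa_d+\mathfrak{s}_d)$ matching the paper's $\alpha$ in \eqref{alpha2}. The only cosmetic difference is that the paper pins down an explicit lower bound \eqref{gamma1} for the exponent of $N$ rather than saying ``much larger than $\mathtt{a}/\mathfrak{s}_d$'', and then absorbs the $N^{\delta}$ losses by rescaling $\delta$ at the end --- exactly the manoeuvre you flag in your last paragraph.
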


In order to prove Lemma \ref{main:boot} we first need a preliminary result.
\begin{lemma}{\bf (Equivalence of the energy norm)}\label{equivNorms}
Let $\delta>0$, $N\geq N_1\geq 1$.
Let $u(t,x)$ as in \eqref{hypBoot} with $s\gg1$ large enough. 
 Then, for any $0<c_0<1$,  there exists $C=C(\delta,s,d,n,c_0)>0$ such that, if
 we have the smallness condition
\begin{equation}\label{smalleps}
\eps C N^{\delta}N_1^{\kappa_d}\leq 1\,,
\end{equation}
the following holds true. Define
\begin{equation}\label{def:calE}
z:=\tau^{(0)}(u)\,,\quad u=\tau^{(1)}(z)\,,
\quad \mathcal{E}_{s}(z):=(N_{s}+E)(z)
\end{equation}
where $\tau^{(\s)}$, $\s=0,1$, are the maps given by Theorem \ref{BNFstep}
and $N_{s}$ is in \eqref{HamNN}, $E$ is given by Proposition \ref{modifiedstep}.
We have
\begin{equation}\label{equivZZ}
1/(1+c_0)\|z\|_{H^{s}}\leq \|u\|_{H^{s}}\leq (1+c_0)\|z\|_{H^{s}}\,,\quad\forall t\in[0,T]\,;
\end{equation}
\begin{equation}\label{equivEEE}
1/(1+12c_0)\mathcal{E}_{s}(z)\leq \|u\|^{2}_{H^{s}}
\leq (1+12c_0)\mathcal{E}_{s}(z)\,,\quad\forall t\in[0,T]\,.
\end{equation}
\end{lemma}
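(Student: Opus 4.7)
The plan is to deduce both inequalities from the quantitative estimates \eqref{tau} on the Birkhoff transformations together with a direct bound on the modified correction $E(z)$. Throughout, $C$ will denote a constant depending on $s,d,n,\delta$ that may change from line to line; the smallness hypothesis \eqref{smalleps} (with a sufficiently large implicit constant) will be repeatedly used to absorb the factors $N^\delta$ and $N_1^{\kappa_d}$.

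First I would prove \eqref{equivZZ}. By Theorem \ref{BNFstep}, on $B_s(0,2\eps_0)$ the map $\tau^{(1)}$ is symplectic and satisfies $\|\tau^{(1)}(z)-z\|_{H^s}\lesssim_{s,\delta} N^{\delta}\|z\|_{H^s}^{2}$; similarly for $\tau^{(0)}$ applied to $u$. Writing $u=\tau^{(1)}(z)$ and using the bootstrap assumption \eqref{hypBoot}, I get $\|u-z\|_{H^s}\le C N^{\delta}\|z\|_{H^s}\,\|z\|_{H^s}$, and \eqref{equivZZ} (from the $H^s(\T^d)$ norm of $z$ controlled by that of $u$ through $\tau^{(0)}$) together with the smallness \eqref{smalleps} gives $\|u-z\|_{H^s}\le c_0\|z\|_{H^s}$, hence $\|u\|_{H^s}\le(1+c_0)\|z\|_{H^s}$. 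The reverse inequality follows symmetrically from $z=\tau^{(0)}(u)$.

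The core step is then to show that $|E(z)|\le 10\,c_0\,\|z\|_{H^s}^{2}$ (a factor $12$ being safely generous in the final comparison), and the main obstacle here is the unusual $\mu_1(j)^{2s}$ factor in the coefficient bound \eqref{coeffE6}. The idea is to use the zero momentum condition $\sum_i\sigma_i j_i=0$, which forces $|j_1|\le(k-1)|j_2|$, hence $\mu_1(j)^{2s}\lesssim_{k,s}\mu_1(j)^{s}\mu_2(j)^{s}=\langle j_1\rangle^{s}\langle j_2\rangle^{s}$. Inserting this into the multilinear expansion \eqref{HamG} of $E_k$ and arguing as in the proof of \eqref{est:energy-form}, the two factors $\langle j_1\rangle^{s}\langle j_2\rangle^{s}$ assemble into two $H^s$ norms while the remaining factor $\mu_3(j)^{\beta}$ is absorbed by one Sobolev norm of index $d/2+\beta+$ (the others needing only $d/2+$), yielding
\begin{equation*}
|E_k(z)|\lesssim_{s,\delta} N^{\delta}N_1^{\kappa_d}\,\|z\|_{H^s}^{2}\,\|z\|_{H^{s_0}}^{k-2}\qquad k=M_{d,n},\dots,\widetilde{M}_{d,n}-1,
\end{equation*}
for $s\ge s_0(\beta)$ large enough. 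Summing over $k$, using $M_{d,n}\ge 3$ and $\|z\|_{H^{s_0}}\le\|z\|_{H^s}\lesssim\eps$ from \eqref{equivZZ} and \eqref{hypBoot}, produces
\begin{equation*}
|E(z)|\lesssim_{s,\delta} N^{\delta}N_1^{\kappa_d}\,\eps\,\|z\|_{H^s}^{2},
\end{equation*}
and \eqref{smalleps} with a sufficiently large implicit constant gives the desired $|E(z)|\le c_0\|z\|_{H^s}^{2}$.

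Finally, combining the two bounds yields \eqref{equivEEE}: since $\mathcal{E}_s(z)=\|z\|_{H^s}^{2}+E(z)$ with $|E(z)|\le c_0\|z\|_{H^s}^{2}$, we have $(1-c_0)\|z\|_{H^s}^{2}\le \mathcal{E}_s(z)\le(1+c_0)\|z\|_{H^s}^{2}$; squaring \eqref{equivZZ} gives $(1+c_0)^{-2}\|z\|_{H^s}^{2}\le\|u\|_{H^s}^{2}\le(1+c_0)^{2}\|z\|_{H^s}^{2}$, so
\begin{equation*}
\frac{1}{(1+c_0)^{2}(1-c_0)^{-1}}\,\mathcal{E}_s(z)\le\|u\|_{H^s}^{2}\le\frac{(1+c_0)^{2}}{1-c_0}\,\mathcal{E}_s(z).
\end{equation*}
For $c_0<1$ a direct Taylor expansion shows $(1+c_0)^{2}(1-c_0)^{-1}\le 1+12c_0$, which closes \eqref{equivEEE}. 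The delicate point is really the treatment of the $\mu_1(j)^{2s}$ weight via momentum conservation; everything else is an application of Theorem \ref{BNFstep}, the coefficient estimate \eqref{coeffE6} of Proposition \ref{modifiedstep}, and the multilinear bounds of Lemma \ref{lem:hamvec}.
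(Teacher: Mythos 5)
Your proposal follows essentially the same route as the paper: prove \eqref{equivZZ} directly from the closeness-to-identity bound \eqref{tau} on $\tau^{(0)},\tau^{(1)}$, then show $|E(z)|$ is a small fraction of $\|z\|_{H^s}^2$ using \eqref{coeffE6}, and combine. One point where you are actually more careful than the paper deserves highlighting: the paper estimates $E(z)$ by citing Lemma \ref{lem:hamvec}, but the coefficients of $E_k$ grow like $\mu_1(j)^{2s}$, i.e.\ the hypothesis \eqref{coeffGp} holds with $q=-2s<0$, and the paper's proof of \eqref{est:energy-form} is written out only for $q=0$ (the reduction ``it is sufficient to give the proof in the case $q=0$'' is valid only for $q\geq 0$). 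Your observation — that the zero momentum condition forces $\mu_1(j)\lesssim_k\mu_2(j)$, so $\mu_1(j)^{2s}\lesssim\langle j_1\rangle^s\langle j_2\rangle^s$ and the two $s$-weights can be distributed onto two $H^s$ factors (via Cauchy–Schwarz in the largest two indices, the remaining factors going in $\ell^1$-weighted spaces) — is precisely the mechanism that makes the estimate legitimate, and fills in a step the paper leaves implicit. This gives $|E(z)|\lesssim_{s,\delta}N^\delta N_1^{\kappa_d}\|z\|_{H^s}^2\|z\|_{H^{s_0}}^{M_{d,n}-2}$ with $s_0=s_0(\beta)$, which, together with $\|z\|_{H^{s_0}}\lesssim\eps$ and \eqref{smalleps}, yields $|E(z)|\leq\eta\,\|z\|_{H^s}^2$ with $\eta$ as small as you like by enlarging $C$.

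There is, however, one small numerical slip at the very end. You stop at $|E(z)|\leq c_0\|z\|_{H^s}^2$ and then assert that $(1+c_0)^2(1-c_0)^{-1}\leq 1+12c_0$ for all $0<c_0<1$. That inequality is equivalent to $13c_0^2-9c_0\leq 0$, i.e.\ $c_0\leq 9/13$, and it fails for $c_0$ close to $1$ (e.g.\ at $c_0=0.9$ the left side is about $36.1$ while the right is $11.8$). Since the lemma is stated for every $c_0\in(0,1)$, the proof as written does not close. The fix is trivial and already available to you: instead of stopping at $\eta=c_0$, choose $C=C(\delta,s,d,n,c_0)$ in \eqref{smalleps} large enough that $|E(z)|\leq \frac{c_0}{1+c_0}\|z\|_{H^s}^2$. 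Then $(1-\frac{c_0}{1+c_0})^{-1}=1+c_0$, so $\|u\|_{H^s}^2\leq(1+c_0)^2\|z\|_{H^s}^2\leq(1+c_0)^3\mathcal{E}_s(z)$, and $(1+c_0)^3\leq 1+12c_0$ does hold on all of $(0,1)$ (since $(1+c_0)^3-(1+12c_0)=c_0(c_0^2+3c_0-9)<0$ there). This is also what the paper does, in effect, by absorbing the $E$-contribution into $\|u\|_{H^s}^2$ with the constant $C$ chosen at the end depending on $c_0$.
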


\begin{proof}
Thanks to \eqref{smalleps} we have that Theorem 
\ref{BNFstep} and Proposition \ref{modifiedstep} apply.
Consider the function $z=\tau^{(0)}(u)$. By estimate \eqref{tau}
we have
\[
\|z\|_{H^{s}}\leq \|u\|_{H^{s}}+\tilde{C}N^{\delta} \|u\|_{{H}^s}^2
\stackrel{(\ref{smalleps})}{\leq}\|u\|_{H^{s}}(1+c_0)\,,
\]
where $\tilde{C}$ is some constant depending on $s$ and $\delta$. 
The latter inequality  follows
by taking $C$ in \eqref{smalleps} large enough. 
Reasoning similarly and using the bound
\eqref{tau} on $\tau^{(1)}$ one gets the \eqref{equivZZ}.
Let us check the \eqref{equivEEE}. First notice that, by 
\eqref{coeffE6}, \eqref{EEE}
and Lemma \ref{lem:hamvec},
\begin{equation}\label{stimaE(z)}
|E(z)|\leq \tilde{C} \|z\|_{H^{s}}^{M_{d,n}}N^{\delta}N_1^{\kappa_{d}}\,,
\end{equation}
for some $\tilde{C}>0$ depending on $s$ and $\delta$.
Then, recalling \eqref{def:calE}, we get
\[
|\mathcal{E}_{s}(z)|\leq \|z\|^{2}_{H^{s}}
(1+ \tilde{C}\|z\|_{H^{s}}^{M_{d,n}-2}N^{\delta}N_1^{\kappa_{d}})
\stackrel{(\ref{equivZZ}), (\ref{smalleps})}{\leq }\|u\|_{H^{s}}^{2}(1+c_0)^{3}\,,
\]
where we used that $M_{d,n}-2\geq1$. 
This implies the first inequality in \eqref{equivEEE}.
On the other hand, using \eqref{equivZZ}, \eqref{stimaE(z)} 
and \eqref{hypBoot},  we have
\[
\begin{aligned}
\|u\|_{H^{s}}^{2}\leq (1+c_0)^{2}\mathcal{E}_{s}(z)
+(1+c_0)^{M_{d,n}+{2}}\tilde{C}
N^{\delta}N_1^{\kappa_{d}}\eps^{M_{d,n}-2}\|u\|_{H^{s}}^{2}\,.
\end{aligned}
\]
Then, since $M_{d,n}>2$ (see \eqref{def:Mnd}), 
taking $C$ in  \eqref{smalleps} large enough 
we obtain the second inequality in \eqref{equivEEE}.
\end{proof}

\begin{proof}[{\bf Proof of Lemma \ref{main:boot}}]
Assume the \eqref{hypBoot}. We study how the Sobolev norm
$\|u(t)\|_{H^{s}}$ evolves for $t\in[0,T]$ by inspecting  the equivalent  
energy norm $\mathcal{E}_{s}(z)$ defined in \eqref{def:calE}.
Notice that
\[
\pa_{t}\mathcal{E}_{s}(z)=-\{\mathcal{E}_{s}, H\circ\tau^{(1)}\}(z)\,.
\]
Therefore, for any $t\in [0,T]$, we have that
\[
\begin{aligned}
\left|\int_{0}^{T}\pa_{t}\mathcal{E}_{s}(z)\ \mathrm{d}t\right|
&\stackrel{\eqref{energyestimate},\eqref{hypBoot}}{\lesssim_{s,\delta}}
TN_1^{\kappa_d} N^{\delta}
\big(\eps^{\widetilde{M}_{d,n}}
+N^{-1}\eps^{M_{d,n}+n-2}\big)
\\&\quad
+TN_1^{-\mathfrak{s}_d+\delta}\eps^{M_{d,n}}
+TN^{-\mathfrak{s}_d+\delta}\eps^{n}\,.
\end{aligned}
\]
We now 
fix 
\[
N_1:=\eps^{-\alpha}\,,\quad N:=\eps^{-\gamma}\,,
\]
with $0<\alpha\leq \gamma$ to be chosen properly. Hence we 
have
\begin{align}
\left|\int_{0}^{T}\pa_{t}\mathcal{E}_{s}(z)\ \mathrm{d}t\right|&\lesssim_{s,\delta}
\eps^{2}T\Big(
\eps^{M_{d,n}-2+\alpha \mathfrak{s}_{d}-\delta\alpha}
+\eps^{\widetilde{M}_{n,d}-2-\alpha\kappa_{d}-\delta\gamma}
\Big)\label{moon1}\\
&+
\eps^{2}T\Big(
\eps^{n-2+\gamma \mathfrak{s}_{d}-\delta\gamma}
+\eps^{M_{n,d}+n-4+\gamma-\alpha\kappa_{d}-\delta\gamma}
\Big)\,.\label{moon2}
\end{align}
We choose $\alpha>0$ such that
\begin{equation}\label{alpha1}
M_{d,n}-2+\alpha \mathfrak{s}_{d}=\widetilde{M}_{n,d}-2-\alpha\kappa_{d}\,,
\end{equation}
i.e.
\begin{equation}\label{alpha2}
\alpha:=
\frac{\widetilde{M}_{n,d}-M_{d,n}}{\mathfrak{s}_d+\kappa_d}
\stackrel{\eqref{def:Mndtilde}, \eqref{def:fraks}, \eqref{def:mud}}{=}
\left\{\begin{aligned}
&\tfrac{n-1}{d-1}\;\;\;{\rm if} \; n \;\;{\rm odd}\\
&\tfrac{n-2}{d-1}\;\;\;{\rm if} \; n \;\;{\rm even}\,.
\end{aligned}\right.
\end{equation}
We shall choose $\gamma>0$ is such a way 
the terms in \eqref{moon2} are negligible with respect to the terms in 
\eqref{moon1}. In particular we set (recall \eqref{alpha2})
\begin{equation}\label{gamma1}
\gamma\geq \max\big\{M_{d,n}-4-n
+\frac{\widetilde{M}_{d,n}-M_{d,n}}{\mathfrak{s}_d+\kappa_d}\mathfrak{s}_d,
2-n+\widetilde{M}_{d,n}-M_{d,n}
\big\}\,.
\end{equation}
Therefore estimates \eqref{moon1}-\eqref{moon2}
become 
\[
\left|\int_{0}^{T}\pa_{t}\mathcal{E}_{s}(z)\ \mathrm{d}t\right|\lesssim_{s,\delta}
\eps^{2}T\eps^{\mathtt{a}}(
\eps^{-\delta\alpha}
+\eps^{-\delta\gamma})
\]
where $\mathtt{a}$ is defined in \eqref{Atime}
and appears thanks to definitions
 \eqref{def:Mnd}, \eqref{def:Mndtilde}, \eqref{def:mud}, \eqref{def:fraks} 
and \eqref{alpha2}.
Moreover we define
\[
\delta':=2\delta\max\{\alpha,\gamma\}\,,
\]
with $\alpha,\gamma$ given in \eqref{alpha2} and \eqref{gamma1}. 
Notice that, since
$\delta>0$ is arbitrary small, 
then $\delta'$ can be chosen arbitrary small.
Since $\eps$ can be chosen arbitrarily small with respect to $s$ and $\delta$, 
with this choices we get
\[
\left|\int_{0}^{T}\pa_{t}\mathcal{E}_{s}(z)\ \mathrm{d}t\right|
\leq\eps^{2}/4
\]
as long as $T\leq \eps^{-\mathtt{a}+\delta'}$.
Then, using the equivalence of norms  \eqref{equivEEE} and choosing 
$c_0>0$ small enough, we have
\[
\begin{aligned}
\|u(t)\|_{H^{s}}^{2}&\leq (1+12c_0)\mathcal{E}_0(z(t))
\\&
\leq (1+12c_0)\Big[\mathcal{E}_s(z(0))
+\left|\int_{0}^{T}\pa_{t}\mathcal{E}_{s}(z)\ \mathrm{d}t\right|\Big]
\\&
\leq (1+12c_0)^{2}\eps^{2}+(1+12c_0)\eps^{2}/4\leq \eps^{2}3/2\,,
\end{aligned}
\]
for times $T\leq \eps^{-\mathtt{a}+\delta'}$.
This implies the thesis.  
\end{proof}

\appendix

\def\cprime{$'$}

\end{document}